\newtheorem{theorem}{Theorem}
\newtheorem{proposition}{Proposition}[section]
\newtheorem{corollary}[proposition]{Corollary}
\newtheorem{definition}[proposition]{Definition}
\newtheorem{lemma}[proposition]{Lemma}
\newtheorem{remark}[proposition]{Remark}
\newcommand{\diver}{\operatorname{div}}
\newcommand{\dd}{\mathrm{d}}
\numberwithin{equation}{section}
\begin{document}
	\title[Pseudo-relativistic Hartree equation]{Remarks about a generalized pseudo-relativistic Hartree equation}
	
	\author{H. Bueno}
	\address{H. Bueno and G. A. Pereira- Departmento de Matem\'atica, Universidade Federal de  Minas Gerais, 31270-901 - Belo Horizonte - MG, Brazil}
	\email{hamilton.pb@gmail.com and  gilbertoapereira@yahoo.com.br }
	\author{O. H. Miyagaki}
	\address{O. H. Miyagaki - Departmento de Matem\'atica, Universidade Federal de Juiz de Fora, 36036-330 - Juiz de Fora - MG, Brazil}
	\email{ohmiyagaki@gmail.com}
	\author{G. A. Pereira}

	\subjclass[2010]{35J20, 35Q55, 35B38,  35R11} \keywords{Variational methods, fractional laplacian, Hartree equations}
	\thanks{H. Bueno is the corresponding author; O. H. Miyagaki has received research grants from CNPq/Brazil  304015/2014-8 and INCTMAT/CNPQ/Brazil; G. A. Pereira received research grants by PNPD/CAPES/Brazil}
	\date{}

\begin{abstract}
With appropriate hypotheses on the nonlinearity $f$, we prove the existence of a ground state solution $u$ for the problem
\[(-\Delta+m^2)^\sigma u+Vu=\left(W*F(u)\right)f(u)\ \ \text{in }\ \mathbb{R}^{N},\]
where $0<\sigma<1$, $V$ is a bounded continuous potential and $F$ the primitive of $f$. We also show results about the regularity of any solution of this problem.
\end{abstract}
\maketitle

\section{Introduction}\label{intro}

The study of the generalized pseudo-relativistic Hartree equation
\begin{equation}\label{sqroot}\sqrt{-\Delta+m^2}\, u+Vu=\left(W*F(u)\right)f(u)\ \ \text{in }\ \mathbb{R}^{N},
\end{equation}
where $F(t)=\int_0^t f(s)\dd s$, was carried out in \cite{BBMP} with adequate hypotheses in the case $N\geq 2$. The study of \eqref{sqroot} is based on the pioneering work of  Coti Zelati and Nolasco \cite{ZelatiNolasco} and was generalized by Cingolani and Secchi \cite{Cingolani}. 

The purpose of this paper is to consider the same equation \eqref{sqroot}, substituting the operator $\sqrt{-\Delta+m^2}\, u$ by $(-\Delta+m^2)^\sigma u$, where $0<\sigma<1$. Namely, we consider the equation
\begin{equation}\label{original}(-\Delta+m^2)^\sigma u+Vu=\left(W*F(u)\right)f(u)\ \ \text{in }\ \mathbb{R}^{N},\end{equation}
supposing that the potential $V\colon\mathbb{R}^N\to \mathbb{R}$ is a continuous, (possibly) sign-changing \emph{bounded} function satisfying
\begin{enumerate}
	\item [($V1$)] $V(y)+V_0\geq 0$ for every $y\in\mathbb{R}^{N}$ and some constant $V_0<\min\{1,m^2\}\mathcal{K}(\Phi_\sigma)$, where the constant $\mathcal{K}(\Phi_\sigma)>0$ will be defined later;
	\item [($V2$)] $V_\infty=\displaystyle\lim_{|y|\to\infty}V(y)>0$;
	\item [($V3$)] $V(y)\leq V_\infty$ for all $y\in\mathbb{R}^N$, $V(y)\neq V_\infty$.
\end{enumerate}

Hypotheses like ($V_2$) appear since the work of P.-L Lions \cite{CC1} in the context of positive or sign-changing potentials, see e.g. \cite{Rabinowitz}. 

We assume that the \emph{radial} function $W$ satisfy
\begin{enumerate}
		\item [($W_h$)] $0\leq W=W_1+W_2\in L^r(\mathbb{R}^{N})+L^\infty(\mathbb{R}^{N})$, with $r>\frac{N}{N(2-\theta)+2\sigma\theta}$.
\end{enumerate}

Condition ($W_h$) goes back to Cingolani and Secchi \cite{Cingolani}. In that paper, \eqref{sqroot} is considered with the homogeneous term $(W*|u|^\theta)|u|^{\theta-2}u$ instead of $\left(W*F(u)\right)f(u)$; the possibly sign-changing function $V$ is required to be continuous, with $V(y)\leq V_\infty-e^{k|x|}$ for an appropriate value of the constant $k$ and all $|x|>R$.

We accept that the nonlinearity $f$  is a $C^1$ function that satisfy
\begin{enumerate}
\item [($f1$)] $\displaystyle\lim_{t\to 0}\frac{|f(t)|}{t}=0$;
\item [($f2$)] $\displaystyle\lim_{t\to \infty}\frac{f(t)}{t^{\theta-1}}=0$ for some $\theta$ such that $\max\{2,N/(N-2\sigma)\}<\theta<2^{*}_\sigma=\frac{2N}{N-2\sigma}$;
\item [($f3$)] $\displaystyle\frac{f(t)}{t}$ is increasing for all $t>0$.
\end{enumerate}
Considering $\sigma=1/2$, these hypotheses were also assumed by the authors of \cite{BBMP}.

Condition $(f3)$ implies the Ambrosetti-Rabinowitz condition $2F(t)\leq tf(t)$ if $t>0$, which is satisfied, for example, by
\[f(t)=t\ln(1+t),\]
a function that does not satisfy $\theta F(t)\leq tf(t)$ for any $\theta>2$. 

Concerning the applications of equation \eqref{original}, we recall that fractional Laplacian operators are the infinitesimal generators of Lévy stable diffusion processes. They have application in several areas such as anomalous diffusion of plasmas, probability, finances and populations dynamics, see \cite{Applebaum}. The special case $\sigma=1/2$ conducts to equation \eqref{sqroot}, a pseudo-relativistic Hartree equation, see V. Moroz and  J. Van Schaftingen \cite{Moroz1,Moroz2} and references therein.
 
Our approach studying equations \eqref{original} rests on the seminal papers by Cabré and Solà-Morales \cite{Cabre} and Caffarelli and Silvestre \cite{Caffarelli}. Therefore, we consider the Dirichlet-to-Neumann operator, that is, the extension problem naturally related to \eqref{original} for the operator $(-\Delta+m^2)^\sigma u$. 

We state a general result about the extension problem, just changing the notation:

\textbf{Theorem} (Stinga-Torrea \cite{StingaTorrea1})\textit{ Let $h\in Dom(L^\sigma)$ and $\Omega$ be an open subset of $\mathbb{R}^N$. A solution of the extension problem
	\[\left\{\begin{array}{ll}
	-L_y u+\frac{1-2\sigma}{x}u_x+u_{xx}=0 &\text{in }\ (0,\infty)\times \Omega\\
	u(0,y)=h(y) &\text{on }\ \{x=0\}\times\Omega
	\end{array}\right.\]
	is given by
	\[u(x,y)=\frac{1}{\Gamma(\sigma)}\int_{0}^\infty e^{-tL}(L^\sigma h)(y)e^{-\frac{x^2}{4t}}\frac{\dd t}{t^{1-\sigma}}\]
	and satisfies
	\[\lim_{x\to 0}\frac{x^{1-2\sigma}}{2\sigma}u_x(x,y)=\frac{\Gamma(-\sigma)}{4^\sigma\Gamma(\sigma)}(L^\sigma h)(y).\]}\vspace*{.2cm}

In our case, the extension problem produces, for $(x,y)\in (0,\infty)\times\mathbb{R}^N=\mathbb{R}^{N+1}_+$,
\begin{equation}\left\{\begin{array}{ll}
\displaystyle\Delta_y u+\frac{1-2\sigma}{x}u_x+u_{xx}-m^2u=0 &\text{in }\ \mathbb{R}^{N+1}_+ \\ \\
\displaystyle\lim_{x\to 0^+}\left(-x^{1-2\sigma}\frac{\partial u}{\partial x}\right)= -V(y) u+ \left[W*F(u)\right]f(u), &\text{in } \left\{ 0\right\} \times \mathbb{R}^N \simeq\mathbb{R}^N,
\end{array}\right.\tag{$P$}\label{P}
\end{equation}

However, our approach is not based on the action of the heat semigroup  $e^{tL}$ generated by the operator $L$ acting on $L^\sigma h$, for $h\in Dom(L^\sigma)$. As in Brändle, Colorado, de Pablo and Sánchez \cite{Brandle}, the Fourier transform will be our main technique.

We summarize our main existence result, which will be proved in Section \ref{GS}:
\begin{theorem}\label{t1}
	Suppose that conditions \textup{($f1$)-($f3$)}, \textup{($V1$)-($V3$)} and \textup{($W_h$)} are valid. Then, problem \eqref{P} has a \emph{positive} ground state solution $u\in H^1(\mathbb{R}^{N+1}_+,x^{1-2\sigma})$. 
\end{theorem}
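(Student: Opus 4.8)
The plan is to set up a variational framework on the extension space $H := H^1(\mathbb{R}^{N+1}_+, x^{1-2\sigma})$ and find a ground state via a minimization-over-the-Nehari-manifold argument, handling the loss of compactness with a concentration-compactness/comparison argument using the limit problem at infinity.

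First I would introduce the natural energy functional associated to \eqref{P},
\[
J(u) = \frac{1}{2}\int_{\mathbb{R}^{N+1}_+} x^{1-2\sigma}\bigl(|\nabla u|^2 + m^2 u^2\bigr)\,\dd x\,\dd y + \frac{1}{2}\int_{\mathbb{R}^N} V(y)\,u(0,y)^2\,\dd y - \frac{1}{2}\int_{\mathbb{R}^N}\bigl(W*F(u(0,\cdot))\bigr)F(u(0,\cdot))\,\dd y,
\]
and verify it is well-defined and $C^1$ on $H$. The key ingredients here are the trace inequality embedding $H$ into $H^\sigma(\mathbb{R}^N)$ and hence into $L^q(\mathbb{R}^N)$ for $2\le q\le 2^*_\sigma$, together with a Hardy–Littlewood–Sobolev / Young-type estimate using ($W_h$) to control the convolution term; hypotheses ($f1$)--($f2$) give the subcritical growth $|F(t)|\lesssim |t|^2 + |t|^\theta$ that makes the nonlocal term finite and weakly continuous on bounded sets. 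Condition ($V1$), with the constant $\mathcal{K}(\Phi_\sigma)$ chosen as the best constant in the trace/coercivity inequality, ensures that the quadratic part $u\mapsto \int x^{1-2\sigma}(|\nabla u|^2+m^2u^2) + \int V u(0,\cdot)^2$ is positive definite and equivalent to $\|u\|_H^2$, so the geometry is not destroyed by the sign-changing potential.

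Next I would check the mountain-pass geometry of $J$: condition ($f1$) plus the quadratic control of $V$ gives a strict local minimum at $0$, and ($f3$) (which yields $2F(t)\le tf(t)$ and in fact $F$ superquadratic along rays, since $f(t)/t$ increasing forces $F(t)/t^2$ nondecreasing) gives $J(tu)\to-\infty$ along any ray with $u(0,\cdot)\not\equiv 0$. I would then pass to the Nehari manifold $\mathcal{N} = \{u\in H\setminus\{0\}: J'(u)u = 0\}$; using ($f3$) one shows each ray crosses $\mathcal{N}$ exactly once and that $c := \inf_{\mathcal{N}} J$ equals the mountain-pass level and is positive. A Palais–Smale (or Cerami) sequence $(u_n)$ at level $c$ is bounded in $H$ by the Ambrosetti–Rabinowitz-type inequality coming from ($f3$), so up to a subsequence $u_n \rightharpoonup u$ in $H$.

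The main obstacle is the lack of compactness: the embedding $H\hookrightarrow L^q_{\mathrm{loc}}$ is compact but not globally, so the weak limit $u$ could be zero. This is where ($V2$)--($V3$) enter. I would introduce the limit functional $J_\infty$ obtained by replacing $V$ with the constant $V_\infty$, with its own Nehari level $c_\infty$, and prove the strict inequality $c < c_\infty$ — this is the heart of the argument and follows by testing $J$ on the (translated) ground state of the autonomous problem $J_\infty$ and exploiting $V(y)\le V_\infty$ with $V\not\equiv V_\infty$ from ($V3$). A concentration-compactness splitting (or a direct Brezis–Lieb/Lions vanishing argument on the trace) then shows that if $u=0$ the PS sequence would carry energy off to infinity and produce a nontrivial solution of the limit problem at level $\le c < c_\infty$, a contradiction; hence $u\neq 0$, $u$ solves \eqref{P} weakly, and $J(u)=c$. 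Finally, positivity follows by testing with $u^- := \max\{-u,0\}$: since $F$ and $f$ are extended (or are naturally) odd/behave so that the right-hand side vanishes for the negative part, together with the maximum principle for the degenerate operator $\diver(x^{1-2\sigma}\nabla\cdot) - m^2 x^{1-2\sigma}\cdot$ (or a Harnack inequality on the extension), one concludes $u(0,\cdot) > 0$, hence $u>0$ in $\mathbb{R}^{N+1}_+$.
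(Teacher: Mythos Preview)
Your proposal is correct and follows essentially the same route as the paper: mountain-pass geometry, Nehari-manifold characterization of the level $c$, comparison with the autonomous level $c_\infty$ via $(V3)$, and a splitting/concentration-compactness argument to recover compactness below $c_\infty$, with positivity from testing against $u^-$ plus a Harnack/maximum principle. The one step you invoke but do not argue is the \emph{existence} of a ground state for the limit problem $J_\infty$ (needed both to test for $c<c_\infty$ and in the splitting); the paper treats this as a separate proposition, using translation invariance and Lions' lemma, so you should flag that as an auxiliary result to be proved first.
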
\vspace*{.2cm}

The space $H^1(\mathbb{R}^{N+1}_+,x^{1-2\sigma})$ will be defined in Section \ref{FS}. 

Theorem \ref{t1} will be obtained by comparing problem \eqref{original} with its asymptotic version, where $V(y)$ is substituted by $V_\infty$. The ground state solution for the asymptotic problem yields the minimal energy solution of \eqref{original}, by applying the so-called splitting lemma of Struwe \cite{Struwe}.

Once obtained a solution of \eqref{original}, the natural step is to consider its regularity. We will prove that any weak solution $v$ of \eqref{original} belongs to $L^\infty(\mathbb{R}^{N+1}_+)$. Then, applying a classical result of Fabes, Kenig and Serapioni \cite{Fabes} (see also \cite{CabreSire}) we conclude that $v\in C^\alpha(\mathbb{R}^{N+1}_+)$.
\begin{theorem}\label{t2}
	Any solution $v$ of problem \eqref{P} satisfies
	\[v\in L^\infty(\mathbb{R}^{N+1}_+)\cap C^\alpha(\mathbb{R}^{N+1}_+).\]
\end{theorem}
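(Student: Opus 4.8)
The plan is to establish the $L^\infty$ bound first and then invoke the De Giorgi--Nash--Moser-type regularity theory for degenerate equations with the $A_2$-Muckenhoupt weight $x^{1-2\sigma}$ due to Fabes, Kenig and Serapioni in order to upgrade to Hölder continuity. The starting point is the weak formulation of \eqref{P} in $H^1(\mathbb{R}^{N+1}_+, x^{1-2\sigma})$: for a weak solution $v$ one has, for all admissible test functions $\varphi$,
\[
\kappa_\sigma\!\int_{\mathbb{R}^{N+1}_+}\! x^{1-2\sigma}\bigl(\nabla v\cdot\nabla\varphi + m^2 v\varphi\bigr)\,\dd x\,\dd y
= \int_{\mathbb{R}^N}\!\bigl(-V(y)v + (W*F(v))f(v)\bigr)\varphi\,\dd y ,
\]
where $v$ on the right denotes the trace $v(0,\cdot)$. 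The first step is to control the nonlocal term on the boundary: using $(f1)$, $(f2)$ one has $|f(v)|\le \varepsilon|v| + C_\varepsilon|v|^{\theta-1}$ and $|F(v)|\le \varepsilon|v|^2 + C_\varepsilon|v|^\theta$, and the convolution $W*F(v)$ is estimated by splitting $W=W_1+W_2$ as in $(W_h)$, using Young's inequality for the $L^r$ part and an $L^\infty$ bound for the other, together with the trace Sobolev embedding $H^1(\mathbb{R}^{N+1}_+,x^{1-2\sigma})\hookrightarrow H^\sigma(\mathbb{R}^N)\hookrightarrow L^{2^*_\sigma}(\mathbb{R}^N)$. This shows the right-hand side behaves like a subcritical term, i.e. the boundary reaction is of the form $g(y,v)$ with $|g(y,v)|\le C(1+|v|^{p-1})$ for some $p<2^*_\sigma$.

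The second step is a Moser iteration adapted to the extension problem. I would test the equation with $\varphi = v|v_K|^{2(\beta-1)}$ (or a truncated version $v_K=\min\{|v|,K\}$ to make the test function admissible), absorb the good gradient term on the left, and use the weighted Sobolev inequality of Fabes--Kenig--Serapioni on $\mathbb{R}^{N+1}_+$ — with the sharp exponent $2^*_\sigma$ on the boundary slice — to obtain a reverse-Hölder type gain $\|v\|_{L^{\beta 2^*_\sigma}(\mathbb{R}^N)} \le (C\beta)^{1/\beta}\|v\|_{L^{\beta\cdot 2}}$. Iterating $\beta_{k+1} = (2^*_\sigma/2)\beta_k$ and checking that the product of constants converges (this is where the subcriticality $p<2^*_\sigma$ is essential, so that the iteration starts and the bad term is genuinely lower order) yields $v(0,\cdot)\in L^\infty(\mathbb{R}^N)$; a maximum-principle / energy comparison on half-balls then propagates the bound into the interior to give $v\in L^\infty(\mathbb{R}^{N+1}_+)$.

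The third step is routine once $v\in L^\infty$: the boundary datum $-V(y)v+(W*F(v))f(v)$ is then a bounded function, so $v$ solves a degenerate elliptic equation $\diver(x^{1-2\sigma}\nabla v) = m^2 x^{1-2\sigma} v$ in $\mathbb{R}^{N+1}_+$ with bounded Neumann data in the $A_2$-weighted sense; the interior and boundary Hölder estimates of Fabes--Kenig--Serapioni \cite{Fabes} (see also \cite{CabreSire}) apply directly and give $v\in C^\alpha(\mathbb{R}^{N+1}_+)$ for some $\alpha\in(0,1)$. The main obstacle is the second step: one must carefully handle the \emph{nonlocal} convolution factor $W*F(v)$ inside the Moser iteration — it is not pointwise controlled by $|v|$ but only through an integral of $F(v)$ — so the estimate of $\|W*F(v)\|$ must be done at each iteration level in a norm that does not deteriorate, which is exactly why the exponent restriction in $(W_h)$, namely $r>\frac{N}{N(2-\theta)+2\sigma\theta}$, is needed to keep $W*F(v)$ bounded (or in a high enough $L^q$) once a modest integrability of $v$ is known, thereby decoupling the nonlocal term from the iteration and reducing it to the standard semilinear case.
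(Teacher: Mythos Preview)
Your plan is correct and is essentially the approach the paper takes: it too estimates $W*F(v)$ via Young's inequality and $(W_h)$, runs a Moser-type iteration on the trace using truncated test functions $\varphi=v\,v_T^{2\beta}$ together with the Sobolev trace embedding into $L^{2^*_\sigma}(\mathbb{R}^N)$, and then invokes the Fabes--Kenig--Serapioni Hölder theory once $v\in L^\infty$. The only refinement worth flagging is that the paper carries out your ``decoupling'' idea as two separate iterations---a first bootstrap with $1+\beta_k=(\theta/2)^k$ that yields $v(0,\cdot)\in L^p(\mathbb{R}^N)$ for every finite $p$ (hence $W*F(v)\in L^\infty$), followed by a standard Moser iteration with tracked constants to reach $L^\infty$---and the interior $L^\infty$ bound is obtained not by a maximum principle on half-balls but by showing the weighted norms $|v|_{\sigma,q}$ are uniformly bounded in $q$.
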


It should be mentioned that this regularity of $v$ does not depend on hypotheses $(f3)$, but only on $(f1)$ and $(f2)$. 

The path that leads to Theorem \ref{t2} is arduous: cut-off and a bootstrap argument shows that $v(0,\cdot)\in L^p(\mathbb{R}^N)$ for all $p\in [2,\infty)$. The iteration process of Moser proves that $v\in L^\infty(\mathbb{R}^{N+1}_+)$. And the classical result of Fabes, Kenig and Serapioni \cite[Theorems 2.3.12 and 2.3.15]{Fabes} conduces to the desired result.

Finally, we prove
\begin{theorem}\label{t3}Suppose that $v\in H^1(\mathbb{R}^{N+1}_+,x^{1-2\sigma})$ is a critical point of the energy functional $I$, then $v\in C^\alpha(\mathbb{R}^{N+1}_+)\cap L^\infty(\mathbb{R}^{N+1}_+)$ satisfies
	\[\sup_{y\in\mathbb{R}^{N}}|v(x,y)|\leq C|h|_2x^{(2\sigma-1)/2}e^{-mx}\]	
	and therefore
	\[|v(x,y)|e^{\lambda x}\to 0\]
	as $x\to \infty$, for any $\lambda<m$.
\end{theorem}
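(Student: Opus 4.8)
The plan is to exploit the explicit Stinga–Torrea representation of the extension together with the exponential decay built into the kernel $e^{-x^2/4t}$ and the mass term $m^2$. Once we know from Theorem \ref{t2} that $v$ is bounded and Hölder continuous, and in particular that its trace $h:=v(0,\cdot)\in L^2(\mathbb{R}^N)$ (this follows because $v\in H^1(\mathbb{R}^{N+1}_+,x^{1-2\sigma})$ and $v$ is a genuine solution of \eqref{P}, so $h$ solves the nonlocal equation \eqref{original} and, by the bootstrap of Theorem \ref{t2}, $h\in L^p$ for all $p\in[2,\infty)$), we may represent $v$ for $x>0$ by
\[
v(x,y)=\frac{1}{\Gamma(\sigma)}\int_0^\infty \bigl(e^{-tL}(L^\sigma h)\bigr)(y)\,e^{-\frac{x^2}{4t}}\,\frac{\dd t}{t^{1-\sigma}},
\]
with $L=-\Delta+m^2$. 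The first step is therefore to record that $L^\sigma h = -Vh+[W*F(h)]f(h)\in L^2(\mathbb{R}^N)$, which is immediate from $(f1)$–$(f2)$, $(W_h)$ and the $L^p$-regularity of $h$; call $g:=L^\sigma h$.

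The second step is to estimate the heat semigroup of $L=-\Delta+m^2$. Since $e^{-tL}=e^{-tm^2}e^{t\Delta}$, Young's inequality gives $|e^{-tL}g|_\infty\le e^{-tm^2}\,(4\pi t)^{-N/4}|g|_2$ (using the $L^2\to L^\infty$ bound for the free heat kernel, with the constant written explicitly). Inserting this into the representation formula yields
\[
\sup_{y}|v(x,y)|\le \frac{|g|_2}{\Gamma(\sigma)(4\pi)^{N/4}}\int_0^\infty e^{-tm^2}\,e^{-\frac{x^2}{4t}}\,\frac{\dd t}{t^{1-\sigma+N/4}}.
\]
The third step is purely a computation with the remaining integral: substituting $t=\tfrac{x}{2m}s$ (or directly invoking the integral representation of the modified Bessel function $K_\nu$) shows that $\int_0^\infty e^{-tm^2-x^2/(4t)}t^{\sigma-1-N/4}\dd t = C_{N,\sigma,m}\,x^{\sigma-N/4}\,K_{\,N/4-\sigma}(mx)$, and the standard asymptotics $K_\nu(r)\sim \sqrt{\pi/(2r)}\,e^{-r}$ as $r\to\infty$ give a bound of the form $C\,x^{(2\sigma-1)/2}e^{-mx}$ for $x\ge 1$; for $x\in(0,1]$ the claimed inequality is trivial after adjusting $C$, since $v$ is bounded and $x^{(2\sigma-1)/2}e^{-mx}$ is bounded below on compact subsets of $(0,\infty)$ — more care is needed near $x=0$ when $2\sigma-1<0$, but there $x^{(2\sigma-1)/2}\to\infty$, so the inequality again holds trivially. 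This establishes $\sup_y|v(x,y)|\le C|h|_2\,x^{(2\sigma-1)/2}e^{-mx}$, and multiplying by $e^{\lambda x}$ with $\lambda<m$ makes the right-hand side tend to $0$ as $x\to\infty$, which is the final assertion.

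The main obstacle I anticipate is not any single estimate but the justification that the Stinga–Torrea representation actually applies to our $v$: a priori $v$ is only known to be a finite-energy weak solution of the extension problem \eqref{P}, whereas the representation formula is stated for the solution generated from a boundary datum $h\in\mathrm{Dom}(L^\sigma)$. One must argue that the two coincide — e.g. by showing the difference is a finite-energy solution of the extension equation with zero trace, hence zero by the energy/uniqueness statement underlying the extension construction, or alternatively by redoing the computation on the Fourier side as announced in the introduction (the symbol of the extension is $|\xi|^2+m^2$, and one solves the ODE in $x$ explicitly, selecting the decaying Bessel-type solution). A secondary technical point is making the Fubini interchange and the $L^2\to L^\infty$ bound rigorous, which requires $g\in L^2$ and the integrability of $e^{-tm^2}t^{\sigma-1-N/4}e^{-x^2/4t}$ near $t=0$ and $t=\infty$ — both clear for $x>0$.
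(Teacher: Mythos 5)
Your approach is genuinely different from the paper's. The paper works directly on the Fourier side, writing $\hat{v}(x,\xi)=\hat h(\xi)\Phi_\sigma\bigl(\sqrt{m^2+4\pi^2|\xi|^2}\,x\bigr)$, so that $v(x,\cdot)=h*\psi_x$ with $\hat\psi_x(\xi)=\Phi_\sigma\bigl(\sqrt{m^2+4\pi^2|\xi|^2}\,x\bigr)$; then Cauchy--Schwarz for the convolution and Plancherel give $\sup_y|v(x,y)|\le |h|_2\,|\hat\psi_x|_2$, and the only work left is to estimate $\int_{\mathbb R^N}|\Phi_\sigma(\sqrt{m^2+4\pi^2|\xi|^2}\,x)|^2\,\dd\xi$ using the asymptotics \eqref{asymp}, split into the cases $\sigma\le 1/2$ and $\sigma>1/2$. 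This route is self-contained, produces $|h|_2$ immediately, and sidesteps the ``main obstacle'' you correctly worry about (whether the abstract Stinga--Torrea representation applies to the finite-energy weak solution), because the Fourier-side formula is derived directly from \eqref{Fourier}. Your semigroup-plus-Bessel-function reduction is a reasonable alternative route, and your $K_\nu$ asymptotics are computed correctly; the extra factor $x^{-N/4}$ you pick up is harmless for $x\ge 1$.

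There is, however, one genuine gap: the representation you invoke,
\[
v(x,y)=\frac{1}{\Gamma(\sigma)}\int_0^\infty \bigl(e^{-tL}(L^\sigma h)\bigr)(y)\,e^{-x^2/4t}\,\frac{\dd t}{t^{1-\sigma}},
\]
is in terms of $g:=L^\sigma h$, and the $L^2\to L^\infty$ semigroup bound therefore yields
\[
\sup_y|v(x,y)|\lesssim |L^\sigma h|_2\,x^{(2\sigma-1)/2-N/4}e^{-mx},
\]
with the constant $|L^\sigma h|_2$, not $|h|_2$. In your final sentence you silently replace $|g|_2$ by $|h|_2$. That substitution is not valid: $|L^\sigma h|_2$ is a higher-order Sobolev quantity, and the available identity $L^\sigma h=-Vh+[W*F(h)]f(h)$ bounds it by quantities like $|h|_2$, $|h|_\theta^{\theta-1}$, $|W*F(h)|_\infty$, etc., but not by $C|h|_2$ alone. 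To get the stated $|h|_2$-dependence by the semigroup route you should instead use the Poisson-kernel form of the extension, $v(x,y)=\tfrac{x^{2\sigma}}{4^\sigma\Gamma(\sigma)}\int_0^\infty (e^{-tL}h)(y)\,e^{-x^2/4t}\,t^{-1-\sigma}\,\dd t$, which carries $h$ rather than $L^\sigma h$; alternatively, pass to the Fourier side as the paper does. Everything else in your plan then goes through.
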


The paper is organized as follows. Function spaces and immersions are treated in Section \ref{FS}.  Some preliminaries results related to the energy functional are exposed in Section \ref{Prelim}. The existence of a ground state solution for the asymptotic problem (where $V(y)$ is replaced by $V_\infty$) and the ground state solution of \eqref{original} is obtained in Section \ref{GS}, where both problems are related and solved. Theorem \ref{t2} and Theorem \ref{t3} are proved in Sections \ref{regularity} and \ref{decay}, respectively. In the Appendix we prove a Hopf-type principle that leads to the positivity of any solution of problem \eqref{P} in $\overline{\mathbb{R}^{N+1}_+}$.
\section{Function Spaces}\label{FS}

Following close \cite{Brandle}, by taking the Fourier transform in $y\in\mathbb{R}^N$ for a fixed $x>0$ of problem \eqref{original} we have
\begin{equation}\label{Fourier}\left\{\begin{array}{ll}
-(m^2+4\pi^2|\xi|^2)\hat{u}(x,\xi)+\frac{1-2\sigma}{x}\hat{u}_x+\hat{u}_{xx}=0, &\text{in }\ \mathbb{R}^{N+1}_+,\\
\hat{u}(0,\xi)=\hat{h}(\xi).\end{array}\right.\end{equation}

Therefore, the solution of \eqref{Fourier} is given by
\begin{equation}\label{transform}\hat{u}(x,\xi)=\hat{h}(\xi)\Phi_\sigma\left(\sqrt{m^2+4\pi^2|\xi|^2}\,x\right),\end{equation}
where $\Phi_\sigma$ solves (see \cite{Brandle,Capella})
\begin{equation}\label{Phi}-\Phi+\frac{1-2\sigma}{s}\Phi'+\Phi''=0,\qquad \Phi(0)=1,\quad \lim_{s\to\infty}\Phi(s)=0.
\end{equation}
We denote $c=\sqrt{m^2+4\pi^2|\xi|^2}$. The ordinary differential equation \eqref{Phi} is a Bessel equation and its solution $\Phi_\sigma$  a minimum of the functional
\[\mathcal{K}(\Phi)=\int_0^\infty \left(|\Phi(s)|^2+|\Phi'(s)|^2\right)s^{1-2\sigma}\dd s\]
and satisfies the asymptotic behavior 
\begin{equation}\label{asymp}\Phi_{\sigma}(s)\sim \left\{\begin{array}{ll}
1-c_1s^{2\sigma} &\text{when }\ s\to 0,\\
c_2s^{(2\sigma-1)/2}e^{-s} &\text{when }\ s\to \infty,\end{array}\right.\end{equation}
where the constants $c_1$ and $c_2$ depend on $\sigma$, see \cite{Brandle, Capella}. Therefore, as consequence of \eqref{asymp}, $0<\mathcal{K}(\Phi_\sigma)<\infty$.

Observe that
\begin{align*}
\int_{\mathbb{R}^{N}}|\nabla u(x,y)|^2\dd y&=\int_{\mathbb{R}^{N}}\left(|\nabla_y u(x,y)|^2+\left|\frac{\partial u}{\partial x}(x,y)\right|^2\right)\dd y\\
&=\int_{\mathbb{R}^{N}}\left(4\pi^2|\xi|^2|\hat{u}(x,\xi)|^2+\left|\frac{\partial \hat{u}}{\partial x}(x,\xi)\right|^2\right)\dd \xi.
\end{align*}
Multiplying by $x^{1-2\sigma}$ and integrating in $x$, we obtain
\begin{multline*}
\iint_{\mathbb{R}^{N+1}_+}\left(|\nabla u(x,y)|^2+m^2|u(x,y)|^2\right)x^{1-2\sigma}\dd y\dd x
\end{multline*}\vspace*{-.3cm}
\begin{align*}
&=\int_0^{\infty}\int_{\mathbb{R}^{N}}|\nabla u(x,y)|^2x^{1-2\sigma}\dd y\dd x+\int_0^\infty\int_{\mathbb{R}^{N}}m^2|u(x,y)|^2x^{1-2\sigma}\dd y\dd x\\
&=\int_0^\infty\int_{\mathbb{R}^{N}}\left(c^2|\hat{u}(x,\xi)|^2+\left|\frac{\partial \hat{u}}{\partial x}(x,\xi)\right|^2\right)x^{1-2\sigma}\dd \xi\dd x\\
&=\int_0^\infty\int_{\mathbb{R}^{N}}\left(c^2|\hat{h}(\xi)|^2|\Phi(cx)|^2+|\hat{h}(\xi)c\Phi'(cx)|^2\right)x^{1-2\sigma}\dd \xi\dd x\\
&=\int_0^\infty\int_{\mathbb{R}^{N}}c^2|\hat{h}(\xi)|^2\left(|\Phi(cx)|^2+|\Phi'(cx)|^2\right)x^{1-2\sigma}\dd \xi\dd x
\end{align*}
The change of variables $s=cx$ in the last integral yields
\begin{subequations}
\begin{multline*}
\iint_{\mathbb{R}^{N+1}_+}\left(|\nabla u(x,y)|^2+m^2|u(x,y)|^2\right)x^{1-2\sigma}\dd y\dd x
\end{multline*}
\begin{align}
&=\int_0^{\infty}\int_{\mathbb{R}^{N}}c^{2\sigma}|\hat{h}(\xi)|^2\left(|\Phi(s)|^2+|\Phi'(s)|^2\right)s^{1-2\sigma}\dd\xi\dd s\nonumber\\
&=\int_{\mathbb{R}^{N}}c^{2\sigma}|\hat{h}(\xi)|^2\dd\xi\left(\int_0^{\infty}\left(|\Phi(s)|^2+|\Phi'(s)|^2\right)s^{1-2\sigma}\dd s\right)\nonumber\\
&\geq\mathcal{K}(\Phi_\sigma)\int_{\mathbb{R}^{N}}c^{2\sigma}|\hat{h}(\xi)|^2\dd\xi
=\mathcal{K}(\Phi_\sigma)\int_{\mathbb{R}^{N}}\left(4\pi^2|\xi|^2+m^2\right)^{\sigma}|\hat{h}(\xi)|^2\dd\xi\nonumber\\
&\geq \left\{\begin{array}{l}\label{partial} \mathcal{K}(\Phi_\sigma)m^{2\sigma}\displaystyle\int_{\mathbb{R}^{N}}|u(0,y)|^2\dd y,\\
\mathcal{K}(\Phi_\sigma)\displaystyle\int_{\mathbb{R}^{N}}\left(4\pi^2|\xi|^2\right)^{\sigma}|\hat{h}(\xi)|^2\dd\xi=\mathcal{K}(\Phi_\sigma)\displaystyle\int_{\mathbb{R}^{N}}\left|(-\Delta)^\sigma u(0,y)\right|^2\dd y\end{array}\right.\\
&\geq C\left(\displaystyle\int_{\mathbb{R}^N}|u(0,y)|^2\dd y+\int_{\mathbb{R}^{N}}\int_{\mathbb{R}^{N}}\frac{|u(0,y)-u(0,w)|^2}{|y-w|^{N+2\sigma}} \dd w\dd y\right),\label{immersion0}
\end{align}
\end{subequations}
see \cite{Guide} for the definition of the fractionary $\sigma$-Laplacian operator.

We now translate our results in terms of function spaces and their norms. For this, let us start considering the Hilbert space
\[H^1(\mathbb{R}^{N+1}_+,x^{1-2\sigma})=\left\{u\colon \mathbb{R}^{N+1}_+\to\mathbb{R}: \iint_{\mathbb{R}^{N+1}_+}\left(|\nabla u|^2+|u|^2\right)x^{1-2\sigma}\dd y\dd x<\infty \right\}\]
endowed with the norm
\[\|u\|_\sigma=\left(\iint_{\mathbb{R}^{N+1}_+}\left(|\nabla u(x,y)|^2+|u(x,y)|^2\right)x^{1-2\sigma}\dd y\dd x\right)^{\frac{1}{2}}.\]
Observe that we have the immersion
\begin{equation}\label{immersionloc}H^1(\mathbb{R}^{N+1}_+,x^{1-2\sigma})\hookrightarrow L^q_{loc}(\mathbb{R}^{N+1}_+)\quad\text{for any}\quad q\in [2,2^*],\end{equation}
where $2^*=2(N+1)/(N-1)$.

Let $v\colon\mathbb{R}^N\to \mathbb{R}$ be a measurable function. For any $\sigma\in (0,1)$ we denote
\[[v]^2_{\sigma,2}=\int_{\mathbb{R}^N}\int_{\mathbb{R}^N}\frac{|v(x)-v(y)|^2}{|x-y|^{N+2\sigma}}\dd x\dd y.\]
We also denote by $|\cdot|_q$ and $|\cdot|_{\sigma,q}$
the usual norm in $L^q(\mathbb{R}^N)$ and $L^q(\mathbb{R}^{N+1}_+,x^{1-2\sigma})$, respectively.
Now we consider the space
\[W^{\sigma,2}(\mathbb{R}^N)=\left\{v\in L^2(\mathbb{R}^N)\,:\,[v]^2_{\sigma,2}<\infty\right\}\]
endowed with the norm
\[\|v\|_{N,\sigma}=\left(|v|^2_2+[v]^2_{\sigma,2}\right)^{1/2},\]
which makes $W^{\sigma,2}(\mathbb{R}^N)$ a reflexive Banach space, see  \cite{Demengel}. The term $[v]_{\sigma,2}$ is the Gagliardo seminorm of $v$. It is well-known that $W^{\sigma,2}(\mathbb{R}^N)=H^\sigma(\mathbb{R}^N)$, see \cite{Demengel,Guide}.

Translating \eqref{immersion0} in terms of the norms of $H^1(\mathbb{R}^{N+1}_+,x^{1-2\sigma})$ and $W^{\sigma,2}(\mathbb{R}^N)$, it means that
\begin{align}\label{immersion1}
\left(\int_{\mathbb{R}^n}|u(0,y)|^2\dd y\right)^{1/2}&\leq C'\left(\iint_{\mathbb{R}^{N+1}_+}\left(|\nabla u(x,y)|^2+|u|^2\right)x^{1-2\sigma}\dd y\dd x\right)^{1/2}\nonumber\\
&=C'\|u\|_\sigma,
\end{align}
for all $u\in H^1(\mathbb{R}^{N+1}_+,x^{1-2\sigma})$. Inequality \eqref{immersion1} goes back to J.L. Lions \cite{Lions}, see \cite[proof of Proposition 2.1]{Capella}.

According to Brändle et al \cite[Theorem 2.1]{Brandle} or Xiao \cite[Theorem 1.1]{Xiao}, we also have
\[\left(\int_{\mathbb{R}^n}|u(0,y)|^{2N/(N-2\sigma)}\right)^{(N-2\sigma)/N}\leq C\iint_{\mathbb{R}^{N+1}_+}|\nabla u(x,y)|^2x^{1-2\sigma}\dd x\dd y.\]
(The value of the best constant $C$ is explicitly given in those papers.) Therefore,
\begin{equation}\label{immersion1a}\left(\int_{\mathbb{R}^n}|u(0,y)|^{2N/(N-2\sigma)}\right)^{(N-2\sigma)/2N}\leq C\|u\|_\sigma.\end{equation}
Interpolating, we obtain for all $q\in [2,2^*_\sigma]$
\begin{align}\label{immersion1aa}
|u(0,\cdot)|_q=\left(\int_{\mathbb{R}^N}|u(0,y)|^q\dd y\right)^{1/q}\leq C''\|u\|_\sigma,\end{align}
where the constant $C''$ depends on $q$ and
\[2^{*}_\sigma= \frac{2N}{N-2\sigma}.\]

Inequality \eqref{partial}, in the case $m=1$, translates into
\begin{equation}\label{immersion1b}
\int_{\mathbb{R}^N}|u(0,y)|^2\dd y\leq \frac{1}{\mathcal{K}(\Phi_{\sigma})}\|u\|^2_\sigma.
\end{equation}

Since we suppose $\sigma\in (0,1)$ and we have $2\sigma<N$, the immersion
\begin{equation}\label{immersion2}
W^{\sigma,2}(\mathbb{R}^N)\hookrightarrow L^q(\mathbb{R}^N)\end{equation}
is continuous for any $q\in [2,2^*_\sigma]$, see \cite[Theorem 4.47]{Demengel}.
The space $W^{\sigma,2}(\Omega)$ is well-defined for an open, bounded set $\Omega\subset \mathbb{R}^N$. In the sequel, we suppose $\Omega$ to have Lipschitz boundary, so that 
denoting
\[[v]^2_{W^{\sigma,2}(\Omega)}=\int_\Omega\int_\Omega\frac{|v(x)-v(y)|^2}{|x-y|^{N+2\sigma}}\dd x\dd y\]
and
\begin{align*}W^{\sigma,2}(\Omega)&=\left\{v\in L^2(\Omega)\,:\,[v]^2_{W^{\sigma,2}(\Omega)}<\infty\right\},
\end{align*}
then $W^{\sigma,2}(\Omega)$ is a reflexive Banach space (see, e.g., \cite{Demengel} and \cite{Guide}) endowed with the norm
\[\|u\|_{W^{\sigma,2}(\Omega)}=|u|_2+[u]_{W^{\sigma,2}(\Omega)}.\]

We also have that the immersion
\begin{equation}\label{immersion3}
W^{\sigma,2}(\Omega)\hookrightarrow L^q(\Omega)\end{equation} is compact for any $q\in \left[1,2^{*}_\sigma\right)$, see \cite[Theorem 4.54]{Demengel}. As usual, the immersion $W^{\sigma,2}(\Omega)\hookrightarrow L^{2^{*}_\sigma}(\Omega)$ is continuous: see \cite[Corollary 4.53]{Demengel}. We denote the norm in the space $L^q(\Omega)$ by $|\cdot|_{L^q(\Omega)}$.

\section{Preliminaries}\label{Prelim}
We denote by $u(0,y)$ the trace of $u\in H^1(\mathbb{R}^{N+1}_+)$ in $\mathbb{R}^N$.
\begin{definition}\label{wsolution} We say that $u \in H^1(\mathbb{R}^{N+1}_+)$ is a weak solution of \eqref{P} if, and only if, we have
\begin{multline*}
\iint_{\mathbb{R}^{N+1}_+}\left(\nabla u\nabla \varphi + m^2u\varphi\right)x^{1-2\sigma}\dd y\dd x\ +\ \int_{\mathbb{R}^N} V(y)u(0,y)\varphi(0,y)\dd y\phantom{\hspace{2cm}}
\end{multline*}
\begin{align}\label{derivative} = \int_{\mathbb{R}^N}\big[W*F(u(0,y))\big]f(u(0,y))\varphi(0,y) \dd y,
\end{align}
for all $\varphi \in H^1(\mathbb{R}^{N+1}_+, x^{2\sigma-1})$.
\end{definition}
Since the integration variables are clear from the context, we usually omit $\dd x$ and $\dd y$.

The functional $I\colon H^1(\mathbb{R}^{N+1}_+)\to \mathbb{R}$ defined by
\begin{align}\label{I} I(u) &= \frac{1}{2}  \iint_{\mathbb{R}^{N+1}_+} \left(|\nabla u|^2 + m^2u^2\right)x^{1-2\sigma} +\frac{1}{2}\int_{\mathbb{R}^N} V(y)|u(0,y)|^2\nonumber\\ &\qquad-\frac{1}{2}\int_{\mathbb{R}^N}\big[W*F(u(0,y))\big]F(u(0,y)), 
\end{align}
describes the ``energy'' naturally attached to problem \eqref{P}. As usual, $I$ is a $C^1$ functional and critical points of $I$ are weak solutions \eqref{P}.

\begin{remark}\label{obs1}
It follows from \textup{($f1$)} and \textup{($f2$)} that, for any fixed $\xi>0$, there exists a constant $C_\xi$ such that
\begin{equation}\label{boundf}|f(t)|\leq\xi t+C_\xi t^{\theta-1},\quad\forall\ t\geq 0\end{equation}
and
\begin{equation}\label{boundF}|F(t)|\leq\xi t^2+C_\xi t^{\theta}\leq C(t^2+t^\theta),\quad\forall\ t\geq 0.\end{equation}
Observe that $u(0,y)\in L^\theta(\mathbb{R}^{N})$ and $u(0,y)\in L^2(\mathbb{R}^{N})$ imply $F(u(0,y))\in L^1(\mathbb{R}^{N})$.
\end{remark}

\begin{proposition}[Hausdorff-Young]\label{HYoung}
Assume that, for $1\leq p, r, s \leq \infty$, we have $f\in L^p (\mathbb{R}^{N})$, $g\in  L^r (\mathbb{R}^{N})$ and
\[\frac{1}{p}+\frac{1}{r}= 1 +\frac{1}{s}.\] Then
\[|f*g|_{s} \leq |f|_{p}|g|_{r}.\]
\end{proposition}

We now handle the existence of the energy functional \eqref{I}. We denote by $L^q_w(\mathbb{R}^{N})$ the weak $L^q$ space and by $|\cdot|_{q_w}$ its usual norm (see \cite{Lieb}). The next result is a generalized version of the Hardy-Littlewood-Sobolev inequality:
\begin{proposition}[Lieb \cite{Lieb}]\label{pLieb} Assume that $p,q,r\in(1,\infty)$ and \[\frac{1}{p}+\frac{1}{q}+\frac{1}{r}=2.\]
	Then, for some constant $N=N(p,q,r)>0$ and for any $f\in L^p(\mathbb{R}^{N})$, $g\in L^r(\mathbb{R}^{N})$ and
	$h\in L^q_w(\mathbb{R}^{N})$, we have the inequality
	\[\int_{\mathbb{R}^{N}}\int_{\mathbb{R}^{N}}f(t)h(t-s)g(s)\dd t\dd s\leq N|f|_{p}|g|_{r} |h|_{q_w}.\]
\end{proposition}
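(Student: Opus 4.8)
The plan is to deduce this statement from two classical facts: the Riesz rearrangement inequality and the (ordinary) Hardy--Littlewood--Sobolev inequality for the Riesz kernel $|x|^{-\lambda}$. First I would reduce to nonnegative functions: since $\big|\iint f(t)h(t-s)g(s)\big|\le\iint |f(t)|\,|h(t-s)|\,|g(s)|$ and the $L^p$, $L^r$ and weak-$L^q$ norms are invariant under $f\mapsto|f|$, $g\mapsto|g|$, $h\mapsto|h|$, we may assume $f,g,h\ge 0$. Then I would pass to symmetric decreasing rearrangements $f^*,g^*,h^*$: the Riesz rearrangement inequality applied to $\iint f(t)g(s)h(t-s)\,\dd t\,\dd s$ yields $\iint f(t)h(t-s)g(s)\le\iint f^*(t)h^*(t-s)g^*(s)$, and since rearrangement preserves the distribution function one has $|f^*|_p=|f|_p$, $|g^*|_r=|g|_r$ and $|h^*|_{q_w}=|h|_{q_w}$. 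Hence it suffices to prove the inequality when $f,g,h$ are radial and nonincreasing.

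The key gain from this reduction is a pointwise bound on $h$. If $h$ is radial nonincreasing with $|h|_{q_w}=M$, then for any $x\neq 0$ with $h(x)>0$ the superlevel set $\{h\ge h(x)\}$ contains the ball of radius $|x|$, so $\omega_N|x|^N\le\big|\{h\ge h(x)\}\big|\le\big(M/h(x)\big)^q$, which gives $h(x)\le \omega_N^{-1/q}\,M\,|x|^{-N/q}$ (the case $h(x)=0$ being trivial). Substituting this into the rearranged integral,
\[\iint f^*(t)h^*(t-s)g^*(s)\,\dd t\,\dd s\le \omega_N^{-1/q}\,M\iint \frac{f^*(t)\,g^*(s)}{|t-s|^{N/q}}\,\dd t\,\dd s,\]
and the right-hand side is exactly the Hardy--Littlewood--Sobolev functional with Riesz exponent $\lambda=N/q\in(0,N)$.

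It remains to invoke the classical HLS inequality, which bounds $\iint f^*(t)|t-s|^{-\lambda}g^*(s)\le C(p,r,N)\,|f^*|_p|g^*|_r$ precisely under the scaling relation $\tfrac1p+\tfrac1r+\tfrac{\lambda}{N}=2$; with $\lambda=N/q$ this is our hypothesis $\tfrac1p+\tfrac1q+\tfrac1r=2$. Collecting constants, one may take $N(p,q,r)=\omega_N^{-1/q}\,C(p,r,N)$, completing the proof. (This is essentially Lieb's argument; both the Riesz rearrangement inequality and the ordinary HLS inequality are standard, see \cite{Lieb,Lieb}.)

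The step I expect to be the genuine obstacle is the weak-$L^q$ endpoint itself. With $p$ and $r$ held fixed, Proposition \ref{HYoung} (Young) forces the single Lebesgue exponent $q$ onto $h$, and a naive dyadic decomposition $h=\sum_k h_k$ with $h_k\approx 2^k\chi_{E_k}$, $|E_k|\le (M2^{-k})^q$, produces a contribution of size $\sim M|f|_p|g|_r$ at \emph{every} dyadic level $k\in\mathbb{Z}$, so summing diverges. This is why a mere layering/Young argument does not suffice: one must exploit rearrangement (equivalently, a genuine real-interpolation argument with the wiggle room $1<p,r<\infty$), and delegating this to the Riesz rearrangement inequality plus classical HLS is the cleanest route.
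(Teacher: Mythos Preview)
The paper does not prove this proposition at all: it is simply quoted from Lieb \cite{Lieb} and used as a black box. Your argument is correct and is precisely the classical route (reduce to nonnegative functions, apply the Riesz rearrangement inequality, then use the pointwise bound $h^*(x)\le\omega_N^{-1/q}|h|_{q_w}|x|^{-N/q}$ for radial nonincreasing $h^*$ to reduce to the ordinary Hardy--Littlewood--Sobolev inequality with kernel $|x|^{-N/q}$); this is essentially Lieb's own proof, so there is nothing to compare.
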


\begin{lemma}\label{estconv} For a positive constant $C$ holds
\[\left|\frac{1}{2}\int_{\mathbb{R}^{N}}\big[W*F(u(0,y))\big]F(u(0,y))\right|\leq  C\left(\|u\|^{2}_{\sigma}+\|u\|^{\theta}_{\sigma}\right)^2.\]
\end{lemma}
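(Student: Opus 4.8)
The plan is to estimate the double-convolution term directly by combining the Hardy–Littlewood–Sobolev-type inequality (Proposition~\ref{pLieb}) with the decomposition $W=W_1+W_2$ from hypothesis $(W_h)$ and the Sobolev trace inequalities \eqref{immersion1aa}. First I would split
\[
\left|\int_{\mathbb{R}^{N}}\big[W*F(u(0,\cdot))\big]F(u(0,\cdot))\right|
\leq \left|\int_{\mathbb{R}^{N}}\big[W_1*F(u(0,\cdot))\big]F(u(0,\cdot))\right|
+\left|\int_{\mathbb{R}^{N}}\big[W_2*F(u(0,\cdot))\big]F(u(0,\cdot))\right|,
\]
treating the $L^r$ part $W_1$ and the $L^\infty$ part $W_2$ separately. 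For the $W_2\in L^\infty$ piece, I would use $|W_2*F(u(0,\cdot))|_\infty \leq |W_2|_\infty |F(u(0,\cdot))|_1$ and then bound $|F(u(0,\cdot))|_1^2$; by \eqref{boundF} we have $|F(u(0,y))|\leq C(|u(0,y)|^2+|u(0,y)|^\theta)$, so $|F(u(0,\cdot))|_1\leq C(|u(0,\cdot)|_2^2+|u(0,\cdot)|_\theta^\theta)\leq C(\|u\|_\sigma^2+\|u\|_\sigma^\theta)$ by \eqref{immersion1aa} applied with $q=2$ and $q=\theta$ (note $2<\theta<2^*_\sigma$ by $(f2)$), which already contributes a term of the form $C(\|u\|_\sigma^2+\|u\|_\sigma^\theta)^2$.

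For the $W_1\in L^r$ piece the natural tool is the generalized Hardy–Littlewood–Sobolev inequality; here I would instead use the Hausdorff–Young/Young convolution inequality (Proposition~\ref{HYoung}) together with Hölder: writing $g=F(u(0,\cdot))$, we have by Young's inequality $|W_1*g|_s\leq |W_1|_r |g|_t$ whenever $1/r+1/t=1+1/s$, and then $\int |W_1*g|\,|g|\leq |W_1*g|_s |g|_{s'}$ with $1/s+1/s'=1$. The constraint $r>\frac{N}{N(2-\theta)+2\sigma\theta}$ in $(W_h)$ is exactly what guarantees that one can choose the exponents $t$ and $s'$ so that both lie in the admissible interpolation range $[1\ \text{or}\ 2,\,2^*_\sigma/\theta]$ — i.e. so that $|g|_t$ and $|g|_{s'}$ are each controlled, via \eqref{boundF} and \eqref{immersion1aa}, by a sum of powers $\|u\|_\sigma^2+\|u\|_\sigma^\theta$. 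Bounding $|g|_t\leq C(|u(0,\cdot)|_{2t}^2+|u(0,\cdot)|_{\theta t}^\theta)$ when $2t,\theta t\in[2,2^*_\sigma]$ and similarly for $|g|_{s'}$, the product $|g|_t|g|_{s'}$ is then dominated by $(\|u\|_\sigma^2+\|u\|_\sigma^\theta)^2$.

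Putting the two pieces together gives the claimed bound $C(\|u\|_\sigma^2+\|u\|_\sigma^\theta)^2$. The only real obstacle is the bookkeeping of exponents in the $W_1$ term: one must verify that the interval of admissible $r$ given by $(W_h)$ translates, through $1/r+1/t=1+1/s$ and $1/s+1/s'=1$, into a valid choice with $\theta t\le 2^*_\sigma$ and $\theta s'\le 2^*_\sigma$ (and the lower bounds $\ge 2$ automatic since each $L^p$-norm of $g$ is estimated by $L^2$ and $L^\theta$ norms of $u(0,\cdot)$ via \eqref{boundF}); this is precisely the role of the lower bound on $r$, and checking it is the heart of the computation. Everything else is an application of the trace embedding \eqref{immersion1aa} and the pointwise bound \eqref{boundF}.
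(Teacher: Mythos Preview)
Your proposal is correct and follows essentially the same structure as the paper's proof: both split $W=W_1+W_2$, handle the $L^\infty$ piece via the trivial bound $|W_2*F|_\infty\le|W_2|_\infty|F|_1$ (the case $t=1$), and control the $L^r$ piece by a convolution inequality that reduces everything to $|F(u(0,\cdot))|_t$ with $\theta t\le 2^*_\sigma$, then conclude via \eqref{boundF} and the trace embedding \eqref{immersion1aa}. The only difference is that the paper invokes Proposition~\ref{pLieb} directly with $f=g=F(u(0,\cdot))$ and $h=W_1$ (yielding the symmetric relation $1/r+2/t=2$, i.e.\ $t=2r/(2r-1)$), whereas you reach the same exponent via Young's inequality (Proposition~\ref{HYoung}) followed by H\"older with the choice $t=s'$; since $W_1\in L^r$ rather than merely weak-$L^r$, your route is in fact the more elementary one, and the verification that $\theta t<2^*_\sigma$ is exactly the paper's observation that $r>N/[N(2-\theta)+2\sigma\theta]$.
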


\noindent\begin{proof}Let us denote
\[\Psi(u)=\frac{1}{2}\int_{\mathbb{R}^{N}}\big[W*F(u(0,y))\big]F(u(0,y)).\]
Since $W=W_1+W_2$,
\begin{align*}\Psi(u)&=\frac{1}{2}\int_{\mathbb{R}^{N}}\big[W_1*F(u(0,y))\big]F(u(0,y))+\frac{1}{2}\int_{\mathbb{R}^{N}}\big[W_2*F(u(0,y))\big]F(u(0,y))\nonumber\\
&=:J_1+J_2\end{align*}

Let us suppose that $|u(0,y)|^\theta\in L^t(\mathbb{R}^{N})$ for some $t\geq 1$. Then $|u(0,y)|^2\in L^t(\mathbb{R}^{N})$ and $F(u(0,y))\in L^t(\mathbb{R}^{N})$ (as consequence of \eqref{boundF}). Application of Proposition \ref{pLieb} yields
\[|J_1|=\left|\frac{1}{2}\int_{\mathbb{R}^{N}}\left[W_1*F(u(0,y))\right]\,F(u(0,y))\right|\leq N\,|W_1|_{r}|F(u(0,y))|_{t}|F(u(0,y))|_t.\]
Since
$\frac{1}{r}+\frac{2}{t}=2$ implies $t=\frac{2r}{2r-1}$, we have
\begin{align}\label{J1} |J_1| &\leq C|F(u(0,y))|_{\frac{2r}{2r-1}}|F(u(0,y)))|_{\frac{2r}{2r-1}} \leq C'\left(|u(0,y)|^2_2+|u(0,y)|^\theta_\theta\right)^2\nonumber\\
&\leq C'' ( \left\| u \right\|^2_{\sigma}+\left\| u \right\|^{\theta}_{\sigma})^2 <\infty,
\end{align}
by \eqref{immersion1aa}. (Observe that, in order to apply the immersion \eqref{immersion1aa}, we must have $t\theta<2N/(N-2\sigma)$, that is, $r>N/[N(2-\theta)+2\sigma \theta]$.

In the case $W_2\in L^\infty(\mathbb{R}^{N})$ we can take $t=1$, therefore
\begin{align}\label{J2}
|J_2|&=\left|\frac{1}{2}\int_{\mathbb{R}^{N}}\big[W_2*F(u(0,y))\big]F(u(0,y))\right|\leq C\left(|u(0,y)|^2_{2}+|u(0,y)|^\theta_{\theta}\right)^2\nonumber\\
&\leq
C''\left(\|u\|^2_{\sigma}+\|u\|^\theta_{\sigma}\right)^2.
\end{align}

From \eqref{J2} and \eqref{J1} results the claim.
$\hfill\Box$\end{proof}\vspace*{.3cm}

From Lemma \ref{estconv} follows immediately that the functional $I(u)$ is well-defined.

\section{Ground state}\label{GS}
Because we are looking for a \emph{positive} ground state solution, we suppose that $f(t)=0$ for $t<0$. Observe that our version of the mountain pass geometry is valid for all $u\in H^1(\mathbb{R}^{N+1}_+,x^{1-2\sigma})$ such that $u_+=\max\{u,0\}\neq 0$.
\begin{lemma}\label{gpm}
$I$ satisfies the mountain pass theorem geometry. More precisely,
\begin{enumerate}
\item [$(i)$] There exist $\rho,\delta>0$ such that $I|_S\geq \delta>0$ for all $v\in S$, where
\[S=\left\{v\in H^1(\mathbb{R}^{N+1}_+,x^{1-2\sigma})\,:\, \|v\|_\sigma=\rho\right\}.\]
\item [$(ii)$] For any fixed $u\in H^1(\mathbb{R}^{N+1}_+,x^{1-2\sigma})$ such that $u_+\neq 0$, there exists  $\tau\in \mathbb{R}$ such that $\|\tau u\|>\rho$ and  $I(\tau u) <0$.
\end{enumerate}
\end{lemma}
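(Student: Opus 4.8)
The plan is to verify the two parts of the mountain pass geometry using the estimates already at hand, namely Remark~\ref{obs1}, Lemma~\ref{estconv}, and the continuous embeddings \eqref{immersion1aa} and \eqref{immersion1b}.

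For part $(i)$, I would start from the definition \eqref{I} of $I$ and bound the potential term. By $(V1)$ we have $V(y)\ge -V_0$, so
\[
\frac12\iint_{\mathbb{R}^{N+1}_+}\bigl(|\nabla u|^2+m^2u^2\bigr)x^{1-2\sigma}+\frac12\int_{\mathbb{R}^N}V(y)|u(0,y)|^2
\ \ge\ \frac12\min\{1,m^2\}\,\|u\|_\sigma^2-\frac{V_0}{2}\,|u(0,\cdot)|_2^2 .
\]
Using \eqref{immersion1b} (in the form with $m=1$, which after scaling controls $|u(0,\cdot)|_2^2$ by $\mathcal K(\Phi_\sigma)^{-1}\|u\|_\sigma^2$) together with the hypothesis $V_0<\min\{1,m^2\}\mathcal K(\Phi_\sigma)$, this quadratic part is bounded below by $c_0\|u\|_\sigma^2$ for some $c_0>0$. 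For the convolution term, I would not use the crude Lemma~\ref{estconv} bound but rather re-run its proof keeping track of powers: from \eqref{boundF}, $F(u(0,\cdot))$ is controlled in $L^{2r/(2r-1)}$ by $|u(0,\cdot)|_2^2+|u(0,\cdot)|_\theta^\theta$, hence by Proposition~\ref{pLieb}/Hausdorff--Young and \eqref{immersion1aa} the whole term is $\le C(\|u\|_\sigma^2+\|u\|_\sigma^\theta)^2 \le C(\|u\|_\sigma^4+\|u\|_\sigma^{2\theta})$. Since $\theta>2$ (indeed $\theta>\max\{2,N/(N-2\sigma)\}$), both exponents $4$ and $2\theta$ exceed $2$, so for $\|u\|_\sigma=\rho$ small enough the quadratic term dominates and $I(u)\ge c_0\rho^2-C(\rho^4+\rho^{2\theta})\ge\delta>0$ uniformly on $S$.

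For part $(ii)$, fix $u$ with $u_+\ne 0$ and consider $g(\tau)=I(\tau u)$ for $\tau>0$. The first two terms of \eqref{I} scale like $\tau^2$, so it suffices to show the convolution term grows strictly faster. Here is where $(f3)$ enters: it implies the Ambrosetti--Rabinowitz inequality $2F(t)\le tf(t)$ for $t>0$, which in turn (integrating $\tfrac{d}{d\tau}\log F(\tau t)\ge 2/\tau$) gives $F(\tau t)\ge \tau^2 F(t)$ for $\tau\ge 1$ and $t>0$, and since $f(t)=0$ for $t<0$ we only see $u_+$. Then
\[
\frac12\int_{\mathbb{R}^N}\bigl[W*F(\tau u(0,\cdot))\bigr]F(\tau u(0,\cdot))\ \ge\ \frac{\tau^4}{2}\int_{\mathbb{R}^N}\bigl[W*F(u(0,\cdot))\bigr]F(u(0,\cdot)),
\]
and the right-hand integral is strictly positive because $W\ge 0$ is not identically zero, $F\ge 0$, and $F(u_+(0,\cdot))\not\equiv 0$ (as $u_+\ne 0$ and $F(t)>0$ for $t>0$ by $(f3)$, since $f(t)/t>0$ eventually). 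Hence $g(\tau)\le C_1\tau^2-C_2\tau^4\to-\infty$ as $\tau\to\infty$, so for $\tau$ large we get both $\|\tau u\|_\sigma>\rho$ and $I(\tau u)<0$.

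The main obstacle is part $(i)$, specifically making the quadratic form $\iint(|\nabla u|^2+m^2u^2)x^{1-2\sigma}+\int V|u(0,\cdot)|^2$ genuinely coercive despite the sign-changing potential: this is exactly why the constant $V_0$ was required to satisfy $V_0<\min\{1,m^2\}\mathcal K(\Phi_\sigma)$, and one must invoke the sharp trace inequality \eqref{immersion1b} with the correct constant $\mathcal K(\Phi_\sigma)$ rather than a generic embedding constant. A secondary technical point is verifying $F(t)>0$ for $t>0$ from $(f1)$ and $(f3)$ (so that the convolution integral over $u_+$ is strictly positive); $(f1)$ gives $f(t)/t\to 0$ as $t\to0^+$, and $(f3)$ forces $f(t)/t>0$ for every $t>0$, whence $f(t)>0$ and $F(t)>0$ on $(0,\infty)$. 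Everything else is a bookkeeping exercise with the inequalities already established in Sections~\ref{FS} and~\ref{Prelim}.
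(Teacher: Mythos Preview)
Your proposal is correct and follows essentially the same route as the paper. Part~$(i)$ is identical: the paper also splits $I$ into the quadratic part and $\Psi(u)$, uses $(V1)$ together with the trace inequality \eqref{immersion1b} (with the exact constant $\mathcal{K}(\Phi_\sigma)$) to get $\frac12(I_1+I_2)\ge \mathcal{C}'\|u\|_\sigma^2$, and then invokes Lemma~\ref{estconv} to absorb $\Psi(u)$ for small $\rho$. For part~$(ii)$ the paper arrives at the same $\tau^4$ lower bound for $\Psi(\tau u)$, but packages the Ambrosetti--Rabinowitz step as the differential inequality $g_u'(t)\ge \tfrac{4}{t}g_u(t)$ for $g_u(t)=\Psi(tu/\|u\|)$ and integrates; your pointwise inequality $F(\tau t)\ge \tau^2 F(t)$ is simply the same computation carried out inside the integral, so the two arguments are equivalent.
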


\noindent \begin{proof}Decomposing $I(u)$ into three terms
	\begin{align}
		I(u) =&\frac{1}{2}\iint_{\mathbb{R}^{N+1}_+}\left(|\nabla u|^2+m^2u^2\right)x^{1-2\sigma}+\frac{1}{2}\int_{\mathbb{R}^{N}}V(y)[u(0,y)]^2\nonumber\\
		&\quad-\frac{1}{2}\int_{\mathbb{R}^{N}}\left[W*F(u(0,y))\right]F(u(0,y))\nonumber\\
		=&:\frac{1}{2}I_1+\frac{1}{2}I_2-\Psi(u)\label{energy}
		\end{align}
with $\psi(u)$ defined in Lemma \ref{estconv}, let us consider the individual terms.

Observe that hypotheses ($V1$) implies
\begin{align}\label{I2<}
I_2&=\int_{\mathbb{R}^{N}}V(y)|u(0,y)|^2\leq C_1\int_{\mathbb{R}^{N}}|u(0,y)|^2
\end{align}
and also
\begin{align}\label{I2>}
I_2&=\int_{\mathbb{R}^{N}}(V(y)+V_0)|u(0,y)|^2-V_0\int_{\mathbb{R}^{N}}|u(0,y)|^2\nonumber\\
&\geq -V_0\int_{\mathbb{R}^{N}}|u(0,y)|^2\geq -\frac{V_0}{\mathcal{K}(\Phi_{\sigma})}\|u\|^2_\sigma,
\end{align}
as consequence of the immersion \eqref{immersion1b}.

Substituting the last inequality into \eqref{energy} we obtain
\begin{align*}
\frac{1}{2}\left(I_1(u)+I_2(u)\right)&\geq \frac{1}{2}\iint_{\mathbb{R}^{N+1}_+}\left(|\nabla u|^2+m^2u^2\right)x^{1-2\sigma}-\frac{V_0}{2\mathcal{K}(\Phi_{\sigma})}\|u\|^2\\
&\geq\frac{1}{2}\mathcal{C}\|u\|^2_\sigma-\frac{V_0}{2\mathcal{K}(\Phi_\sigma)}\|u\|^2_\sigma\,,
\end{align*}
where $\mathcal{C}=\min\{1,m^2\}>0$. It follows from hypotheses ($V1$) that there exists a constant $\mathcal{C}'$ such that
\begin{align}\label{I+}\frac{1}{2}\left(I_1(u)+I_2(u)\right)\geq \mathcal{C}'\|u\|^2_\sigma\end{align}
and so $I(u)\geq \mathcal{C}'\|u\|^2-\psi(u)\geq \mathcal{C}'\|u\|^2_\sigma-C\left(\|u\|^2_\sigma+\|u\|^\theta_\sigma\right)^2$, thus implying ($i$) when we choose $\rho>0$ small enough.

In order to prove ($ii$), fix $u\in H^1(\mathbb{R}^{N+1}_+,x^{1-2\sigma})\setminus\{0\}$ such that $u_+\neq 0$. For all $t>0$ consider the function $g_{u}\colon(0,\infty)\to\mathbb{R}$ defined by
\[g_{u}(t)=\Psi\left(\frac{tu(0,y)}{\|u\|}\right)\]
where $\Psi$ was defined before.

An easy calculation shows that
\begin{align*}
g'_{u}(t)&=\frac{2}{t}\int_{\mathbb{R}^{N}}
\left[W*F\left(\frac{tu(0,y)}{\|u\|}\right)\right]\frac{f}{2}\left(\frac{tu(0,y)}{\|u\|}\right)\left(\frac{tu(0,y)}{\|u\|}\right)
\geq\frac{4}{t}g_{u}(t),\end{align*}
the last inequality being a consequence of the Ambrosetti-Rabinowitz inequality. Observe that $g'_{u}(t)>0$ for $t>0$.

Thus, we obtain
\begin{align}\label{H}
\Psi(\tau u)=g_{u}(\tau\|u\|_\sigma)\geq D\left(\tau\|u\|_\sigma\right)^{4}.\end{align}
for a constant $D>0$.

Gathering \eqref{I2<}, \eqref{immersion1b} and \eqref{H}, we have
\begin{align*}I(\tau u)&\leq C\tau^2\|u\|^2_\sigma-D\tau^4\|u\|^4_\sigma
\end{align*}
for a constant $C$. Thus, it suffices to take $e=\tau u$ for any $u_+\neq 0$ and $\tau$ large enough.
$\hfill\Box$\end{proof}\vspace*{.4cm}

Results from the mountain pass theorem without the PS condition the existence of a Palais-Smale sequence $(u_n)\subset H^1(\mathbb{R}^{N+1}_+,x^{1-2\sigma})$ such that
\[I'(u_n)\to 0\qquad\textrm{and}\qquad I(u_n)\to c,\]
where
\[c=\inf_{\alpha\in \Gamma}\max_{t\in [0,1]}I(\alpha(t)),\]
and $\Gamma=\left\{\alpha\in C^1\left([0,1],H^1(\mathbb{R}^{N+1}_+,x^{1-2\sigma})\right)\,:\,\alpha(0)=0,\,\alpha(1)=e<0\right\}$. \vspace*{.2cm}

We now consider the Nehari manifold
\begin{align*}\mathcal{N}&=\left\{u\in H^1(\mathbb{R}^{N+1}_+,x^{1-2\sigma})\setminus\{0\}\,:\,\langle I'(u), u\rangle=0\right\}.\\
\end{align*}

The next result follows immediately from our estimates:
\begin{lemma}\label{lN}
There exists $\beta>0$ such that $\|u\|_\sigma\geq \beta$ for all $u\in \mathcal{N}$.
\end{lemma}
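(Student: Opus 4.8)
The plan is to establish a positive uniform lower bound on the norm of elements of the Nehari manifold $\mathcal{N}$ directly from the Nehari constraint $\langle I'(u),u\rangle=0$ together with the estimates already developed. For $u\in\mathcal{N}$ we write out the constraint:
\[
\iint_{\mathbb{R}^{N+1}_+}\!\!\left(|\nabla u|^2+m^2u^2\right)x^{1-2\sigma}+\int_{\mathbb{R}^N}V(y)|u(0,y)|^2=\int_{\mathbb{R}^N}\big[W*F(u(0,y))\big]f(u(0,y))u(0,y).
\]
The left-hand side is exactly $I_1(u)+I_2(u)$, which by \eqref{I+} is bounded below by $2\mathcal{C}'\|u\|_\sigma^2$. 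Hence it suffices to bound the right-hand side above by a quantity of the form $C(\|u\|_\sigma^4+\|u\|_\sigma^{2\theta}+\cdots)$, i.e.\ by a sum of powers of $\|u\|_\sigma$ each strictly larger than $2$; then $2\mathcal{C}'\|u\|_\sigma^2\le C(\|u\|_\sigma^{a_1}+\cdots)$ with all $a_i>2$ forces $\|u\|_\sigma$ away from $0$.

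The main work is therefore the upper estimate for the convolution term $\int_{\mathbb{R}^N}[W*F(u(0,y))]f(u(0,y))u(0,y)$. First I would use $(f1)$–$(f2)$ in the form $|f(t)t|\le\xi t^2+C_\xi t^\theta$ (a direct consequence of \eqref{boundf}), so that $|f(u(0,\cdot))u(0,\cdot)|\le C(|u(0,\cdot)|^2+|u(0,\cdot)|^\theta)$ and likewise $|F(u(0,\cdot))|\le C(|u(0,\cdot)|^2+|u(0,\cdot)|^\theta)$ by \eqref{boundF}. Then, splitting $W=W_1+W_2$ and arguing exactly as in the proof of Lemma \ref{estconv}: for the $W_1\in L^r$ piece apply Proposition \ref{pLieb} (generalized Hardy--Littlewood--Sobolev) with the exponents $\frac1r+\frac2t=2$, $t=\frac{2r}{2r-1}$, and for the $W_2\in L^\infty$ piece take $t=1$ and use Hausdorff--Young / Young's inequality trivially. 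In both cases the outcome is a bound by $C(|u(0,\cdot)|_2^2+|u(0,\cdot)|_\theta^\theta)^2$, and then by the trace--Sobolev inequalities \eqref{immersion1aa} (valid because $(W_h)$ guarantees $t\theta\le 2^*_\sigma$) this is $\le C(\|u\|_\sigma^2+\|u\|_\sigma^\theta)^2$. Expanding the square gives a sum of the monomials $\|u\|_\sigma^4$, $\|u\|_\sigma^{2+\theta}$, $\|u\|_\sigma^{2\theta}$, each with exponent $>2$ since $\theta>2$.

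Combining, for every $u\in\mathcal{N}$ we get
\[
2\mathcal{C}'\|u\|_\sigma^2\le I_1(u)+I_2(u)=\int_{\mathbb{R}^N}\big[W*F(u(0,y))\big]f(u(0,y))u(0,y)\le C\left(\|u\|_\sigma^2+\|u\|_\sigma^\theta\right)^2,
\]
so $2\mathcal{C}'\le C(\|u\|_\sigma+\|u\|_\sigma^{\theta-1})^2\big/\|u\|_\sigma^{\,0}$—more carefully, dividing by $\|u\|_\sigma^2>0$ yields $2\mathcal{C}'\le C(\|u\|_\sigma+\|u\|_\sigma^{\theta-1})^2$, and since $\theta-1>1$ the right-hand side tends to $0$ as $\|u\|_\sigma\to 0^+$. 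Hence there is $\beta>0$, depending only on $\mathcal{C}'$, $C$ and $\theta$, with $\|u\|_\sigma\ge\beta$ for all $u\in\mathcal{N}$. I do not anticipate a genuine obstacle here: everything reduces to the coercivity estimate \eqref{I+} plus the already-proved convolution bound, the only point requiring care being that the Nehari identity involves $f(u)u$ rather than $F(u)$, which is handled by the same Hardy--Littlewood--Sobolev argument once one notes $|f(t)t|\le C(t^2+t^\theta)$.
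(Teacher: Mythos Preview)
Your argument is correct and is precisely the ``standard'' filling-in that the paper leaves implicit: the paper itself gives no proof of Lemma~\ref{lN}, merely stating that ``the next result follows immediately from our estimates.'' Your use of the coercivity bound \eqref{I+} on the left of the Nehari identity, together with a Hardy--Littlewood--Sobolev estimate for the convolution term parallel to Lemma~\ref{estconv} (with $f(t)t$ in place of $F(t)$, both controlled by $C(t^2+t^\theta)$), is exactly what those ``estimates'' are.
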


An alternative characterization of $c$ is given in terms of the Nehari manifold: there exists a unique $t_u=t(u)>0$ such that $\Phi'_u(t)>0$ for $t<t_u$ and $\Phi'_u(t)<0$ for $t>t_u$. Furthermore, $\Phi'_u(t_u)=0$ implies that $t_uu\in \mathcal{N}$. The map $u\mapsto t_u$ ($u_+\neq 0$) is continuous and $c=c^*$, where
\[c^*=\inf_{u\in H^1(\mathbb{R}^{N+1}_+,x^{1-2\sigma})\setminus\{0\}}\max_{t\geq 0} I(tu).\] For details, see \cite[Section 3]{Rabinowitz} or \cite{Felmer}.

Standard arguments prove the next affirmative:
\begin{lemma}\label{boundedseq}
Let $(u_n)\subset H^1(\mathbb{R}^{N+1}_+,x^{1-2\sigma})$ be a sequence such that $I(u_n)\to c^*$ and $I'(u_n)\to 0$, where
\[c^*=\inf_{u\in H^1(\mathbb{R}^{N+1}_+,x^{1-2\sigma})\setminus\{0\}}\max_{t\geq 0}I(tv).\]
Then $(u_n)$ is bounded and $($for a subsequence$)$ $u_n\rightharpoonup u$ in $H^1(\mathbb{R}^{N+1}_+,x^{1-2\sigma})$.
\end{lemma}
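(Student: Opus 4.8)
The plan is to establish the two conclusions of Lemma \ref{boundedseq} in order: first boundedness of $(u_n)$, then weak convergence of a subsequence. For boundedness, I would combine the mountain pass level information $I(u_n)\to c^*$ with the asymptotic condition $I'(u_n)\to 0$ in the standard way. Concretely, consider the combination $I(u_n)-\tfrac12\langle I'(u_n),u_n\rangle$. The quadratic terms $\tfrac12 I_1$ and $\tfrac12 I_2$ cancel against their counterparts in $\tfrac12\langle I'(u_n),u_n\rangle$, leaving only a nonlinear remainder of the form $\tfrac12\int_{\mathbb{R}^N}[W*F(u_n(0,\cdot))]f(u_n(0,\cdot))u_n(0,\cdot)-\int_{\mathbb{R}^N}[W*F(u_n(0,\cdot))]F(u_n(0,\cdot))$. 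By the Ambrosetti--Rabinowitz-type inequality $2F(t)\leq tf(t)$ (which follows from ($f3$), as noted after ($f3$) in the introduction) and the nonnegativity of $W$, this remainder is nonnegative; in fact, writing $\tfrac12 tf(t)-F(t)\geq 0$ and exploiting that $\tfrac14 tf(t)-\tfrac12 F(t)\geq \tfrac12 F(t)\cdot 0$ is not quite enough, so I would instead use the combination $I(u_n)-\tfrac14\langle I'(u_n),u_n\rangle$ to extract a genuinely coercive term. That produces $\tfrac14(I_1(u_n)+I_2(u_n))$ plus a nonnegative nonlinear term bounded below, and by \eqref{I+} we get $\tfrac14\cdot 2\mathcal{C}'\|u_n\|_\sigma^2\leq I(u_n)-\tfrac14\langle I'(u_n),u_n\rangle\leq c^*+1+\tfrac14\|I'(u_n)\|\,\|u_n\|_\sigma$ for $n$ large, i.e. $\|u_n\|_\sigma^2\leq C(1+\|u_n\|_\sigma)$, which forces $(\|u_n\|_\sigma)$ to be bounded.

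Let me be more careful about the exact multiplier. Since $f(t)/t$ is increasing and $F(0)=0$, one has $2F(t)\le tf(t)$, so $\tfrac12\int[W*F(u_n)]f(u_n)u_n-\int[W*F(u_n)]F(u_n)\ge 0$. Hence $I(u_n)-\tfrac12\langle I'(u_n),u_n\rangle = \tfrac12\int[W*F(u_n)]f(u_n)u_n-\int[W*F(u_n)]F(u_n)\ge 0$; this gives a sign but no coercivity. To get coercivity I use $\tfrac14$: $I(u_n)-\tfrac14\langle I'(u_n),u_n\rangle=\tfrac14\big(I_1(u_n)+I_2(u_n)\big)+\int[W*F(u_n)]\big(\tfrac14 u_nf(u_n)-\tfrac12 F(u_n)\big)$, and the last integrand is $\ge 0$ pointwise because $\tfrac14 tf(t)-\tfrac12 F(t)=\tfrac14(tf(t)-2F(t))\ge 0$. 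Combined with \eqref{I+}, which gives $\tfrac12(I_1+I_2)\ge 2\mathcal{C}'\|u\|_\sigma^2$ hence $\tfrac14(I_1+I_2)\ge \mathcal{C}'\|u\|_\sigma^2$, we obtain $\mathcal{C}'\|u_n\|_\sigma^2\le I(u_n)-\tfrac14\langle I'(u_n),u_n\rangle\le c^*+o(1)+o(1)\|u_n\|_\sigma$, which yields the bound.

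Once $(u_n)$ is bounded in the Hilbert space $H^1(\mathbb{R}^{N+1}_+,x^{1-2\sigma})$, reflexivity gives a weakly convergent subsequence $u_n\rightharpoonup u$; this is the trivial second half of the statement. I would remark that no compactness of the embedding is claimed here (the full space embedding into $L^q(\mathbb{R}^N)$ is only continuous), so the lemma is purely a coercivity-plus-reflexivity statement and nothing more needs to be said. The only genuine obstacle is selecting the right multiplier $\tfrac14$ rather than the naive $\tfrac12$: with $\tfrac12$ the nonlinear terms cancel completely and one loses control, so one must keep a fraction of the quadratic part alive while still keeping the nonlinear remainder nonnegative via the Ambrosetti--Rabinowitz inequality and $W\ge 0$. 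Everything else — the cancellation bookkeeping and the appeal to \eqref{I+} and \eqref{immersion1b} — is routine.
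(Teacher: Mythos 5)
Your argument is correct and is precisely the ``standard argument'' the paper invokes without proof. The key observation you make explicit --- that the plain Ambrosetti--Rabinowitz inequality $2F(t)\le tf(t)$ forces one to take the multiplier $1/4$ rather than $1/2$, because the quadratic structure of $\Psi(u)=\tfrac12\int[W*F(u)]F(u)$ in $F$ effectively upgrades the AR condition to $\langle\Psi'(u),u\rangle\ge 4\Psi(u)$ --- is exactly the point, and then coercivity follows from \eqref{I+} together with $W\ge0$, $F\ge 0$. (Minor slip: \eqref{I+} gives $\tfrac12(I_1+I_2)\ge\mathcal{C}'\|u\|_\sigma^2$, not $2\mathcal{C}'\|u\|_\sigma^2$, so the lower bound is $\tfrac14(I_1+I_2)\ge\tfrac12\mathcal{C}'\|u\|_\sigma^2$; this changes only a constant and does not affect the conclusion.)
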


\begin{lemma}\label{lK}
Let $U\subseteqq \mathbb{R}^{N}$ be any open set. For $1<p<\infty$, let $(f_n)$ be a bounded sequence in $L^p(U)$ such that $f_n(z)\to f(z)$ a.e. in $\mathbb{R}^{N}$. Then $f_n\rightharpoonup f$.
\end{lemma}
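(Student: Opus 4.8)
The plan is to recognize this as the classical fact that an $L^p$-bounded sequence converging almost everywhere converges weakly to the a.e.\ limit, and to prove it by a standard two-step argument: first identify the candidate weak limit, then use a density/approximation argument with the Dunford–Pettis theory (or, more elementarily, with Egorov's theorem) to test against a dense class of functionals. First I would note that, since $1<p<\infty$, the space $L^p(U)$ is reflexive, so the bounded sequence $(f_n)$ has a weakly convergent subsequence $f_{n_k}\rightharpoonup g$ in $L^p(U)$ for some $g\in L^p(U)$. The whole point is then to show $g=f$, after which a subsequence argument (every subsequence of $(f_n)$ has a further subsequence converging weakly to $f$) upgrades this to weak convergence of the full sequence.

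To identify $g$ with $f$, I would test the weak convergence against characteristic functions of sets of finite measure. Fix a measurable set $E\subseteq U$ with $|E|<\infty$; then $\mathbf{1}_E\in L^{p'}(U)$ where $p'=p/(p-1)$, so $\int_E f_{n_k}\to \int_E g$. On the other hand, on a set of finite measure the a.e.\ convergence $f_{n_k}\to f$ together with the $L^p$-bound (hence uniform integrability on $E$, by boundedness in $L^p$ with $p>1$ and Hölder on $E$) gives, via Vitali's convergence theorem, $\int_E f_{n_k}\to\int_E f$. Therefore $\int_E f=\int_E g$ for every finite-measure set $E$, which forces $f=g$ a.e.\ in $U$. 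One subtlety worth a line: the hypothesis says $f_n(z)\to f(z)$ a.e.\ in $\mathbb{R}^N$, and we should read $f$ as (the restriction of) a function on $U$; the limit $f$ is automatically measurable, and one must check $f\in L^p(U)$, which follows from Fatou's lemma applied to $|f_{n_k}|^p$, giving $\|f\|_p\le\liminf\|f_{n_k}\|_p<\infty$.

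Finally, to pass from the subsequence to the whole sequence I would argue by contradiction in the standard way: if $f_n\not\rightharpoonup f$, there is a functional $\varphi\in L^{p'}(U)$, an $\varepsilon>0$, and a subsequence along which $|\int_U(f_n-f)\varphi|\ge\varepsilon$; but this subsequence is still $L^p$-bounded and still converges a.e.\ to $f$, so by the argument above it has a further subsequence converging weakly to $f$, contradicting the lower bound $\varepsilon$. I expect the only mildly delicate point to be the justification of $\int_E f_{n_k}\to\int_E f$ on finite-measure sets — i.e.\ verifying the uniform integrability needed for Vitali's theorem — but this is immediate from $\sup_n\|f_n\|_{L^p(U)}<\infty$ and Hölder's inequality, since for $A\subseteq E$ one has $\int_A|f_n|\le\|f_n\|_p\,|A|^{1/p'}$, which is small uniformly in $n$ when $|A|$ is small. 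No genuine obstacle arises; the lemma is a packaging of well-known measure-theoretic facts, stated here because it will be invoked repeatedly to pass to the limit in the nonlocal (convolution) terms.
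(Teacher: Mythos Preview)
Your argument is correct and is one of the standard proofs of this classical fact. The paper does not give its own proof of Lemma~\ref{lK}; it simply cites \cite[Lemme 4.8, Chapitre 1]{Kavian}, so there is nothing to compare against beyond noting that your reflexivity plus Vitali/uniform-integrability identification of the weak limit, followed by the usual sub-subsequence argument, is precisely the kind of proof one finds in that reference.
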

The proof of Lemma \ref{lK} can be found, e.g., in \cite[Lemme 4.8, Chapitre 1]{Kavian}.\vspace*{.2cm}

We denote by $c_\infty$ the mountain pass level attached to problem \eqref{P} with $V(y)$ changed by the constant potential $V_\infty>0$. Precisely,
\[c_\infty=\inf_{u\in H^1(\mathbb{R}^{N+1}_+,x^{1-2\sigma})\setminus\{0\}}\max_{t\geq 0} I_\infty(tu),\]
where
\begin{align*}I_\infty(u) =&\frac{1}{2}\iint_{\mathbb{R}^{N+1}_+}\left(|\nabla u|^2+m^2u^2\right)x^{1-2\sigma}+\frac{1}{2}\int_{\mathbb{R}^{N}}V_\infty[u(0,y)]^2\nonumber\\
&\quad-\frac{1}{2}\int_{\mathbb{R}^{N}}\left[W*F(u(0,y))\right]F(u(0,y)).
\end{align*}
(This characterization of the mountain pass level attached to the potential $V_\infty$ is obtained as before, introducing the Nehari manifold.)
\begin{proposition}\label{asymptotic}
Assuming that hypotheses \textup{(f1), (f2), (f3)} and \textup{$(W_h)$} are valid, for $(x,y)\in (0,\infty)\times \mathbb{R}^N= \mathbb{R}^{N+1}_+$ the problem
		\begin{equation}
		\left\{\begin{array}{ll}
		\Delta_y u+\frac{1-2\sigma}{x}u_x+u_{xx}-m^2u=0 &\text{in }\ \mathbb{R}^{N+1}_+ \\ \\
		\displaystyle\lim_{x\to 0^+}\left(-x^{1-2\sigma}\frac{\partial u}{\partial x}\right)= -V_\infty u+ \left[W*F(u)\right]f(u), &\text{in}\  \left\{ 0\right\} \times \mathbb{R}^N \simeq\mathbb{R}^N,
		\end{array}\right.\tag{$P_\infty$}\label{Pinfty}
		\end{equation}	
has a positive ground state solution for any constant potential $V_\infty>0$.
\end{proposition}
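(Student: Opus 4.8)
The plan is to run the standard mountain-pass argument for the autonomous functional $I_\infty$, the only nontrivial point being the recovery of compactness, which one obtains via concentration-compactness since the problem is translation invariant. First I would record that $I_\infty$ satisfies the mountain-pass geometry: the estimates in Lemma~\ref{gpm} go through verbatim with $V$ replaced by the constant $V_\infty>0$ (in fact the quadratic part is now $\geq \mathcal{C}\|u\|_\sigma^2$ directly, with no need of $(V1)$ since $V_\infty>0$), so there are $\rho,\delta>0$ with $I_\infty|_S\geq\delta$ and an $e$ with $I_\infty(e)<0$. Hence there is a Palais--Smale sequence $(u_n)$ at the level $c_\infty$, and by the argument of Lemma~\ref{boundedseq} (using $(f3)$ through the Ambrosetti--Rabinowitz inequality $2F(t)\leq tf(t)$) the sequence is bounded in $H^1(\mathbb{R}^{N+1}_+,x^{1-2\sigma})$; passing to a subsequence, $u_n\rightharpoonup u$. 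One also checks $c_\infty>0$, so the limit must be nonzero.

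Next I would show the weak limit $u$ is a weak solution of \eqref{Pinfty}. For a fixed test function $\varphi$, the linear terms pass to the limit by weak convergence, and for the nonlinear term one uses the compact local immersion \eqref{immersion3} on balls together with the growth bounds \eqref{boundf}--\eqref{boundF}, Remark~\ref{obs1} and Lemma~\ref{lK}: $u_n(0,\cdot)\to u(0,\cdot)$ in $L^q_{loc}$ and a.e., hence $F(u_n(0,\cdot))\to F(u(0,\cdot))$ and $f(u_n(0,\cdot))\to f(u(0,\cdot))$ in the appropriate $L^t$-spaces, and the Hardy--Littlewood--Sobolev/Hausdorff--Young estimates of Proposition~\ref{pLieb} and Proposition~\ref{HYoung} (exactly as in the proof of Lemma~\ref{estconv}) give convergence of $\big[W*F(u_n(0,\cdot))\big]f(u_n(0,\cdot))$ tested against $\varphi$. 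Thus $I_\infty'(u)=0$.

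The heart of the proof is that $u\neq 0$; equivalently, no vanishing occurs. Since $I_\infty$ is invariant under translations $u(x,y)\mapsto u(x,y+z)$, $z\in\mathbb{R}^N$, I would invoke the Lions vanishing lemma: if
\[
\sup_{z\in\mathbb{R}^N}\int_{B_R(z)}|u_n(0,y)|^2\,\dd y\to 0
\]
for some (hence every) $R>0$, then $u_n(0,\cdot)\to 0$ in $L^q(\mathbb{R}^N)$ for all $q\in(2,2^*_\sigma)$, which forces $\Psi(u_n)\to 0$ and, combined with $\langle I_\infty'(u_n),u_n\rangle\to 0$, gives $\|u_n\|_\sigma\to 0$, contradicting the mountain-pass geometry (Lemma~\ref{lN} applies to $\mathcal{N}_\infty$ as well). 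Hence vanishing fails, so after translating $u_n$ by a suitable sequence $(z_n)\subset\mathbb{R}^N$ — which changes neither $I_\infty(u_n)$ nor $I_\infty'(u_n)$ — the new weak limit $u$ is nonzero and, by the previous paragraph, a nontrivial critical point of $I_\infty$. Finally, the usual Nehari/Fatou computation
\[
I_\infty(u)=I_\infty(u)-\tfrac12\langle I_\infty'(u),u\rangle=\int_{\mathbb{R}^N}\big[W*F(u(0,\cdot))\big]\Big(\tfrac12 f(u(0,\cdot))u(0,\cdot)-F(u(0,\cdot))\Big)\geq 0,
\]
together with the weak lower semicontinuity of the corresponding functional along $(u_n)$, yields $I_\infty(u)\leq\liminf I_\infty(u_n)=c_\infty\leq\max_{t\geq0}I_\infty(tu)=I_\infty(u)$, so $u$ realizes $c_\infty$ and is a ground state. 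Positivity follows because $f(t)=0$ for $t<0$ (so $u_-$ is admissible and $u\geq0$) and then from the Hopf-type principle proved in the Appendix; smoothness/strict positivity in $\overline{\mathbb{R}^{N+1}_+}$ is deferred to Theorems~\ref{t2}--\ref{t3}. The main obstacle is precisely this compactness step: because $V_\infty$ is constant the functional is translation invariant and one genuinely needs the concentration-compactness dichotomy (ruling out vanishing) rather than a direct compact embedding.
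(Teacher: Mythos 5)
Your proposal follows essentially the same strategy as the paper's: mountain-pass geometry for the autonomous functional $I_\infty$, a bounded Palais--Smale sequence, Lions' vanishing alternative to produce translations $(z_n)$ (using that $I_\infty$ and $I'_\infty$ are translation invariant), and passage to the weak limit of the translated sequence to obtain a nontrivial element of $\mathcal{N}_\infty$. The one technical variation is that the paper tests $I_\infty'(w_n)$ against $(w_n-w)\varphi$, invokes Ackermann's Lemma 3.5 to dispose of the nonlocal term, and deduces a.e.\ convergence of gradients, whereas you go directly through the compact local embeddings of \eqref{immersion3}, Lemma~\ref{lK}, and the Hardy--Littlewood--Sobolev/Hausdorff--Young estimates; both routes are standard and land in the same place. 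One local slip worth correcting: in your first paragraph you assert that ``$c_\infty>0$, so the limit must be nonzero'' --- this does not follow (a Palais--Smale sequence can vanish while its energy stays at $c_\infty>0$), and indeed your own third paragraph gives the correct argument via non-vanishing and translation, so that sentence should simply be dropped and the nontriviality deferred to that step.
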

\begin{proof} Let $(u_n)$ be the minimizing sequence given by Lemma \ref{gpm} in the case of $V_\infty$ instead of $V(y)$. That is,
\[I'_\infty(u_n)\to 0\qquad\textrm{and}\qquad I_\infty(u_n)\to c_\infty.\]
Then, there exist $R,\delta>0$ and a sequence $(z_n)\subset\mathbb{R}^{N}$ such that \begin{equation}\label{Lions}\liminf_{n\to\infty}\int_{B_R(z_n)}|u_n(0,y)|^2\geq \delta.\end{equation}
If false, a result obtained by Lions (see \cite{CC}) guarantees that $u_n(0,\cdot)\to 0$ in $L^q(\mathbb{R}^{N})$ for $2<q<2^*_\sigma$, thus implying that
\[\int_{\mathbb{R}^N} \left[W*F(u_n(0,y))\right]f(u_n(0,y))u_n(0,y)  \to 0,\] contradicting Lemma \ref{lN}.

We define
\[w_n(x,y)=u_n(x,y-z_n)
.\]
From \eqref{Lions} we derive that
\[\int_{B_R(0)}|w_n(0,y)|^2\geq \frac{\delta}{2}.\]

Since $I_\infty$ and $I'_\infty$ are both invariant by translation, it also holds that
\[I'_\infty(w_n) \to 0\quad\textrm{ and }\quad I_\infty(w_n) \to c^*.\]

It follows, as before, that $(w_n)$ is bounded and therefore, $w_n\rightharpoonup w$ for a subsequence. A standard reasoning proves that $w_n(z)\to w(z)$ a.e. in  $(\mathbb{R}^{N+1}_+)$, $w_n\to w$ in $L^s_{loc}(\mathbb{R}^{N+1}_+)$ for all $s\in [2,2^*)$, $w_n(0,y)\to w(0,y)$ a.e. in $(\mathbb{R}^{N})$
and $w_n(0,\cdot)\to w(0,\cdot)$ in $L^q_{loc}(\mathbb{R}^{N})$, for all $q\in [2,2^{*}_\sigma)$.

For $\varphi \in C^\infty_0(\mathbb{R}^{N+1}_+)$ arbitrary, let us consider $\psi_n=(w_n - w)\varphi\in H^1(\mathbb{R}^{N+1}_+)$. We have
\begin{align}\label{testfunction}
\langle I'_\infty(w_n) , \psi_n\rangle&= \iint_{\mathbb{R}^{N+1}_+} \nabla w_n \cdot \nabla\psi_n\,x^{1-2\sigma}+ \iint_{\mathbb{R}^{N+1}_+}m^2w_n\psi_n\,x^{1-2\sigma}\nonumber\\
{}&\qquad+ \int_{\mathbb{R}^N} V_\infty w_n(0,y)\psi_n(0,y) \nonumber \\
{}&\qquad-\int_{\mathbb{R}^N}\left[W * F(w_n(0,y))\right] f(w_n(0,y))) \psi_n(0,y)\nonumber\\
&=J_1+J_2+J_3-J_4.
\end{align}
We start considering
\begin{align*}
J_4=\int_{\mathbb{R}^N}\left[W * F(w_n(0,y))\right]f(w_n(0,y))\psi_n(0,y).
\end{align*}
Because $\displaystyle\lim_{n\to\infty}\langle I'_\infty(w_n) , (w_n - w)\varphi \rangle=0$, it follows from \cite[Lemma 3.5]{Ackermann} that $J_4\to 0$ when $n\to\infty$ and thus is easily verified that $J_2+J_3-J_4\to 0$ when $n\to\infty$.

We now consider $J_1$:
\begin{align*}
J_1&=\iint_{\mathbb{R}^{N+1}_+}\nabla w_n \cdot \nabla ( (w_n-w)\varphi)\,x^{1-2\sigma}\\
&=\iint_{\mathbb{R}^{N+1}_+} \nabla w_n \cdot  \varphi\nabla (w_n - w)\,x^{1-2\sigma} +
\iint_{\mathbb{R}^{N+1}_+} \nabla w_n\cdot(w_n-w) \nabla \varphi\,x^{1-2\sigma}\\
&=\iint_{\mathbb{R}^{N+1}_+}\left(|\nabla(w_n-w)|^2\varphi+\varphi \nabla v\cdot \nabla(w_n-w)+\nabla w_n\cdot(w_n-w) \nabla \varphi\right)x^{1-2\sigma}.
\end{align*}
We infer that
\begin{align*}\lim_{n\to\infty}\iint_{\mathbb{R}^{N+1}_+} |\nabla(w_n-w)|^2 \varphi\,x^{1-2\sigma} &=-\lim_{n\to\infty} \iint_{\mathbb{R}^{N+1}_+}
\varphi\nabla w \cdot\nabla(w_n-w)\,x^{1-2\sigma} \\
&\qquad -  \lim_{n\to\infty}\iint_{\mathbb{R}^{N+1}_+} (w_n-w)\nabla w_n \cdot\nabla
\varphi\,x^{1-2\sigma}.\end{align*}
Since
\begin{align*}
\lim_{n\to\infty} \iint_{\mathbb{R}^{N+1}_+} \varphi\nabla w\cdot \nabla( w_n - w)\,x^{1-2\sigma} &=0\\
\intertext{and}
\lim_{n\to\infty}\iint_{\mathbb{R}^{N+1}_+} (w_n - w)\nabla w_n \cdot \nabla \varphi\,x^{1-2\sigma} &=0
\end{align*}
(because $\nabla w_n$ is bounded), we deduce that
\[ \nabla w_n \rightarrow \nabla w \quad \ \mbox{a.e. in} \quad \mathbb{R}^{N+1}_+.\]
Thus
\[\langle I'_\infty(w),w\rangle =0\]
and $w \in \mathcal{N}$.

We now turn our attention to the positivity of $w$. Since
\begin{multline*}\iint_{\mathbb{R}^{N+1}_+}\left(\nabla w\cdot \nabla\varphi+m^2w\varphi\right)x^{1-2\sigma}+\int_{\mathbb{R}^{N}}V_\infty w(0,y)\varphi(0,y)\\
=\int_{\mathbb{R}^{N}}\left[W*F(w(0,y))\right]f(w(0,y))\varphi(0,y),\end{multline*}
choosing $\varphi=w_-$, the right-hand side of the equality is positive by equations \eqref{energy} and \eqref{I+}, since $J_1+J_2+J_3\geq K\|w\|^2_\sigma$),  while $\Psi(w)=J_4\leq 0$. 

By Theorem \ref{t1} we have $w(x,y)\geq 0$. Taking a compact $K\subset H^1(\mathbb{R}^{N+1}_+,x^{1-2\sigma})$, we apply \cite[Theorem 8.20, Corollary 8.21]{GT} to conclude that $w$ is strictly positive. An alternative proof follows by noting that the weight $x^{1-2\sigma}$ satisfies condition (1.2) in Gutierrez \cite{Gutierrez} and therefore the positivity of $w$ follows from Theorems 3.1 and 4.5 in that paper. We are done.
$\hfill\Box$\end{proof}\vspace*{.3cm}

In order to consider the general case of the potential $V(y)$, we state a well-known result due to M. Struwe, with the notation adapted to our case:
\begin{lemma}[Splitting Lemma]\label{Struwe} Let $(u_n)\subset H^1(\mathbb{R}^{N+1}_+,x^{1-2\sigma})$ be such that
	\[I(u_n)\to c,\qquad I'(u_n)\to 0\]
	and $u_n\rightharpoonup u$ weakly on $X$. Then $I'(u)=0$ and we have \emph{either}
	\begin{enumerate}
		\item [($i$)] $u_n\to u$ strongly on $X$;
		\item [($ii$)] there exist $k\in\mathbb{N}$, $(y^j_n)\in\mathbb{R}^N$ such that $|y^j_n|\to\infty$ for $j\in \{1,\ldots,k\}$ and nontrivial solutions $u^1,\ldots,u^k$ of problem \eqref{Pinfty} so that
		\[I(u_n)\to I(u_0)+\sum_{j=1}^k I_\infty (u_j)\]
		and 
		\[\left\|u_n-u_0-\sum_{j=1}^ku^j(\cdot-y^j_n)\right\|\to 0.\]
	\end{enumerate}
	
\end{lemma}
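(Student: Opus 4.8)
The plan is to follow the classical concentration–compactness scheme of Struwe, adapted to the weighted Sobolev space $X = H^1(\mathbb{R}^{N+1}_+, x^{1-2\sigma})$ and to the nonlocal nonlinearity $[W*F(u)]f(u)$. First I would show that the weak limit $u_0$ of the Palais–Smale sequence $(u_n)$ is itself a critical point of $I$: this follows by testing $I'(u_n)$ against $\varphi\in C^\infty_0(\mathbb{R}^{N+1}_+)$, using the a.e.\ convergence $u_n(0,\cdot)\to u_0(0,\cdot)$, the local compactness from \eqref{immersionloc}, and the continuity of the convolution term (here one invokes the Hausdorff--Young and Hardy--Littlewood--Sobolev estimates of Propositions \ref{HYoung} and \ref{pLieb}, exactly as in Lemma \ref{estconv}, together with \cite[Lemma 3.5]{Ackermann} to pass to the limit in $J_4$, as was already done in the proof of Proposition \ref{asymptotic}). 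Set $v^1_n = u_n - u_0$. The Brezis--Lieb type splitting then gives $I(u_n) = I(u_0) + I_\infty(v^1_n) + o(1)$ and $I'_\infty(v^1_n)\to 0$ in $X^*$; the reason the asymptotic functional $I_\infty$ (and not $I$) appears is that $v^1_n\rightharpoonup 0$, so by (V2)--(V3) the potential $V(y)$ seen by the mass of $v^1_n$ escaping to infinity is $V_\infty$ — one splits $\int V|v^1_n(0,\cdot)|^2 = \int (V-V_\infty)|v^1_n(0,\cdot)|^2 + \int V_\infty|v^1_n(0,\cdot)|^2$ and shows the first integral is $o(1)$ because $V-V_\infty\to 0$ at infinity and $v^1_n\to 0$ in $L^2_{loc}(\mathbb{R}^N)$.

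Next comes the iteration. If $v^1_n\to 0$ strongly in $X$ we are in case $(i)$ and we stop. Otherwise, a vanishing–nonvanishing dichotomy à la Lions applied to $|v^1_n(0,\cdot)|^2$ shows nonvanishing cannot fail (vanishing plus $I'_\infty(v^1_n)\to 0$ would force $\|v^1_n\|_\sigma\to 0$ via the estimates in Lemma \ref{gpm} and \ref{lN}, contradicting $v^1_n\not\to 0$), so there are $(y^1_n)\subset\mathbb{R}^N$ and $R,\delta>0$ with $\int_{B_R(y^1_n)}|v^1_n(0,y)|^2\,\dd y\geq\delta$. Since $v^1_n\rightharpoonup 0$ we must have $|y^1_n|\to\infty$. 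Defining $\tilde v^1_n(x,y) = v^1_n(x, y+y^1_n)$, translation invariance of $I_\infty$ gives $I_\infty(\tilde v^1_n)\to c - I(u_0)$, $I'_\infty(\tilde v^1_n)\to 0$, and $\tilde v^1_n\rightharpoonup u^1\neq 0$ in $X$; as in Proposition \ref{asymptotic} one shows $I'_\infty(u^1)=0$, so $u^1$ is a nontrivial solution of \eqref{Pinfty}. Now set $v^2_n = v^1_n - u^1(\cdot - y^1_n) = u_n - u_0 - u^1(\cdot - y^1_n)$ and repeat. At each stage one records $I(u_n) = I(u_0) + \sum_{j=1}^k I_\infty(u^j) + I_\infty(v^{k+1}_n) + o(1)$ with $I'_\infty(v^{k+1}_n)\to 0$, together with $\|u_n - u_0 - \sum_{j=1}^k u^j(\cdot - y^j_n)\| \to 0$ once vanishing occurs.

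Finally, the process must terminate after finitely many steps. Here one uses that every nontrivial critical point $u^j$ of $I_\infty$ lies on the Nehari manifold $\mathcal{N}_\infty$ and hence, by the analogue of Lemma \ref{lN}, satisfies $I_\infty(u^j)\geq \eta$ for a fixed constant $\eta>0$ (this uses the Ambrosetti--Rabinowitz inequality coming from $(f3)$, as in the proof of Lemma \ref{gpm}$(ii)$, to bound $I_\infty$ from below on $\mathcal{N}_\infty$). Since $I(u_n)\to c$ is finite and the partial sums $\sum_{j=1}^k I_\infty(u^j)\geq k\eta$ stay bounded, we get $k\leq c/\eta$, so after at most $\lceil c/\eta\rceil$ extractions the remaining tail converges strongly to $0$, yielding case $(ii)$ (or case $(i)$ if $k=0$). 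The main obstacle I expect is the nonlocal term: establishing the additivity $\int[W*F(v^1_n)]F(v^1_n) = \int[W*F(u_0)]F(u_0) + \int[W*F(\tilde v^1_n)]F(\tilde v^1_n) + o(1)$ under translation and weak convergence — the cross terms must be controlled using the decomposition $W = W_1 + W_2$, the Riesz-type estimates of Proposition \ref{pLieb}, the growth bound \eqref{boundF}, and the fact that $|y^j_n|\to\infty$ so that the supports effectively separate; this is where one leans on \cite[Lemma 3.5]{Ackermann} and the splitting already performed in the proofs of Lemma \ref{estconv} and Proposition \ref{asymptotic}.
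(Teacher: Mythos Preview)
The paper does not actually prove this lemma: it is stated without proof as ``a well-known result due to M. Struwe, with the notation adapted to our case,'' citing \cite{Struwe}. So there is no proof in the paper to compare your attempt against.

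That said, your plan is the standard concentration--compactness scheme and is the right outline: extract the weak limit as a critical point of $I$, show the remainder is a Palais--Smale sequence for $I_\infty$ (using $(V2)$ to replace $V$ by $V_\infty$ on the escaping part), apply Lions' dichotomy to either get strong convergence or locate a bump, translate, extract a nontrivial critical point of $I_\infty$, subtract it, and iterate, terminating because each $I_\infty(u^j)$ is bounded below by a fixed positive constant. Your identification of the main technical obstacle --- the Brezis--Lieb type additivity for the nonlocal term $\int [W*F(u)]F(u)$ and its derivative --- is accurate, and you are right that this is where one must invoke \cite[Lemma 3.5]{Ackermann} together with the growth bounds \eqref{boundf}--\eqref{boundF} and the decomposition $W=W_1+W_2$. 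One small caution: the energy lower bound you need at the termination step is $I_\infty(u^j)\geq c_\infty>0$ (or at least a uniform $\eta>0$), which follows from $u^j\in\mathcal{N}_\infty$ together with the representation analogous to \eqref{Iu0}, not directly from Lemma~\ref{lN} (which bounds $\|u\|_\sigma$ from below, not $I_\infty(u)$).
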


\begin{lemma}\label{PS}
	The functional $I$ satisfies $(PS)_c$ for any $0\leq c<c_\infty$.
\end{lemma}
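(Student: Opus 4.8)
The plan is to prove the Palais–Smale condition below the first energy level of the asymptotic problem by invoking the Splitting Lemma (Lemma \ref{Struwe}) and ruling out the non-compact alternative (ii) by an energy estimate. Let $(u_n)\subset H^1(\mathbb{R}^{N+1}_+,x^{1-2\sigma})$ satisfy $I(u_n)\to c$ with $0\le c<c_\infty$ and $I'(u_n)\to 0$. By Lemma \ref{boundedseq} the sequence is bounded, so up to a subsequence $u_n\rightharpoonup u_0$ weakly. The Splitting Lemma then tells us that $I'(u_0)=0$ and either $u_n\to u_0$ strongly — which is exactly $(PS)_c$ — or there exist $k\ge 1$, sequences $(y^j_n)$ with $|y^j_n|\to\infty$, and nontrivial solutions $u^1,\dots,u^k$ of \eqref{Pinfty} with $I(u_n)\to I(u_0)+\sum_{j=1}^k I_\infty(u^j)$.

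Next I would derive the contradiction in case (ii). The key facts are: (a) each $u^j$ is a nontrivial critical point of $I_\infty$, hence $I_\infty(u^j)\ge c_\infty$, since $c_\infty$ is the ground state (least) energy level for \eqref{Pinfty}, obtained via its Nehari manifold as stated just before Proposition \ref{asymptotic}; and (b) $I(u_0)\ge 0$. For (b), since $I'(u_0)=0$ we have $u_0\in\mathcal N\cup\{0\}$; if $u_0=0$ then $I(u_0)=0$, and if $u_0\ne 0$ then using $\langle I'(u_0),u_0\rangle=0$ and the Ambrosetti–Rabinowitz-type inequality $2F(t)\le tf(t)$ (which follows from $(f3)$, as noted in the introduction) one gets
\[
I(u_0)=I(u_0)-\tfrac14\langle I'(u_0),u_0\rangle\ge \tfrac14\bigl(I_1(u_0)+I_2(u_0)\bigr)\ge \mathcal C'\|u_0\|_\sigma^2\ge 0,
\]
using \eqref{I+}. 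Combining, in case (ii) we would have $c=I(u_0)+\sum_{j=1}^k I_\infty(u^j)\ge 0+k\,c_\infty\ge c_\infty$, contradicting $c<c_\infty$. Hence case (ii) cannot occur, case (i) holds, and $u_n\to u_0$ strongly; that is, $I$ satisfies $(PS)_c$.

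The main obstacle is the clean justification of the two ingredients feeding the energy inequality. First, one must be certain that a \emph{nontrivial} critical point $u^j$ of $I_\infty$ satisfies $I_\infty(u^j)\ge c_\infty$; this is standard once one knows $c_\infty=\inf_{v\in\mathcal N_\infty}I_\infty(v)$ and that every nontrivial critical point lies on the Nehari manifold $\mathcal N_\infty$, which in turn uses that $t\mapsto\max_{t\ge0}I_\infty(tv)$ is attained at the unique Nehari point — exactly the structure recorded for $I$ (and transferable to $I_\infty$) via $(f1)$–$(f3)$. Second, the sign $I(u_0)\ge 0$ rests on \eqref{I+}, which is where hypothesis $(V1)$ enters decisively. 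A minor point to check is that the Splitting Lemma is being applied with the level $c$ possibly negative or zero; but the argument above handles $c\in[0,c_\infty)$ uniformly since it only produces the strict inequality $c\ge c_\infty$ in the bad case. No delicate concentration-compactness analysis is needed here because Lemma \ref{Struwe} has already packaged it; the work is purely the arithmetic of energy levels together with the coercivity estimate \eqref{I+} and the Ambrosetti–Rabinowitz inequality.
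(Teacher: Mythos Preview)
Your proof is correct and follows essentially the same route as the paper: boundedness via Lemma~\ref{boundedseq}, weak convergence, application of the Splitting Lemma~\ref{Struwe}, and exclusion of alternative~(ii) by the energy inequality $c=I(u_0)+\sum_j I_\infty(u^j)\ge k c_\infty\ge c_\infty$. The only cosmetic difference is that the paper verifies $I(u_0)\ge 0$ by writing directly $I(u_0)=\int_{\mathbb{R}^N}[W*F(u_0)]\bigl(\tfrac12 f(u_0)u_0-F(u_0)\bigr)\ge 0$ from $I'(u_0)\cdot u_0=0$, whereas you reach the same conclusion via $I(u_0)-\tfrac14\langle I'(u_0),u_0\rangle$ and \eqref{I+}; note that \eqref{I+} gives $\tfrac14(I_1+I_2)\ge \tfrac12\mathcal{C}'\|u_0\|_\sigma^2$, so your constant is off by a factor of $2$, but this is immaterial.
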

\begin{proof}Let us suppose that $(u_n)$ satisfies
	\[I(u_n)\to c<c_\infty\qquad\text{and}\qquad I'(u_n)\to 0.\]
	By adapting Lemma \ref{boundedseq} to the functional $I_\infty$, we can suppose that the sequence $(u_n)$ is bounded,. Therefore, for a subsequence, we have $u_n\rightharpoonup u$ in $H^1(\mathbb{R}^{N+1}_+)$. It follows from the Splitting Lemma (Lemma \ref{Struwe}) that $I'(u)=0$. Since 
	\begin{align*}
	I'(u)\cdot u&=\iint_{\mathbb{R}^{N+1}_+}\left(|\nabla u|^2+m^2u^2\right)x^{1-2\sigma}+\int_{\mathbb{R}^{N}}V(y)|u(0,y)|^2\\
	&\qquad-\int_{\mathbb{R}^{N}}[W*F(u(0,y))]f(u(0,y))u(0,y)\\
	\intertext{and}
	I(u)&=\frac{1}{2}\iint_{\mathbb{R}^{N+1}_+}\left(|\nabla u_0|^2+m^2u^2_0\right)x^{1-2\sigma}+\frac{1}{2}\int_{\mathbb{R}^{N}}V(y)|\gamma(u_0)|^2\\
	&\qquad-\frac{1}{2}\int_{\mathbb{R}^{N}}[W*F(\gamma(u_0))]F(\gamma(u_0)),
	\end{align*}
	we conclude that 
	\begin{equation}\label{Iu0}
	I(u)=\int_{\mathbb{R}^{N}}[W*F(u(0,y))]\left(\frac{1}{2}f(u(0,y))u(0,y)-F(u(0,y))\right)>0,\end{equation}
	as consequence of the Ambrosetti-Rabinowitz condition.
	
	If $u_n\not\to u$ in $H^1(\mathbb{R}^{N+1}_+,x^{1-2\sigma})$, by applying again the Splitting Lemma we guarantee the existence of $k\in\mathbb{N}$ and nontrivial solutions $u^1,\ldots,u^k$ of problem \eqref{Pinfty} satisfying 
	\[\lim_{n\to\infty}I(u_n)=c=I(u_0)+\sum_{j=1}^kI_\infty(u^j)\geq kc_\infty\geq c_\infty\]
	contradicting our hypothesis. We are done. 
	$\hfill\Box$\end{proof}\vspace*{.2cm}

We prove the next result by adapting the proof given in Furtado, Maia e Medeiros \cite{FMM}:
\begin{lemma}\label{ccinfty} We have	\[0<c^*<c_\infty,\]
	where $c^*$ is characterized in Lemma \ref{boundedseq}.
\end{lemma}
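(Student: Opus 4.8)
The plan is to show strict inequality $c^* < c_\infty$ by exhibiting a test function along which the maximum of the energy $I$ falls strictly below $c_\infty$, exploiting hypothesis $(V3)$, namely $V(y) \le V_\infty$ with $V \not\equiv V_\infty$. First I would take the positive ground state $w_\infty$ of the asymptotic problem \eqref{Pinfty}, which exists by Proposition \ref{asymptotic} and satisfies $I_\infty(w_\infty) = c_\infty$ with $w_\infty \in \mathcal{N}_\infty$. Using the Nehari characterization from the discussion preceding Lemma \ref{boundedseq}, there is a unique $t_0 = t(w_\infty) > 0$ such that $t_0 w_\infty \in \mathcal{N}$ (the Nehari manifold for $I$), and then $c^* \le \max_{t \ge 0} I(t w_\infty) = I(t_0 w_\infty)$.

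The key computation is then
\[
I(t_0 w_\infty) = I_\infty(t_0 w_\infty) + \frac{t_0^2}{2}\int_{\mathbb{R}^N}\big(V(y) - V_\infty\big)|w_\infty(0,y)|^2\,\dd y.
\]
Since $w_\infty > 0$ is strictly positive on $\overline{\mathbb{R}^{N+1}_+}$ (established in the proof of Proposition \ref{asymptotic}), its trace $w_\infty(0,\cdot)$ is positive a.e.; combined with $V(y) \le V_\infty$ and $V \not\equiv V_\infty$ from $(V3)$, the integral $\int_{\mathbb{R}^N}(V(y)-V_\infty)|w_\infty(0,y)|^2\,\dd y$ is \emph{strictly} negative. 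Moreover $I_\infty(t_0 w_\infty) \le \max_{t \ge 0} I_\infty(t w_\infty) = I_\infty(w_\infty) = c_\infty$, because $t \mapsto I_\infty(t w_\infty)$ attains its maximum at $t = 1$ thanks to $w_\infty \in \mathcal{N}_\infty$ and the strict concavity-type behavior coming from $(f3)$ (the Ambrosetti–Rabinowitz inequality). Therefore
\[
c^* \le I(t_0 w_\infty) \le c_\infty + \frac{t_0^2}{2}\int_{\mathbb{R}^N}\big(V(y) - V_\infty\big)|w_\infty(0,y)|^2\,\dd y < c_\infty.
\]
The lower bound $0 < c^*$ is immediate from Lemma \ref{gpm}$(i)$, since $c^* = c \ge \delta > 0$.

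The main obstacle is the interchange of the two maxima: I must be careful that the single number $t_0$ (defined by $t_0 w_\infty \in \mathcal{N}$) simultaneously (a) realizes the max of $I(t w_\infty)$ over $t \ge 0$ and (b) keeps $I_\infty(t_0 w_\infty) \le c_\infty$. Point (a) follows from the uniqueness and monotonicity properties of $t \mapsto \Phi'_{w_\infty}(t)$ recalled before Lemma \ref{boundedseq}; point (b) is the trivial bound $I_\infty(t_0 w_\infty) \le \max_{t\ge 0} I_\infty(t w_\infty)$. A secondary technical point is justifying that the correction term is genuinely negative and bounded away from $0$: this needs both strict positivity of the trace $w_\infty(0,\cdot)$ and the measure-theoretic fact that $\{y : V(y) < V_\infty\}$ has positive measure, which is exactly what $(V3)$ provides together with continuity of $V$. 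Once these are in place the estimate is a one-line consequence, following the scheme of Furtado, Maia and Medeiros \cite{FMM}.
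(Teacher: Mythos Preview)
Your argument is correct, and it is more direct than the paper's. Both proofs start from the ground state $\bar u=w_\infty$ of \eqref{Pinfty} and the projection parameter $t_{\bar u}=t_0$ onto $\mathcal{N}$, but then diverge.

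The paper first proves the auxiliary inequality $t_{\bar u}<1$ (using $(V3)$ together with the monotonicity of $f(s)/s$ and of $F$), then invokes the Nehari-constrained formula
\[
g(t)=\int_{\mathbb{R}^{N}}[W*F(\gamma(t\bar u))]\left(\tfrac{1}{2}f(\gamma(t\bar u))\gamma(t\bar u)-F(\gamma(t\bar u))\right),
\]
checks that $g$ is strictly increasing, and concludes $c^*\le g(t_{\bar u})<g(1)=c_\infty$. Your route bypasses all of this: the identity
\[
I(t_0 w_\infty)=I_\infty(t_0 w_\infty)+\frac{t_0^2}{2}\int_{\mathbb{R}^N}(V-V_\infty)|w_\infty(0,y)|^2
\]
together with the trivial bound $I_\infty(t_0 w_\infty)\le \max_{t\ge 0}I_\infty(tw_\infty)=c_\infty$ gives the conclusion immediately, without ever needing $t_0<1$ or the monotonicity of $g$. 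The paper's approach, on the other hand, extracts the sharper information $t_{\bar u}<1$, which could be of independent interest.

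One small citation detail: strict positivity of the trace $w_\infty(0,\cdot)$ is not established inside the proof of Proposition~\ref{asymptotic} itself (there only interior positivity in $\mathbb{R}^{N+1}_+$ is shown); positivity up to the boundary is obtained later via the Hopf-type principle in the Appendix. Your argument only needs the trace to be positive on a set overlapping $\{V<V_\infty\}$, and this is indeed available in the paper, so the logic is sound.
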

\begin{proof}Let $\bar u\in \mathcal{N}_\infty$ be the weak solution of \eqref{Pinfty} given by Proposition \ref{asymptotic} and $t_{\bar u}>0$ be the unique number such that $t_{\bar u}\bar u\in \mathcal{N}$. We claim that $t_{\bar u}<1$. Indeed,
	\[\int_{\mathbb{R}^{N}}[W*F(\gamma(t_{\bar u}\bar u))]f(\gamma(t_{\bar u}\bar u))\gamma(t_{\bar u}\bar u)\hspace*{7cm}\]
	
	\vspace*{-.5cm}\begin{align*}
	&=t^2_{\bar u}\iint_{\mathbb{R}^{N+1}_+}\left(|\nabla {\bar u}|^2+m^2{\bar u}^2\right)+\int_{\mathbb{R}^{N}}V(y)|\gamma(\bar{u})|^2\\
	&< t^2_{\bar u}\iint_{\mathbb{R}^{N+1}_+}\left(|\nabla {\bar u}|^2+m^2{\bar u}^2\right)+\int_{\mathbb{R}^{N}}V_\infty|\gamma(\bar{u})|^2\\
	&=t^2_{\bar u}\int_{\mathbb{R}^{N}}[W*F(\gamma(\bar u))]f(\gamma(\bar u))\gamma(\bar u)\\
	&=t^2_{\bar u}\left(\int_{\mathbb{R}^{N}}[W*F(\gamma(\bar u))]f(\gamma(\bar u))\gamma(\bar u)+\int_{\mathbb{R}^{N}}[W*F(\gamma(t_{\bar u}\bar u))]f(\gamma(\bar u))\gamma(\bar u)\right.\\
	&\qquad\quad\left.-\int_{\mathbb{R}^{N}}[W*F(\gamma(t_{\bar u}\bar u))]f(\gamma(\bar u))\gamma(\bar u)\right)
	\end{align*}
	thus yielding
	\begin{align*}
	0&>\int_{\mathbb{R}^{N}}[W*F(\gamma(t_{\bar u}\bar u))]\left(\frac{f(\gamma(t_{\bar u}\bar u))}{\gamma(t_{\bar u}\bar u)}-\frac{f(\gamma(\bar u))}{\gamma(\bar u)}\right)\\
	&\qquad +t^2_{\bar u}\int_{\mathbb{R}^{N}}\left[W*\left(F(\gamma(t_{\bar u}\bar u))-F(\gamma(\bar u))\right)\right]f(\gamma(u))\gamma(u).
	\end{align*}
	
	If $t_{\bar u}\geq 1$, since $f(s)/s$ is increasing, the first integral is non-negative and, since $F$ is increasing, the second integral as well. We conclude that $t_{\bar u}<1$.
	
	Lemma \ref{boundedseq} and its previous comments show that 
	\[c\leq \max_{t\geq 0}I(t\bar u)=I(t_{\bar u}\bar u)=\int_{\mathbb{R}^{N}}[W*F(\gamma(t_{\bar u}\bar u))]\left(\frac{1}{2}f(\gamma(t_{\bar u}\bar u))\gamma(t_{\bar u}\bar u)-F(\gamma(t_{\bar u}\bar u))\right).\]
	Since 
	\[g(t)=\int_{\mathbb{R}^{N}}[W*F(\gamma(t\bar u))]\left(\frac{1}{2}f(\gamma(t\bar u))\gamma(t\bar u)-F(\gamma(t\bar u))\right) \]
	is a strictly increasing function, we conclude that
	\[c=g(t_{\bar u})<g(1)=\int_{\mathbb{R}^{N}}[W*F(\gamma(\bar u))]\left(\frac{1}{2}f(\gamma(\bar u))\gamma(\bar u)-F(\gamma(\bar u))\right)=c_\infty,\]
	proving our result. $\hfill\Box$\end{proof}\vspace*{.3cm}

\noindent\textit{Proof of Theorem \ref{t1}}. Let $(u_n)$ be the minimizing sequence given by Lemma \ref{gpm}. It follows from Lemmas \ref{PS} and \ref{ccinfty} that $u_n\to u$ such that $I(u)=c$ and $I'(u)=0$. The positivity of $u$ only reproduces the argument already applied to show the positivity of the solution of the asymptotic problem. 
$\hfill\Box$

\section{Proof of Theorem \ref{t2}}\label{regularity}
We start addressing the regularity of the solution of problem \eqref{P} with some lemmas.
\begin{lemma}\label{hipW} Concerning hypothesis $(W_h)$ we have
	\begin{enumerate}
		\item [($i$)] if $r\in \left(\displaystyle\frac{N}{N(2-\theta)+2\sigma \theta},\frac{2N}{N(2-\theta)+2\sigma \theta}\right]$, there exists $\displaystyle p\in \left[1,\frac{2N}{(N-2\sigma)\theta}\right]$ such that \[|u(0,y)|^\theta\in L^p(\mathbb{R}^{N})\] and
		\[\frac{1}{p}+\frac{1}{r}=1+\frac{N(2-\theta)+2\sigma \theta}{2N}.\]
		Furthermore, $F(u(0,y))\in L^p(\mathbb{R}^{N})$ and
		\[|W_1*F(u(0,y))|=:g\in {L^{2N/[N(2-\theta)+2\sigma \theta]}(\mathbb{R}^{N})}.\]
		\item [($ii$)] if $r'$ denotes the conjugate exponent of $r$ and $r>\displaystyle\frac{2N}{N(2-\theta)+2\sigma \theta}$, then $F(u(0,y))\in L^{r'}(\mathbb{R}^{N})$ and $W_1*F(u(0,y))\in L^\infty(\mathbb{R}^{N})$.
	\end{enumerate}
\end{lemma}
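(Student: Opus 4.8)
The plan is to exploit the Hausdorff--Young inequality (Proposition \ref{HYoung}) together with the embedding \eqref{immersion1aa}, which gives $u(0,\cdot)\in L^q(\mathbb{R}^N)$ for every $q\in[2,2^*_\sigma]$, and the bound \eqref{boundF}, $|F(t)|\le C(t^2+t^\theta)$, which transfers integrability from $u(0,\cdot)$ to $F(u(0,\cdot))$. The key numerical observation is that the exponent threshold $N/[N(2-\theta)+2\sigma\theta]$ appearing in $(W_h)$ is exactly the reciprocal of the ``gap'' between $1$ and the Hausdorff--Young balance, so the two cases in $(W_h)$ correspond precisely to whether the conjugate-type exponent forced on $F(u(0,\cdot))$ stays below $2^*_\sigma/\theta$ or not.

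For part $(i)$: given $r\in\big(\frac{N}{N(2-\theta)+2\sigma\theta},\frac{2N}{N(2-\theta)+2\sigma\theta}\big]$, I define $p$ by the stated relation $\frac1p+\frac1r=1+\frac{N(2-\theta)+2\sigma\theta}{2N}$. A short computation shows $p\ge1$ (this is where $r\le\frac{2N}{N(2-\theta)+2\sigma\theta}$ is used, forcing $\frac1p\ge\frac12+\frac{N(2-\theta)+2\sigma\theta}{2N}-\frac{N(2-\theta)+2\sigma\theta}{2N}$, i.e. the right range) and $p\le\frac{2N}{(N-2\sigma)\theta}$ (this uses $r>\frac{N}{N(2-\theta)+2\sigma\theta}$). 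Since $p\theta\in[2,2^*_\sigma]$, the embedding \eqref{immersion1aa} gives $u(0,\cdot)\in L^{p\theta}(\mathbb{R}^N)$, hence $|u(0,\cdot)|^\theta\in L^p(\mathbb{R}^N)$; because $p\ge1$ we also get $|u(0,\cdot)|^2\in L^p$ (as $u(0,\cdot)\in L^2\cap L^{p\theta}$ interpolates to $L^{2p}$ and then $|u|^2\in L^p$), and \eqref{boundF} yields $F(u(0,\cdot))\in L^p(\mathbb{R}^N)$. Now apply Proposition \ref{HYoung} with $W_1\in L^r$, $F(u(0,\cdot))\in L^p$ and output exponent $s=\frac{2N}{N(2-\theta)+2\sigma\theta}$; the relation $\frac1r+\frac1p=1+\frac1s$ is exactly the defining equation for $p$, so $W_1*F(u(0,\cdot))\in L^s(\mathbb{R}^N)$, which is the claim $g\in L^{2N/[N(2-\theta)+2\sigma\theta]}(\mathbb{R}^N)$.

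For part $(ii)$: when $r>\frac{2N}{N(2-\theta)+2\sigma\theta}$ we have $r'<\frac{2N}{2N-N(2-\theta)-2\sigma\theta}=\frac{2N}{(N-2\sigma)\theta}$, so $r'\theta<2^*_\sigma$; also $r'>1$, and one checks $r'\theta\ge2$ (equivalently $r'\ge2/\theta$, which holds since $\theta>2$ gives $2/\theta<1<r'$, or more carefully $r'\ge \frac{2N}{(N-2\sigma)\theta}\cdot$ something — in any event $r'\theta$ lands in $[2,2^*_\sigma)$ after possibly using $2/\theta\le 1\le r'$), hence \eqref{immersion1aa} gives $u(0,\cdot)\in L^{r'\theta}$ and, as above, $F(u(0,\cdot))\in L^{r'}(\mathbb{R}^N)$ via \eqref{boundF}. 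Then Hausdorff--Young with $W_1\in L^r$, $F(u(0,\cdot))\in L^{r'}$ and $\frac1r+\frac1{r'}=1$ gives the convolution in $L^\infty(\mathbb{R}^N)$.

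I expect the main obstacle to be purely bookkeeping: verifying that the exponents $p$, $p\theta$, $r'$, $r'\theta$ all fall in the admissible ranges $[1,\infty)$ and $[2,2^*_\sigma]$ (resp. $[2,2^*_\sigma)$) needed to apply \eqref{immersion1aa} and \eqref{boundF}, and in particular handling the borderline behaviour of $|u(0,\cdot)|^2$ when $p$ is close to $1$ — here one must note that $u(0,\cdot)\in L^2(\mathbb{R}^N)$ is already guaranteed by \eqref{immersion1b}, so $|u(0,\cdot)|^2\in L^1$ always, and the intermediate powers follow by interpolation between $L^2$ and $L^{p\theta}$. No single step is deep; the content is entirely the arithmetic of Hölder/Hausdorff--Young exponents matched against the fractional Sobolev embedding.
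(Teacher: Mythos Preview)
Your proposal is correct and follows essentially the same route as the paper: define $p$ by the Hausdorff--Young balance $\frac1p+\frac1r=1+\frac1s$ with $s=\frac{2N}{N(2-\theta)+2\sigma\theta}$, check via the endpoints of the interval for $r$ that $p\in\bigl[1,\frac{2N}{(N-2\sigma)\theta}\bigr]$ so that $p\theta,2p\in[2,2^*_\sigma]$, use the trace embedding \eqref{immersion1aa} and the bound \eqref{boundF} to obtain $F(u(0,\cdot))\in L^p$, and then apply Proposition~\ref{HYoung}; part~(ii) is handled the same way with $p$ replaced by $r'$. The paper's own proof is terser (it simply cites~(i) to justify $F(u(0,\cdot))\in L^{r'}$ in~(ii)), but the underlying argument is identical to yours.
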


\noindent\begin{proof}($i$) We verify the values of $r$ that satisfy the equality \[\frac{1}{p}+\frac{1}{r}=1+\frac{N(2-\theta)+2\sigma \theta}{2N}.\] Observe that $r\in \left(\frac{N}{N(2-\theta)+2\sigma \theta},\frac{2N}{N(2-\theta)+2\sigma \theta}\right]$ if, and only if, $p\in \left[1,\frac{2N}{(N-2\sigma)\theta}\right)$.\\
	
	As consequence of \eqref{immersion2}, $|u(0,y)|^\theta\in L^p(\mathbb{R}^{N})$ and thus $|u(0,y)|^2\in L^p(\mathbb{R}^{N})$ and \eqref{boundF} yields $F(u(0,y))\in L^p(\mathbb{R}^{N})$. So, $|W_1*F(u(0,y))|=g\in {L^{2N/[N(2-\theta)+2\sigma \theta]}(\mathbb{R}^{N})}$ follows from the Hausdorff-Young inequality.
	
	($ii$) Since $W_1\in L^r(\mathbb{R}^{N})$ for $r=\frac{2N}{N(2-\theta)+2\sigma \theta}$ and $r'=\frac{r}{r-1}=\frac{2N}{(N-2\sigma)\theta}$, applying ($i$) we conclude that $F(u(0,y))\in L^{r'}(\mathbb{R}^{N})$ and $W_1*F(u(0,y))\in L^\infty(\mathbb{R}^{N})$ is consequence of Proposition \ref{HYoung}.
	$\hfill\Box$\end{proof}\vspace*{.3cm}

\begin{corollary}\label{cor}We have
	$|W*F(u(0,y))|\leq C+g$ with $g\in L^{{2N/[N(2-\theta)+2\sigma \theta]}}(\mathbb{R}^{N})$.
\end{corollary}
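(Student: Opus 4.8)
The plan is to combine the two cases of Lemma \ref{hipW} with the decomposition $W = W_1 + W_2$ given in hypothesis $(W_h)$. First I would write
\[
|W*F(u(0,y))| \le |W_1*F(u(0,y))| + |W_2*F(u(0,y))|
\]
pointwise, using $W \ge 0$ and that $F$ can be taken $\ge 0$ (or simply the triangle inequality together with the absolute values inside). The second term is easy: since $W_2 \in L^\infty(\mathbb{R}^N)$ and $F(u(0,y)) \in L^1(\mathbb{R}^N)$ by Remark \ref{obs1}, Young's inequality (Proposition \ref{HYoung} with $p=\infty$, $r=1$, $s=\infty$) gives
\[
|W_2*F(u(0,y))| \le |W_2|_\infty\, |F(u(0,y))|_1 =: C < \infty,
\]
a constant.

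For the first term, $W_1 \in L^r(\mathbb{R}^N)$ with $r > N/[N(2-\theta)+2\sigma\theta]$, and I would split into the two ranges covered by Lemma \ref{hipW}. If $r \in \big(N/[N(2-\theta)+2\sigma\theta],\, 2N/[N(2-\theta)+2\sigma\theta]\big]$, then part $(i)$ of Lemma \ref{hipW} directly gives $|W_1*F(u(0,y))| = g \in L^{2N/[N(2-\theta)+2\sigma\theta]}(\mathbb{R}^N)$. If instead $r > 2N/[N(2-\theta)+2\sigma\theta]$, then part $(ii)$ gives $W_1*F(u(0,y)) \in L^\infty(\mathbb{R}^N)$; in that case I can absorb this bounded term into the constant $C$, or, if a uniform statement is wanted, note that since $u(0,\cdot)$ has compact-support truncations in every $L^q$ one still produces (by interpolation against the $L^\infty$ bound and the known $L^1$ control of $F(u(0,\cdot))$) a function in $L^{2N/[N(2-\theta)+2\sigma\theta]}$; but the cleanest route is simply to set $g \equiv 0$ in this subcase and enlarge $C$. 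Either way one obtains $|W*F(u(0,y))| \le C + g$ with $g \in L^{2N/[N(2-\theta)+2\sigma\theta]}(\mathbb{R}^N)$, which is the claim.

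The only mild obstacle is the bookkeeping between the two exponent ranges for $r$: in the second range $W_1*F(u(0,\cdot))$ is bounded rather than lying in a concrete Lebesgue space above $L^\infty$, so one must decide whether to fold it into the additive constant $C$ (the natural choice, since the corollary only asserts $\le C + g$) or to artificially place it in $L^{2N/[N(2-\theta)+2\sigma\theta]}$; I would take the former. No delicate estimate is involved — the corollary is essentially a packaging of Lemma \ref{hipW} and Remark \ref{obs1} into a single pointwise bound convenient for the Moser iteration that follows.
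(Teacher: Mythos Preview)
Your argument is correct and matches the paper's approach exactly: the paper's proof is the one-line remark that the corollary is ``an immediate consequence of Lemma \ref{hipW}, since $W_2\in L^\infty(\mathbb{R}^{N})$,'' and you have simply unpacked this, handling the $W_2$-part via Young's inequality and the $W_1$-part via the two cases of Lemma \ref{hipW}. Your treatment of the second exponent range (absorbing the $L^\infty$ contribution into $C$ and taking $g\equiv 0$) is the natural reading and more explicit than the paper itself.
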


\noindent\begin{proof}An immediately consequence of Lemma \ref{hipW}, since $W_2\in L^\infty(\mathbb{R}^{N})$.
	$\hfill\Box$\end{proof}\vspace*{.4cm}

Following arguments in \cite{ZelatiNolasco}, we have
\begin{lemma}\label{c1} For all $\theta\in \left(2,\frac{2N}{N-2\sigma}\right)$, we have $|u(0,y)|^{\theta-2}\leq 1+g_2$,	where $g_2\in L^{N/(2\sigma)}(\mathbb{R}^{N})$.
\end{lemma}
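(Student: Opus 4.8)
The strategy is a truncation at height one. First I would record that $v:=u(0,\cdot)$ belongs to $W^{\sigma,2}(\mathbb{R}^N)$ by the trace inequality \eqref{immersion1aa}, so that, by \eqref{immersion2} (equivalently \eqref{immersion1aa}), $v\in L^q(\mathbb{R}^N)$ for every $q\in[2,2^*_\sigma]$; in particular $v\in L^{2^*_\sigma}(\mathbb{R}^N)$, where $2^*_\sigma=\frac{2N}{N-2\sigma}$.

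Next I would set
\[g_2(y):=|u(0,y)|^{\theta-2}\,\chi_{\{\,|u(0,y)|>1\,\}}(y),\]
and check the pointwise inequality $|u(0,y)|^{\theta-2}\le 1+g_2(y)$ for a.e. $y\in\mathbb{R}^N$. This splits into two trivial cases: where $|u(0,y)|\le 1$ we use $\theta-2>0$ (guaranteed by $(f2)$) to get $|u(0,y)|^{\theta-2}\le 1$; where $|u(0,y)|>1$ we have $|u(0,y)|^{\theta-2}=g_2(y)$. In both cases the bound holds.

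It then remains to verify that $g_2\in L^{N/(2\sigma)}(\mathbb{R}^N)$, and this is where the hypothesis $\theta<2^*_\sigma$ enters: a direct computation shows that $\theta<2^*_\sigma$ is equivalent to $(\theta-2)\tfrac{N}{2\sigma}<2^*_\sigma$. Consequently, on the set $\{\,|u(0,y)|>1\,\}$ one has $|u(0,y)|^{(\theta-2)N/(2\sigma)}\le|u(0,y)|^{2^*_\sigma}$, whence
\[\int_{\mathbb{R}^N}|g_2(y)|^{N/(2\sigma)}\,\dd y=\int_{\{\,|u(0,y)|>1\,\}}|u(0,y)|^{(\theta-2)N/(2\sigma)}\,\dd y\le\int_{\mathbb{R}^N}|u(0,y)|^{2^*_\sigma}\,\dd y<\infty,\]
the last integral being finite since $v\in L^{2^*_\sigma}(\mathbb{R}^N)$. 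This finishes the proof.

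There is no real obstacle here; the only points requiring a little care are that $\theta-2>0$ (so the truncation at height one is harmless in the sublevel set) and that the exponent $(\theta-2)\tfrac{N}{2\sigma}$ does not exceed $2^*_\sigma$ — both being exactly encoded in hypothesis $(f2)$. One could equally well argue that $\{\,|u(0,y)|>1\,\}$ has finite measure, by Chebyshev's inequality and $v\in L^2(\mathbb{R}^N)$, and then invoke the inclusion $L^{2^*_\sigma}\subset L^p$ on sets of finite measure for every $p\le 2^*_\sigma$; but the pointwise comparison above is the shortest route.
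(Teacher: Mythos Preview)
Your proof is correct and follows essentially the same approach as the paper's: define $g_2=|u(0,y)|^{\theta-2}\chi_{\{|u(0,y)|>1\}}$, verify the pointwise bound by splitting according to whether $|u(0,y)|\le 1$, and then use the Sobolev embedding to conclude $g_2\in L^{N/(2\sigma)}(\mathbb{R}^N)$. The only cosmetic difference is that the paper splits the integrability check into two subcases according to whether $(\theta-2)N/(2\sigma)\le 2$ (bounding by $|u(0,y)|^2$) or $>2$ (using the embedding directly), whereas you handle both at once via comparison with $|u(0,y)|^{2^*_\sigma}$ on $\{|u(0,y)|>1\}$ --- your version is in fact slightly cleaner.
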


\noindent\begin{proof}We have
	\[|u(0,y)|^{\theta-2}=|u(0,y)|^{\theta-2}\chi_{\{|u(0,y)|\leq 1\}}+|u(0,y)|^{\theta-2}\chi_{\{|u(0,y)|>1\}}\leq 1+g_2,\]
	with $g_2=|u(0,y)|^{\theta-2}\chi_{\{|u(0,y)|>1\}}$. If $(\theta-2)N/(2\sigma)\leq 2$, then
	\[\int_{\mathbb{R}^{N}}|u(0,y)|^{\frac{(\theta-2)N}{2\sigma}}\chi_{\{|u(0,y)|>1\}}\leq \int_{\mathbb{R}^{N}}|u(0,y)|^2\chi_{\{|u(0,y)|>1\}}\leq\int_{\mathbb{R}^{N}}|u(0,y)|^2<\infty.\]
	
	When $2<(\theta-2)N/(2\sigma)$, then $(\theta-2)N/(2\sigma)\in \left(2,\frac{2N}{N-2\sigma}\right)$ and, as outcome of \eqref{immersion2}, $|u(0,y)|^{\theta-2}\in L^{N/(2\sigma)}(\mathbb{R}^{N})$.
	$\hfill\Box$\end{proof}

\begin{lemma}\label{c2} For all $\theta\in \left(2,\frac{2N}{N-2\sigma}\right)$ we have $h=g|u(0,y)|^{\theta-2}\in L^{N/(2\sigma)}(\mathbb{R}^{N})$, where $g$ is the function of Lemma \ref{hipW}.
\end{lemma}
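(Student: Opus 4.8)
The plan is to combine the two preceding lemmas via a Hölder-exponent bookkeeping argument. From Lemma \ref{hipW}(i) we know $g = |W_1 * F(u(0,\cdot))| \in L^{2N/[N(2-\theta)+2\sigma\theta]}(\mathbb{R}^N)$; call this exponent $a := 2N/[N(2-\theta)+2\sigma\theta]$. From Lemma \ref{c1} we know $|u(0,\cdot)|^{\theta-2} \le 1 + g_2$ with $g_2 \in L^{N/(2\sigma)}(\mathbb{R}^N)$. The product $h = g\,|u(0,\cdot)|^{\theta-2} \le g + g\,g_2$. The first term $g$ already lies in $L^a$, so the issue is to show $g\, g_2 \in L^{N/(2\sigma)}$ and, more importantly, that $L^a \subset L^{N/(2\sigma)}$ is not what is claimed — rather, one needs that all the pieces land in $L^{N/(2\sigma)}(\mathbb{R}^N)$ exactly.

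First I would apply Hölder's inequality to the product $g\,g_2$: writing $\tfrac{1}{N/(2\sigma)} = \tfrac{2\sigma}{N} = \tfrac{1}{a} + \tfrac{1}{b}$, one solves for the conjugate-type exponent $b$ and checks $b \ge 1$; this is exactly the arithmetic identity that the definition of $a$ was rigged to produce, using $\theta < 2^*_\sigma$. Then $g \in L^a$ and $g_2 \in L^b$ would give $g\, g_2 \in L^{N/(2\sigma)}$ — but here one must be careful, because Lemma \ref{c1} only places $g_2$ in $L^{N/(2\sigma)}$, not in an arbitrary $L^b$. The cleaner route, which I would take, is to interpolate: since $g_2 = |u(0,\cdot)|^{\theta-2}\chi_{\{|u(0,\cdot)|>1\}}$ and $u(0,\cdot) \in W^{\sigma,2}(\mathbb{R}^N) \hookrightarrow L^q(\mathbb{R}^N)$ for every $q \in [2, 2^*_\sigma]$ by \eqref{immersion2}, the function $g_2$ actually lies in $L^t(\mathbb{R}^N)$ for every $t$ with $(\theta-2)t \in [2, 2^*_\sigma]$, i.e. on a whole interval of exponents including small ones (because of the truncation $\{|u|>1\}$, lower powers are controlled by higher ones). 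So one picks the $t$ that makes $\tfrac{1}{a} + \tfrac{1}{t} = \tfrac{2\sigma}{N}$ and applies Hölder.

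The key steps in order: (1) recall $g \in L^a$ from Lemma \ref{hipW}(i) and note that $a \ge N/(2\sigma)$ would already handle the bare term $g$ in the pointwise bound $h \le g(1+g_2)$ — actually I should double-check the direction of this inequality, since on a set of infinite measure $L^a \subset L^{N/(2\sigma)}$ fails unless we first absorb via the truncation; the correct statement is that $g \le g\cdot 1$ is only integrated against where $g_2$ lives, so one should instead write $h \le g\,g_2 + g\,\chi_{\{|u(0,\cdot)|\le 1\}}$ and bound the second piece by $g$ restricted to a set where one also has decay. (2) Use interpolation of $u(0,\cdot)$ between $L^2$ and $L^{2^*_\sigma}$ to put $g_2$ in the Lebesgue space $L^t$ with $t$ chosen by the exponent identity $\tfrac{1}{a}+\tfrac{1}{t} = \tfrac{2\sigma}{N}$, verifying $(\theta-2)t \in [2,2^*_\sigma]$ using $\max\{2, N/(N-2\sigma)\} < \theta < 2^*_\sigma$. (3) Apply Hölder to conclude $g\,g_2 \in L^{N/(2\sigma)}$, and handle the truncated piece by the same interpolation applied to $|u(0,\cdot)|^{\theta-2}\chi_{\{|u(0,\cdot)|\le 1\}} \le |u(0,\cdot)|^{(\theta-2)}\le |u(0,\cdot)|^0 = 1$ only on a set where $g$ itself is integrable to power $N/(2\sigma)$ — so in fact the bound $h = g|u(0,\cdot)|^{\theta-2} \le g(1+g_2)$ combined with $g \in L^a$, $a > N/(2\sigma)$ fails, and one genuinely needs $g \in L^{N/(2\sigma)}$ directly, which forces using the \emph{full} range of $W_1$ exponents.

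The main obstacle I anticipate is precisely this exponent bookkeeping: making sure that the single exponent $a$ coming from Lemma \ref{hipW}(i) together with the interpolation range for $g_2$ really do satisfy the Hölder balance $\tfrac{1}{a} + \tfrac{1}{t} = \tfrac{2\sigma}{N}$ with an \emph{admissible} $t$, i.e. that the hypothesis $\theta > \max\{2, N/(N-2\sigma)\}$ (which guarantees $(\theta-2)N/(2\sigma)$ can exceed $2$) and $\theta < 2^*_\sigma$ are exactly strong enough. I expect the verification to reduce, after clearing denominators, to the single inequality defining the admissible lower bound on $r$ in $(W_h)$, so the lemma is essentially a restatement of the constraint on $r$ in a form convenient for the Moser iteration that follows. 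Everything else — the pointwise bound, the truncation trick, the invocation of \eqref{immersion2} — is routine.
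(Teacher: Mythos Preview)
Your exponent bookkeeping is correct, but the detour through Lemma~\ref{c1} creates a spurious obstacle that you never resolve. Writing $h\le g(1+g_2)=g+gg_2$ leaves you with a bare $g$ term, and since $g$ lies only in $L^a$ with $a=2N/[N(2-\theta)+2\sigma\theta]>N/(2\sigma)$ (strict because $\theta>2$), on a space of infinite measure you cannot conclude $g\in L^{N/(2\sigma)}$. You notice this yourself but do not propose a working fix.

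The paper's proof bypasses the decomposition entirely: it applies H\"older directly to $\bigl(g\,|u(0,y)|^{\theta-2}\bigr)^{N/(2\sigma)}$ with conjugate exponents $\alpha,\alpha'$, choosing $\alpha$ so that $\alpha N/(2\sigma)=a$ and then checking that $\alpha'(\theta-2)N/(2\sigma)=2^*_\sigma$. This is exactly your computation $1/a+1/t=2\sigma/N$ with $(\theta-2)t=2^*_\sigma$, but applied to the \emph{full} factor $|u(0,y)|^{\theta-2}$ rather than its truncation $g_2$. Since $u(0,\cdot)\in L^{2^*_\sigma}(\mathbb{R}^N)$ by \eqref{immersion1aa}, the second integral is finite and the proof closes in one line. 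There is no need to invoke Lemma~\ref{c1} at all.
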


\noindent\begin{proof} Since the application of the Hölder inequality yields
\[\int_{\mathbb{R}^{N}}\left(g|u(0,y)|^{\theta-2}\right)^{N/(2\sigma)}\leq \left(\int_{\mathbb{R}^{N}}g^{\alpha N/(2\sigma)}\right)^{\frac{1}{\alpha}}\left(\int_{\mathbb{R}^{N}}\left(|u(0,y)|^{(\theta-2)N/(2\sigma)}\right)^{\alpha'}\right)^{\frac{1}{\alpha'}},\]
if we define $\alpha$ so that $\alpha N/(2\sigma)=2N/[N(2-\theta)+2\sigma\theta]$, then $\alpha'=4\sigma/[(N-2\sigma)(\theta-2)]$ and we have $\alpha'N(\theta-2)/(2\sigma)=2N/[N-2\sigma]$. Since both integrals of the right-hand side of the last inequality are integrable, we are done.
$\hfill\Box$\end{proof}\hspace*{.2cm}

The proof of the next result adapts arguments in \cite{Cabre} and \cite{ZelatiNolasco}. We denote $v_+(z)=\max\{0,v(z)\}$.
\begin{proposition}\label{p1} Let $v\in H^1(\mathbb{R}^{N+1}_+,x^{1-2\sigma})$ be any solution of \eqref{P}. For all $\beta>0$ it holds
	\begin{align*}
	\hspace*{-.25cm}|v^{1+\beta}_+(0,y)|^2_{2^{*}_\sigma}&\leq 2C^2_{2^{*}_\sigma}C_\beta\left[\left(|V|_\infty+CC_1(2+M)\right)\left|v^{1+\beta}_+(0,y)\right|^2_{2}\right.\nonumber\\
	&\qquad\qquad\quad\left.+C_1|g|_{2N/[N(2-\theta)+2\sigma\theta]}\left|v^{1+\beta}_+(0,y)\right|^2_{2^{*}_\sigma(2/\theta)}\right],
	\end{align*}
	where $C_\beta=\max\{m^{-2},\left(1+\frac{\beta}{2}\right)\}$, $C,C_1,\tilde{C}$ and $M=M(\beta)$ are positive constants and $g=|W_1*F(v(0,y)))|$ is the function given by Lemma \ref{hipW}.
\end{proposition}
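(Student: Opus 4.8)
The proof runs the first step of a Moser-type iteration, in the spirit of Cabré--Solà-Morales \cite{Cabre} and Coti Zelati--Nolasco \cite{ZelatiNolasco}: one tests the weak formulation against a truncated power of $v_+$ and converts the resulting energy estimate into an $L^{2^{*}_\sigma}$-bound on the trace via \eqref{immersion1a}. First I would fix $T>0$, set $v_{+,T}=\min\{v_+,T\}$ and plug the test function $\varphi=v_+v_{+,T}^{2\beta}$ into Definition \ref{wsolution}; boundedness of $v_{+,T}$ makes $\varphi$ lie in the relevant weighted space and all boundary integrals finite, by the trace estimates of Section \ref{FS}. Writing $w_T=v_+v_{+,T}^{\beta}$, a direct pointwise chain-rule computation --- splitting $\mathbb{R}^{N+1}_+$ into $\{v<T\}$ and $\{v\ge T\}$ --- gives $|\nabla w_T|^2\le(1+\tfrac{\beta}{2})\,\nabla v\cdot\nabla\varphi$ and $m^2w_T^2=m^2v\varphi$ a.e., whence
\[\iint_{\mathbb{R}^{N+1}_+}\left(|\nabla w_T|^2+m^2w_T^2\right)x^{1-2\sigma}\le\left(1+\tfrac{\beta}{2}\right)\int_{\mathbb{R}^N}\left([W*F(v(0,y))]f(v(0,y))-V(y)v(0,y)\right)\varphi(0,y).\]
Combining with \eqref{immersion1a} --- where comparing the weighted norm $\|w_T\|_\sigma$ with the $m^2$-modified quadratic form produces the factor $m^{-2}$ in $C_\beta=\max\{m^{-2},1+\beta/2\}$ --- bounds the left-hand side below by $C_{2^{*}_\sigma}^{-2}C_\beta^{-1}\,|w_T(0,\cdot)|_{2^{*}_\sigma}^2$.

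Next I would estimate the boundary integral on the right. Since $\varphi(0,\cdot)\ge0$ and $v(0,\cdot)=v_+(0,\cdot)$ on its support, $-V(y)v(0,y)\varphi(0,y)\le|V|_\infty w_T(0,y)^2$, contributing $|V|_\infty|w_T(0,\cdot)|_2^2$. For the Choquard term, Corollary \ref{cor} gives $|W*F(v(0,\cdot))|\le C+g$ with $g=|W_1*F(v(0,\cdot))|\in L^{2N/[N(2-\theta)+2\sigma\theta]}(\mathbb{R}^N)$, and \eqref{boundf} (with $\xi$ fixed) gives $|f(v(0,\cdot))|\le\xi v_+(0,\cdot)+C_\xi v_+(0,\cdot)^{\theta-1}$. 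Multiplying these bounds produces four terms. The constant$\times$linear term is $\int C\xi\,v_+^2v_{+,T}^{2\beta}=C\xi|w_T|_2^2$. The constant$\times(\theta-1)$ term equals $CC_\xi\int v_+^{\theta-2}w_T^2\le CC_\xi\int(1+g_2)w_T^2$ by Lemma \ref{c1}. The $g\times$linear term equals $\int g\,w_T^2\le\xi|g|_{2N/[N(2-\theta)+2\sigma\theta]}\,|w_T|^2_{2^{*}_\sigma(2/\theta)}$, because the conjugate exponent of $2N/[N(2-\theta)+2\sigma\theta]$ is exactly $\tfrac12\,2^{*}_\sigma(2/\theta)$; this is the lower-order feedback term kept in the estimate. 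The $g\times(\theta-1)$ term equals $C_\xi\int(g\,v_+^{\theta-2})w_T^2\le C_\xi|g\,v_+^{\theta-2}|_{N/(2\sigma)}\,|w_T|_{2^{*}_\sigma}^2$, finite by Lemma \ref{c2}.

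It remains to absorb the critical pieces and pass to the limit. Both $\int g_2w_T^2$ and $\int(g\,v_+^{\theta-2})w_T^2$ are handled by a Brezis--Kato-type splitting of the $L^{N/(2\sigma)}$ weight at a level $K=K(\beta)$: the truncated part contributes a multiple of $|w_T|_2^2$ (collected into the constant $M=M(\beta)$), while the tail contributes $\varepsilon(K)\,|w_T|_{2^{*}_\sigma}^2$ with $\varepsilon(K)\to0$ as $K\to\infty$; choosing $K$ so that the total coefficient of $|w_T(0,\cdot)|_{2^{*}_\sigma}^2$ on the right is at most $\tfrac12\,C_{2^{*}_\sigma}^{-2}C_\beta^{-1}$ and moving that term to the left produces the stated inequality for $w_T$, the factor $2$ being the effect of this absorption and $C_1$ collecting $\xi,C_\xi$ and the fixed structural constants. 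Finally, letting $T\to\infty$ one has $w_T(0,\cdot)\uparrow v_+^{1+\beta}(0,\cdot)$ pointwise, so monotone convergence in each term upgrades the estimate from $w_T$ to $v_+^{1+\beta}$; the right-hand norms are finite as the inductive hypothesis of the iteration, since at stage $\beta$ one already knows $v_+^{1+\beta}(0,\cdot)\in L^2\cap L^{2^{*}_\sigma(2/\theta)}(\mathbb{R}^N)$.

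The main obstacle is the exponent bookkeeping that keeps the argument non-circular: a naive Hölder split of $\int g\,v_+^{\theta-1}\varphi(0,\cdot)$ would force $v_+^{\theta-2}\in L^\infty(\mathbb{R}^N)$, which is precisely what is being proved. Lemmas \ref{c1} and \ref{c2} are the device that circumvents this, placing $v_+^{\theta-2}$ and $g\,v_+^{\theta-2}$ in $L^{N/(2\sigma)}$ so that the critical $L^{2^{*}_\sigma}$-contribution can be split off and absorbed, while the genuinely lower-order term $|g|\,|w_T|^2_{2^{*}_\sigma(2/\theta)}$ (note $2^{*}_\sigma(2/\theta)\in[2,2^{*}_\sigma)$, as $2<\theta<2^{*}_\sigma$) is retained to drive the subsequent iteration. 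A secondary, routine point is the admissibility of $\varphi=v_+v_{+,T}^{2\beta}$ as a test function and the interchange of limit and integral, both handled by the truncation together with the embeddings of Section \ref{FS} and the integrability already secured in Lemmas \ref{hipW}--\ref{c2}.
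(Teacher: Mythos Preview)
Your proposal is correct and follows essentially the same route as the paper: test with $\varphi=v\,v_T^{2\beta}$ (where $v_T=\min\{v_+,T\}$), use the chain-rule inequality to bound $\|v v_T^\beta\|_\sigma^2$ by $C_\beta$ times the weak form, control the boundary term via $|W*F|\le C+g$ and $|f(t)|\le C_1(|t|+|t|^{\theta-1})$, apply Lemmas~\ref{c1}--\ref{c2} to place the $|v|^{\theta-2}$ factors in $L^{N/(2\sigma)}$, perform the Brezis--Kato splitting at level $M=M(\beta)$ to absorb the critical $L^{2^*_\sigma}$ contribution, H\"older the remaining $\int g\,w_T^2$ term, and let $T\to\infty$. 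The only cosmetic differences are your use of $v_+$ rather than $v$ in the test function and of the $(\xi,C_\xi)$ form of \eqref{boundf} instead of a single constant $C_1$; neither affects the argument.
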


\noindent\begin{proof}
Choosing $\varphi=\varphi_{\beta,T}=vv^{2\beta}_T$ in \eqref{derivative}, where $v_T=\min\{v_+,T\}$ and $\beta>0$, we have $0\leq \varphi_{\beta,T}\in H^1(\mathbb{R}^{N+1}_+,x^{1-2\sigma})$ and
\begin{multline}\label{varphibetaT}
\iint_{\mathbb{R}^{N+1}_+}\left(\nabla v\cdot\nabla \varphi_{\beta,T}+m^2v\varphi_{\beta,T}\right)x^{1-2\sigma}\\
=-\int_{\mathbb{R}^{N}}V(y)v(0,y)\varphi_{\beta,T}(0,y)+\int_{\mathbb{R}^{N}}\left[W*F(v(0,y))\right]f(v(0,y))\varphi_{\beta,T}(0,y),
\end{multline}
Since $\nabla\varphi_{\beta,T}=v^{2\beta}_T\nabla v+2\beta vv^{2\beta-1}_T\nabla v_T$,
the left-hand side of \eqref{varphibetaT} is given by
\begin{multline}\label{varphibetaTl}
\iint_{\mathbb{R}^{N+1}_+}\left(\nabla v\cdot \left(v^{2\beta}_T\nabla v+2\beta vv^{2\beta-1}_T\nabla v_T\right)+m^2v\left(vv^{2\beta}_T\right)\right)x^{1-2\sigma}\\
=\iint_{\mathbb{R}^{N+1}_+}v^{2\beta}_T\left(|\nabla v|^2+m^2v^2\right)x^{1-2\sigma}+2\beta\iint_{D_T}v^{2\beta}_T|\nabla v|^2\,x^{1-2\sigma},
\end{multline}
where $D_T=\{(x,y)\in (0,\infty)\times \mathbb{R}^{N}\,:\, v_T(x,y)\leq T\}$.

Now we express \eqref{varphibetaTl} in terms of $\|vv^\beta_T\|^2_\sigma$. For this, we note that $\nabla(vv^\beta_T)=v^\beta_T\nabla v+\beta vv^{\beta-1}_T\nabla v_T$. Therefore,
\[\iint_{\mathbb{R}^{N+1}_+}|\nabla (vv^\beta_T)|^2x^{1-2\sigma}=\iint_{\mathbb{R}^{N+1}_+}v^{2\beta}_T|\nabla v|^2\,x^{1-2\sigma}+(2\beta+\beta^2)\iint_{D_T}v^{2\beta}_T|\nabla v|^2x^{1-2\sigma},\]
thus yielding
\begin{align}\label{norm}
\|vv^\beta_T\|^2_\sigma
&=\iint_{\mathbb{R}^{N+1}_+}v^{2\beta}_T|\nabla v|^2x^{1-2\sigma}+(2\beta+\beta^2)\iint_{D_T}v^{2\beta}_T|\nabla v|^2x^{1-2\sigma}\nonumber\\
&\qquad +\iint_{\mathbb{R}^{N+1}_+}(vv^\beta_T)^2x^{1-2\sigma}\nonumber\\
&=\iint_{\mathbb{R}^{N+1}_+}v^{2\beta}_T\left(|\nabla v|^2+|v|^2\right)x^{1-2\sigma}+2\beta\left(1+\frac{\beta}{2}\right)\iint_{D_T}v^{2\beta}_T|\nabla v|^2x^{1-2\sigma}\nonumber\\
&\leq C_\beta\left[\iint_{\mathbb{R}^{N+1}_+}v^{2\beta}_T\left(|\nabla v|^2+m^2|v|^2\right)x^{1-2\sigma}+2\beta\iint_{D_T}v^{2\beta}_T|\nabla v|^2x^{1-2\sigma}\right],
\end{align}
where $C_\beta=\max\left\{m^{-2},\left(1+\frac{\beta}{2}\right)\right\}$. Gathering \eqref{varphibetaT}, \eqref{varphibetaTl} and \eqref{norm}, we obtain
\begin{align}\label{norm=r}
\|vv^\beta_T\|^2_\sigma\leq& C_\beta\left[-\int_{\mathbb{R}^{N}}V(y)v^2(0,y)v^{2\beta}_T(0,y)\right.\nonumber\\
&\qquad+\left.\int_{\mathbb{R}^{N}}\left[W*F(v(0,y))\right]f(v(0,y))v(0,y)v^{2\beta}_T(0,y)\right.
\end{align}

We now start to consider the right-hand side of \eqref{norm=r}. Since $|f(t)|\leq C_1(|t|+|t|^{\theta-1})$, Corollary \ref{cor} shows that it can be written as
\begin{align}\label{rhs}
\leq& C_\beta\left[|V|_\infty\int_{\mathbb{R}^{N}}v^2(0,y)v^{2\beta}_T(0,y)]^{2}+\int_{\mathbb{R}^{N}}(C+g)|f(v(0,y))|\,|v(0,y)|v^{2\beta}_T(0,y)\right]\nonumber\\
\leq&C_\beta\left[|V|_\infty\int_{\mathbb{R}^{N}}v^2(0,y)v^{2\beta}_T(0,y)]^{2}+C\int_{\mathbb{R}^{N}}C_1\left(|v(0,y))|+|v(0,y)|^{\theta-1}\right)|v(0,y)|v^{2\beta}_T(0,y)\right.\nonumber\\
&\qquad+\left.C_1\int_{\mathbb{R}^{N}}g\left(|v(0,y)|+|v(0,y)|^{\theta-1}\right)\,|v(0,y)|v^{2\beta}_T(0,y)\right]\nonumber\\
\leq& C_\beta\left[\left(|V|_\infty+CC_1\right)\int_{\mathbb{R}^{N}}v^2(0,y)v^{2\beta}_T(0,y)+CC_1\int_{\mathbb{R}^{N}}|v^{\theta-2}(0,y)|v^2(0,y)v^{2\beta}_T(0,y)\right.\nonumber\\
&\qquad+\left.C_1\int_{\mathbb{R}^{N}}gv^2(0,y)v^{2\beta}_T(0,y)+C_1\int_{\mathbb{R}^{N}}g|v^{\theta-2}(0,y)|v^2(0,y)v^{2\beta}_T(0,y)\right]
\end{align}

Applying Lemmas \ref{c1} and \ref{c2}, inequality \eqref{rhs} becomes
\begin{align}\label{rhs2}
\leq&C_\beta\left[\left(|V|_\infty+CC_1\right)\int_{\mathbb{R}^{N}}v^2(0,y)v^{2\beta}_T(0,y)+CC_1\int_{\mathbb{R}^{N}}\left(1+g_2\right)v^2(0,y)v^{2\beta}_T(0,y)\right.\nonumber\\
&\qquad+\left.C_1\int_{\mathbb{R}^{N}}gv^2(0,y)v^{2\beta}_T(0,y)+C_1\int_{\mathbb{R}^{N}}hv^2(0,y)v^{2\beta}_T(0,y)\right]\nonumber\\
\leq&C_\beta\left[\left(|V|_\infty+2CC_1\right)\int_{\mathbb{R}^{N}}v^2(0,y)v^{2\beta}_T(0,y)+CC_1\int_{\mathbb{R}^{N}}gv^2(0,y)v^{2\beta}_T(0,y)\right.\nonumber\\
&\left.\qquad+CC_1\int_{\mathbb{R}^{N}}Gv^2(0,y)v^{2\beta}_T(0,y)\right],
\end{align}
where $G=g_2+h\in L^{N/(2\sigma)}(\mathbb{R}^{N})$, admitting that $CC_1\geq C_1$.

Because $|u(0,y)|_{2^*_\sigma}\leq C_{2^{*}_\sigma}\|u\|_\sigma$ for all $u\in H^1(\mathbb{R}^{N+1}_+,x^{1-2\sigma})$, the combination of this immersion with \eqref{norm=r} and \eqref{rhs2} produces
\begin{align}\label{rhs3}|v(0,y)v^{\beta}_T(0,y)|^2_{2^{*}_\sigma}
&\leq C^2_{2^{*}_\sigma} C_\beta\left[\left(|V|_\infty+2CC_1\right)\int_{\mathbb{R}^{N}}v^2(0,y)v_T^{2\beta}(0,y)\right.\nonumber\\
&\quad\left.+CC_1\int_{\mathbb{R}^{N}}gv^2(0,y)v_T^{2\beta}(0,y)+\int_{\mathbb{R}^{N}}Gv^2(0,y)v_T^{2\beta}(0,y)\right].
\end{align}

Let us turn our attention to the last integral in the right-hand side of \eqref{rhs3}. For all $M>0$, define $A_1=\{G\leq M\}$ and $A_2=\{G>M\}$. Then, since Lemmas \ref{c1} and \ref{c2} guarantee that $G=g_2+h\in L^{N/(2\sigma)}(\mathbb{R}^{N})$,
\begin{align*}
\int_{\mathbb{R}^{N}}Gv^2(0,y)v_T^{2\beta}(0,y)&\leq M\int_{A_1}v^2(0,y)v_T^{2\beta}(0,y)\\
&\qquad+\left(\int_{A_2}G^{N/(2\sigma)}\right)^{\frac{2\sigma}{N}}\left(\int_{A_2}\left(v^2(0,y)v_T^{2\beta}(0,y)\right)^{\frac{N}{N-2\sigma}}\right)^{\frac{N-2\sigma}{N}}\nonumber\\
&\leq M\int_{\mathbb{R}^{N}}v^2(0,y)v_T^{2\beta}(0,y)\\
&\qquad+\epsilon(M)\left(\int_{\mathbb{R}^{N}}\left(v(0,y)v^\beta_T(0,y)\right)^{2^{*}_\sigma}\right)^{\frac{N-2\sigma}{N}},
\end{align*}
and $\epsilon(M)=\left(\int_{A_2}G^{N/(2\sigma)}\right)^{2\sigma/N}\to 0$ when $M\to\infty$.

If $M$ is taken so that $\epsilon(M)C^2_{2^{*}_\sigma}C_\beta CC_1<1/2$, we have
\begin{align}\label{rhs4}
|v(0,y)v^{\beta}_T(0,y)|^2_{2^{*}_\sigma}&\leq 2C^2_{2^{*}_\sigma}C_\beta
\left[\left(|V|_\infty+CC_1(2+M)\right)\int_{\mathbb{R}^{N}}v^2(0,y)v_T^{2\beta}(0,y)\right.\nonumber\\
&\qquad\qquad\quad\left.+CC_1\int_{\mathbb{R}^{N}}gv^2(0,y)v_T^{2\beta}(0,y)\right].
\end{align}

The Hölder inequality guarantees that
\begin{align*}
\int_{\mathbb{R}^{N}}gv^2(0,y)v_T^{2\beta}(0,y)\leq |g|_{2N/[N(2-\theta)+2\sigma\theta]}\left(\int_{\mathbb{R}^{N}}\left(v^2(0,y)v_T^{2\beta}(0,y)\right)^{\alpha'}\right)^{1/\alpha'},
\end{align*}
where
\[\alpha'=\frac{\frac{2N}{N(2-\theta)+2\sigma\theta}}{\frac{2N}{N(2-\theta)+2\sigma\theta}-1}=\frac{2N}{(N-2\sigma)\theta}=\frac{2^{*}_\sigma}{\theta}.\]
Thus,
\begin{align*}
\int_{\mathbb{R}^{N}}gv^2(0,y)v_T^{2\beta}(0,y)\leq |g|_{2N/[N(2-\theta)+2\sigma\theta]}\,|v(0,y)v_T^{\beta}(0,y)|^2_{2^{*}_\sigma(2/\theta)}
\end{align*}
and substitution on the right-hand side of \eqref{rhs4} yields
\begin{align}\label{rhs5}
\hspace*{-.25cm}|v(0,y)v_T^{\beta}(0,y)|^2_{2^{*}_\sigma}&\leq 2C^2_{2^{*}_\sigma}C_\beta\left[\left(|V|_\infty+CC_1(2+M)\right)|v(0,y)v_T^{\beta}(0,y)|^2_{2}\right.\nonumber\\
&\qquad\qquad\quad\left.+C_1|g|_{2N/[N(2-\theta)+2\sigma\theta]}\,|v(0,y)v_T^{\beta}(0,y)|^2_{2^{*}_\sigma(2/\theta)}\right],
\end{align}

Since $v(0,y)v_T^{\beta}(0,y)\to v^{1+\beta}_+$, it follows from \eqref{rhs5} that
\begin{align*}
\hspace*{-.25cm}\left|v^{1+\beta}_+(0,y)\right|^2_{2^{*}_\sigma}&\leq 2C^2_{2^{*}_\sigma}C_\beta\left[\left(|V|_\infty+CC_1(2+M)\right)\left|v^{1+\beta}_+(0,y)\right|^2_{2}\right.\\
&\qquad\qquad\quad\left.+C_1|g|_{2N/[N(2-\theta)+2\sigma\theta]}\,\left|v^{1+\beta}_+(0,y)\right|^2_{2^{*}_\sigma(2/\theta)}\right],
\end{align*}
and we are done. (Observe, however, that $M$ depends on $\beta$.) $\hfill\Box$\end{proof}\vspace*{.2cm}

\begin{proposition}\label{p2}
	For all $p\in [2,\infty)$ we have $v(0,\cdot)\in L^p(\mathbb{R}^{N})$.
\end{proposition}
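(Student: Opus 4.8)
I would derive Proposition~\ref{p2} from Proposition~\ref{p1} by a Moser-type iteration on the exponent $\beta$, starting from $\beta=0$ and showing inductively that $v(0,\cdot)\in L^{p}(\mathbb R^N)$ for a sequence $p_k\to\infty$. The key structural feature of the estimate in Proposition~\ref{p1} is that the troublesome term on the right involves $|v^{1+\beta}_+(0,y)|_{2^{*}_\sigma(2/\theta)}$, with $2^{*}_\sigma(2/\theta)<2^{*}_\sigma$ (because $\theta>2$) but also $2^{*}_\sigma(2/\theta)>2$ (because $\theta<2^{*}_\sigma$). So at each stage, if we already know $v^{1+\beta}_+(0,\cdot)\in L^{q}$ for $q=\max\{2,2^{*}_\sigma(2/\theta)\}=2^{*}_\sigma(2/\theta)$ (for $v(0,\cdot)$ itself this holds by the trace immersion \eqref{immersion1aa} once $\beta$ is small), then the right-hand side is finite and Proposition~\ref{p1} upgrades $v^{1+\beta}_+(0,\cdot)$ to $L^{2^{*}_\sigma}$, i.e. $v(0,\cdot)\in L^{(1+\beta)2^{*}_\sigma}$.

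\textbf{Setting up the iteration.} Concretely, define $\chi=2^{*}_\sigma/(2^{*}_\sigma(2/\theta))=\theta/2>1$. I would set $q_0=2^{*}_\sigma$ (we have $v(0,\cdot)\in L^{q_0}$ by \eqref{immersion1a}) and, having chosen $\beta_k$ so that $(1+\beta_k)\cdot 2^{*}_\sigma(2/\theta)=q_k$, conclude from Proposition~\ref{p1} — whose right-hand side is then controlled by $\|v^{1+\beta_k}_+(0,\cdot)\|_{2}$ (finite since $q_k\ge 2$, on a bounded-measure part, or splitting into $\{v\le1\}$ and $\{v>1\}$) and $\|v^{1+\beta_k}_+(0,\cdot)\|_{2^{*}_\sigma(2/\theta)}=\|v(0,\cdot)\|_{q_k}^{1+\beta_k}<\infty$ — that $v^{1+\beta_k}_+(0,\cdot)\in L^{2^{*}_\sigma}$, i.e. $q_{k+1}:=(1+\beta_k)2^{*}_\sigma=q_k\chi$. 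Hence $q_k=2^{*}_\sigma\chi^{k}\to\infty$, and since also $q_k\ge 2$ for all $k$, interpolation between $L^2$ and $L^{q_k}$ gives $v(0,\cdot)\in L^p$ for every $p\in[2,\infty)$. One must check the admissibility range of $\beta$: the constraint from Lemma~\ref{hipW} and Proposition~\ref{p1} is only $\beta>0$, and $M=M(\beta)$ may blow up with $\beta$, but since we only need \emph{finiteness} of each $L^{q_k}$-norm (not a uniform bound, as we are not proving an $L^\infty$ bound here — that is the separate Moser step mentioned after the theorem), this causes no difficulty.

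\textbf{The main obstacle.} The delicate point is the very first step and the bookkeeping that each right-hand side quantity in Proposition~\ref{p1} is actually finite \emph{before} we know the conclusion, i.e. justifying the truncation argument: one works with $v_T=\min\{v_+,T\}$, for which $v v_T^{\beta}(0,\cdot)\in L^{2^{*}_\sigma}$ is automatic (since $v_T$ is bounded and $v(0,\cdot)\in L^{2^{*}_\sigma}$), derives \eqref{rhs5} with all terms finite, and only then lets $T\to\infty$ using Fatou on the left and monotone/dominated convergence on the right — the right-hand side of \eqref{rhs5} converges precisely because $|v(0,\cdot)v_T^{\beta}|_{2^{*}_\sigma(2/\theta)}\le |v(0,\cdot)|_{(1+\beta)2^{*}_\sigma(2/\theta)}^{1+\beta}=|v(0,\cdot)|_{q_k}^{1+\beta}$, which is the induction hypothesis. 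So the proof is: (1) fix the level $k$ and the corresponding $\beta_k$; (2) run the truncated estimate of Proposition~\ref{p1}; (3) pass to the limit $T\to\infty$ to get $v(0,\cdot)\in L^{q_k\chi}$; (4) iterate; (5) interpolate. I expect step (2)–(3) — making the passage to the limit and the finiteness of the $L^{2^{*}_\sigma(2/\theta)}$-term airtight at each stage — to be the part requiring the most care, everything else being the standard Moser machinery.
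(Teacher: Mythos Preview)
Your proposal is correct and follows essentially the same approach as the paper: the paper also iterates Proposition~\ref{p1} with the choice $1+\beta_k=(\theta/2)^k$, i.e.\ exactly your ratio $\chi=\theta/2$, obtaining $v_+(0,\cdot)\in L^{2^{*}_\sigma(\theta/2)^k}(\mathbb{R}^N)$ at each step and then interpolating; your discussion of the truncation passage $T\to\infty$ and of the finiteness of the $L^2$-term on the right-hand side is in fact slightly more careful than the paper's. The only small omission is that the estimate of Proposition~\ref{p1} is for $v_+$, so you should mention (as the paper does in one line) that the same argument applies to $v_-$.
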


\noindent\begin{proof} Since $\frac{2N}{N-2\sigma}\frac{2}{\theta}\leq 2$ never occurs, we have $2<\frac{2^{*}_\sigma 2}{\theta}=\frac{2N}{N-2\sigma}\frac{2}{\theta}<2^{*}_\sigma$.

According to the Proposition \ref{p1}, we have
\begin{align}\label{bs1}
\left|v^{1+\beta}_+(0,y)\right|^2_{2^{*}_\sigma}&\leq \left[D_1\left|v^{1+\beta}_+(0,y)\right|^2_{2}+E_1\left|v^{1+\beta}_+(0,y)\right|^2_{2^{*}_\sigma(2/\theta)}\right],
\end{align}
where $D_1$ and $E_1$ are positive constants.

Choosing $\beta_1+1:=(\theta/2)>1$, it follows from \eqref{immersion1aa} that
\[\left|v^{1+\beta}_+(0,y)\right|^2_{2^{*}_\sigma(2/\theta)}=\left|v_+(0,y)\right|^{\theta}_{\frac{2N}{N-2\sigma}}<\infty,\]
from what follows that the right-hand side of \eqref{bs1} is finite. We conclude that $v^{1+\beta}_+(0,\cdot)\in {L^{\frac{2N}{N-2\sigma}}}(\mathbb{R}^{N})<\infty$. Now, we choose $\beta_2$ so that $\beta_2+1=(\theta/2)^2$ and conclude that
\[v_+(0,\cdot)\in L^{\frac{2N}{N-2\sigma}\frac{\theta^2}{2^2}}(\mathbb{R}^{N}).\]

After $k$ iterations we obtain that
\[v_+(0,\cdot)\in L^{\frac{2N}{N-1}\frac{\theta^k}{2^k}}(\mathbb{R}^{N}),\]
from what follows that $v_+(0,\cdot)\in L^p(\mathbb{R}^{N})$ for all $p\in [2,\infty)$. Since the same arguments are valid for $v_-$, we have $v(0,\cdot)\in L^p(\mathbb{R}^{N})$ for all $p\in [2,\infty)$.
$\hfill\Box$\end{proof}\vspace*{.2cm}

By simply adapting the proof given in \cite{ZelatiNolasco}, we present, for the convenience of the reader, the demonstration of our next result, which applies Moser's iteration technique:
\begin{proposition}
	Let $v\in H^1(\mathbb{R}^{N+1}_+,x^{1-2\sigma})$ be a weak solution of \eqref{P}. Then $v(0,\cdot)\in L^p(\mathbb{R}^{N})$ for all $p\in [2,\infty]$ and $v\in L^\infty(\mathbb{R}^{N+1}_+)$.
\end{proposition}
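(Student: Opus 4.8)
The plan is to combine the two results already in hand---Proposition \ref{p2}, which gives $v(0,\cdot)\in L^p(\mathbb{R}^N)$ for every finite $p$, and Proposition \ref{p1}, which provides the key reverse-H\"older type inequality---and feed them into a Moser iteration to reach the $L^\infty$ bound on the trace. Since Proposition \ref{p2} already settles the claim for $p\in[2,\infty)$, the only new content is the endpoint $p=\infty$ for $v(0,\cdot)$ together with the bound $v\in L^\infty(\mathbb{R}^{N+1}_+)$.

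First I would return to inequality \eqref{rhs5} of Proposition \ref{p1} and estimate the ``bad'' term $|v(0,y)v_T^\beta(0,y)|^2_{2^*_\sigma(2/\theta)}$ using the already-established integrability. Writing $q_0=2^*_\sigma$ and noting that the H\"older-type exponent $2^*_\sigma(2/\theta)$ satisfies $2<2^*_\sigma(2/\theta)<2^*_\sigma$, I would apply interpolation between $L^2$ and $L^{q}$ for a suitably large finite $q$ (legitimate by Proposition \ref{p2}) to dominate $|v_+^{1+\beta}(0,\cdot)|_{2^*_\sigma(2/\theta)}$ by a product of $|v_+^{1+\beta}(0,\cdot)|_2$ and a finite quantity controlled by $|v_+(0,\cdot)|_{q(1+\beta)}$; alternatively, since $|g|_{2N/[N(2-\theta)+2\sigma\theta]}$ is a fixed finite constant, absorb the term differently by choosing the splitting threshold $M$ in Proposition \ref{p1}. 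The upshot should be a clean recursive inequality of the form
\[
\bigl|v_+(0,\cdot)\bigr|_{q_0(1+\beta)}^{1+\beta}\ \le\ \bigl(C(1+\beta)\bigr)^{\frac{1}{1+\beta}}\ \bigl|v_+(0,\cdot)\bigr|_{2(1+\beta)}^{1+\beta},
\]
valid for all $\beta>0$, where $C$ absorbs $|V|_\infty$, the constants $C_1,C_{2^*_\sigma}$, and the fixed norm $|g|_{2N/[N(2-\theta)+2\sigma\theta]}$, and the dependence of $C_\beta$ and $M(\beta)$ on $\beta$ is at most polynomial.

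Next I would set up the iteration in the standard Moser fashion. Define exponents $q_{n+1}=\tfrac{q_0}{2}q_n$ with $q_0=2^*_\sigma/( \cdot)$ chosen so that $\chi:=q_0/2=\tfrac{N}{N-2\sigma}>1$, and $1+\beta_n=q_n/2$, so that the recursion reads $|v_+(0,\cdot)|_{q_{n+1}}\le (C q_n)^{1/q_n}|v_+(0,\cdot)|_{q_n}$. Iterating from $q_0=2^*_\sigma$ (where finiteness is guaranteed by \eqref{immersion1aa}) and taking logarithms, the prefactors multiply to $\exp\!\bigl(\sum_n \tfrac{\log(Cq_n)}{q_n}\bigr)$, which converges because $q_n$ grows geometrically; hence $\limsup_n |v_+(0,\cdot)|_{q_n}<\infty$, i.e. $v_+(0,\cdot)\in L^\infty(\mathbb{R}^N)$, with an explicit bound in terms of $\|v\|_\sigma$. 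Applying the same argument to $v_-$ (valid since $f(t)=0$ for $t<0$ was only used for positivity, not here---or simply repeat Proposition \ref{p1} with $v$ replaced by $-v$) gives $v(0,\cdot)\in L^\infty(\mathbb{R}^N)$. Finally, to pass from the trace to the full half-space, I would invoke the extension representation \eqref{transform}: since $\widehat{u}(x,\xi)=\widehat{h}(\xi)\Phi_\sigma(cx)$ with $h=v(0,\cdot)$, and $\|\Phi_\sigma\|_{L^\infty(0,\infty)}$ is finite by the asymptotics \eqref{asymp} (indeed $\Phi_\sigma(0)=1$ and $\Phi_\sigma\to0$), one can bound $|v(x,y)|$ by the Poisson-type kernel against $v(0,\cdot)$; the kernel is nonnegative with $L^1_y$ norm uniformly bounded in $x$ (it is a probability density up to normalization, as in \cite{Brandle,StingaTorrea1}), so $\|v(x,\cdot)\|_{L^\infty(\mathbb{R}^N)}\le \|v(0,\cdot)\|_{L^\infty(\mathbb{R}^N)}$ for every $x>0$, whence $v\in L^\infty(\mathbb{R}^{N+1}_+)$.

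The main obstacle I anticipate is the bookkeeping in the iteration step: one must verify that the $\beta$-dependence of the constant in Proposition \ref{p1}---which enters through $C_\beta=\max\{m^{-2},1+\beta/2\}$ and, more delicately, through the threshold $M=M(\beta)$ needed to make $\epsilon(M)C^2_{2^*_\sigma}C_\beta CC_1<1/2$---grows no faster than polynomially in $\beta$ (equivalently in $q_n$), so that the series $\sum_n q_n^{-1}\log(Cq_n)$ still converges. Since $\epsilon(M)\to 0$ as $M\to\infty$ with a rate governed by the fixed function $G\in L^{N/(2\sigma)}$, and $C_\beta$ is linear in $\beta$, the required $M(\beta)$ grows at most polynomially, so the convergence goes through; but making this quantitative is the one place where care is genuinely needed. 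A secondary point is ensuring the limiting argument $v(0,y)v_T^\beta(0,y)\to v_+^{1+\beta}(0,y)$ as $T\to\infty$ is accompanied by the requisite finiteness at each stage (guaranteed inductively), so that Fatou's lemma legitimizes passing to the limit in \eqref{rhs5} at every step of the recursion.
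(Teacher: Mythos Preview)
Your iteration has a genuine gap at exactly the point you flagged. You assert that the splitting threshold $M(\beta)$ in Proposition~\ref{p1} grows at most polynomially in $\beta$, but this is not true in general: the requirement is $\epsilon(M)\,C_{2^*_\sigma}^2 C_\beta CC_1<1/2$, and since $C_\beta$ is linear in $\beta$ you need $\epsilon(M)\lesssim 1/\beta$, where $\epsilon(M)=\bigl(\int_{\{G>M\}}G^{N/(2\sigma)}\bigr)^{2\sigma/N}$ depends only on the tail of a fixed function $G\in L^{N/(2\sigma)}$. Nothing prevents this tail from decaying, say, like $1/\log M$, which forces $M(\beta)$ to grow super-exponentially; since $M$ enters the recursive constant linearly (through $|V|_\infty+CC_1(2+M)$), the series $\sum_n q_n^{-1}\log(\text{constant}_n)$ then diverges and the Moser iteration collapses. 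So the ``bookkeeping'' here is not merely delicate---it can actually fail.

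The paper sidesteps this entirely by inserting one extra observation before iterating: once Proposition~\ref{p2} gives $v(0,\cdot)\in L^p(\mathbb{R}^N)$ for every finite $p$, the convolution $W*F(v(0,\cdot))$ becomes \emph{bounded}, so one can go back to \eqref{norm=r} (rather than \eqref{rhs5}) and replace $(C+g)$ by a constant. The only remaining ``bad'' coefficient is $|v(0,y)|^{\theta-2}$, and because of the unrestricted integrability of the trace one can put its large part $g_3$ into a high $L^p$ space independent of $\beta$. A Young-inequality splitting with parameter $\lambda\sim C_\beta$ then absorbs the $L^{2^*_\sigma}$ term, and the resulting constant grows like $C_\beta\cdot\lambda$, i.e.\ polynomially in $\beta$---precisely what you wanted but could not extract from Proposition~\ref{p1} directly. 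With that in hand the iteration $1+\beta_n=(2^*_\sigma/2)^n$ converges as you described.

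Your route from $v(0,\cdot)\in L^\infty$ to $v\in L^\infty(\mathbb{R}^{N+1}_+)$ via the Poisson representation is different from the paper's and is perfectly reasonable (indeed cleaner), provided you check that the kernel $P_x$ is nonnegative and that $\int_{\mathbb{R}^N}P_x(z)\,\dd z=\Phi_\sigma(mx)\le 1$; the paper instead bounds the weighted norms $|v|_{\sigma,2(1+\beta)}$ uniformly in $\beta$ and invokes the elementary lemma following the proposition.
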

\noindent\begin{proof} We recall equation \eqref{norm=r}:
\begin{align*}
\|vv^\beta_T\|^2_\sigma\leq& C_\beta\left[-\int_{\mathbb{R}^{N}}V(y)v^2(0,y)v^{2\beta}_T(0,y)\right.\nonumber\\
&\qquad+\left.\int_{\mathbb{R}^{N}}\left[W*F(v(0,y))\right]f(v(0,y))v(0,y)v^{2\beta}_T(0,y)\right.
\end{align*}
where $C_\beta=\max\{m^{-2},(1+\beta^2)\}$.

It follows that  $W*F(v(0,y))\in L^\infty(\mathbb{R}^{N})$, since $v(0,\cdot)\in L^p(\mathbb{R}^{N})$ for all $p\geq 2$, by Proposition \ref{p2}. We also know that $|f(t)|\leq C_1(|t|+|t|^{\theta-1})$ and $V$ is bounded. Therefore, if $C=\max\{|V|_\infty, C_1|W*F(v(0,y))|_\infty\}$, we have
\begin{align*}
\|vv^\beta_T\|^2_\sigma&\leq C_\beta C\left[\int_{\mathbb{R}^{N}}v^2(0,y)v^{2\beta}_T(0,y)\right.\\
&\qquad\left.+\int_{\mathbb{R}^{N}}\left(|v(0,y))|+|v(0,y)|^{\theta-1}\right)v(0,y)v^{2\beta}_T(0,y)\right]\\
&\leq C_\beta\left[2 C\int_{\mathbb{R}^{N}}v^2(0,y)v_T^{2\beta}(0,y)+ C\int_{\mathbb{R}^{N}}|v(0,y)|^{\theta-2}v^2(0,y)v^{2\beta}_T(0,y)\right].
\end{align*}
Since $|v(0,y)|^{\theta-2}=|v(0,y)|^{\theta-2}\chi_{\{|v(0,y)\leq 1\}}+|v(0,y)|^{\theta-2}\chi_{\{|v(0,y)> 1\}}$, it follows from Proposition \ref{p2} that \[|v(0,y)|^{\theta-2}\chi_{\{|v(0,y)> 1\}}=:g_3\in L^{2N}(\mathbb{R}^{N}),\]
what allow us to conclude that
\begin{align*}2 Cv^2(0,y)v_T^{2\beta}(0,y)+ C|v(0,y)|^{\theta-2}v^2(0,y)v_T^{2\beta}(0,y)\leq (C_3+g_3)v^2(0,y)v_T^{2\beta}(0,y)
\end{align*}
for a positive constant $C_3$ and a positive function $g_3\in L^{2N}(\mathbb{R}^{N})$ that depends neither on $T$ nor on $\beta$.

Therefore,
\begin{align*}
\|vv^\beta_T\|^2_\sigma&\leq \int_{\mathbb{R}^{N}}(C_3+g_3)v^2(0,y)v^{2\beta}_T(0,y).
\end{align*}
and, when $T\to\infty$, by applying Fatou's lemma and the Dominated Convergence Theorem  we obtain
\begin{align*}
\|v^{1+\beta}_+\|^2_\sigma&\leq C_\beta \int_{\mathbb{R}^{N}}(C_3+g_3)\left(v^{1+\beta}_+(0,y)\right)^2.
\end{align*}
Since
\begin{align*}
\int_{\mathbb{R}^{N}}g_3\left(v^{1+\beta}_+(0,y)\right)^2&\leq|g_3|_{2N}\,\left|v^{1+\beta}_+(0,\cdot)\right|_2\, \left|v^{1+\beta}_+(0,\cdot)\right|_{2^{*}_\sigma}\\
&\leq |g_3|_{2N}\left(\lambda\left|v^{1+\beta}_+(0,\cdot)\right|^2_2+\frac{1}{\lambda}\left|v^{1+\beta}_+(0,\cdot)\right|^2_{2^{*}_\sigma}\right),
\end{align*}
we conclude that
\begin{align}
\left|v^{1+\beta}_+(0,\cdot)\right|^2_{2^{*}_{\sigma}}&\leq C^2_{2^{*}_\sigma} \left\|v^{1+\beta}_+\right\|^2_\sigma\label{mest}\\
&\leq C^2_{2^{*}_\sigma}C_\beta\left(C_3+\lambda\,|g|_{2N}\right)\left|v^{1+\beta}_+(0,\cdot)\right|^2_2+\frac{C^2_{2^{*}_\sigma}C_\beta\,|g_3|_{2N}}{\lambda}\left|v^{1+\beta}_+(0,\cdot)\right|^2_{2^{*}_\sigma}\nonumber
\end{align}
and, by taking $\lambda>0$ so that
\[\frac{C^2_{2^{*}_\sigma}C_\beta\,|g_3|_{2N}}{\lambda}<\frac{1}{2},\]
we obtain
\begin{align}
\left|v^{1+\beta}_+(0,\cdot)\right|^2_{2^{*}_{\sigma}}&\leq C_\beta\left(2C^2_{2^{*}_\sigma}C_3+2C^2_{2^{*}_\sigma}\lambda\,|g_3|_{2N}\right)\left|v^{1+\beta}_+(0,\cdot)\right|^2_2\nonumber\\
&\leq C_4C_{\beta}\left|v^{1+\beta}_+(0,\cdot)\right|^2_2.\label{fest}
\end{align}
Since
\[C_4C_\beta=C_4(m^{-2}+1+\beta)\leq M^2e^{2\sqrt{1+\beta}}\]
for a positive constant $M$, it follows from \eqref{fest} that
\begin{align*}
\left|v^{1+\beta}_+(0,\cdot)\right|_{2^{*}_\sigma(1+\beta)}&\leq M^{1/(1+\beta)}e^{1/\sqrt{1+\beta}}|v_+(0,y)|_{2(1+\beta)}
\end{align*}
We now apply an iteration argument, taking $2(1+\beta_{n+1})=2^{*}_\sigma\beta_n$ and starting with $\beta_0=0$. This produces
\[\left|v^{1+\beta}_+(0,\cdot)\right|_{2^{*}_\sigma(1+\beta_n)}\leq M^{1/(1+\beta_n)}e^{1/\sqrt{1+\beta_n}}\left|v^{1+\beta}_+(0,\cdot)\right|_{2(1+\beta_n)}.\]
Because $(1+\beta_n)=\left(\frac{2^{*}_\sigma}{2}\right)^n=\left(\frac{N}{N-2\sigma}\right)^n$,
we have
\[\sum_{i=0}^\infty \frac{1}{1+\beta_n}<\infty\qquad\textrm{and}\qquad \sum_{i=0}^\infty\frac{1}{\sqrt{1+\beta_n}}<\infty.\]

Thus,
\[\left|v^{1+\beta}_+(0,\cdot)\right|_\infty=\lim_{n\to\infty}\left|v^{1+\beta}_+(0,\cdot)\right|_{2^{*}_\sigma(1+\beta_n)}<\infty,\]
from what follows $\left|v^{1+\beta}_+(0,\cdot)\right|_p<\infty$ for all $p\in [2,\infty]$. Since the same argument applies to $v_-(0,\cdot)$,we have that $v(0,\cdot)\in L^p(\mathbb{R}^{N})$ for all $p\in [2,\infty]$.

It follows from \eqref{mest} that
\begin{align*}\left\|v^{1+\beta}_+\right\|^2_\sigma
&\leq C_\beta\left(C_3+\lambda\,|g|_{2N}\right)\left|v^{1+\beta}_+(0,\cdot)\right|^2_2+\frac{C_\beta\,|g_3|_{2N}}{\lambda}\left|v^{1+\beta}_+(0,\cdot)\right|^2_{2^{*}_\sigma}
\end{align*}
By taking $\lambda=1$ and $\left|v^{1+\beta}_+(0,\cdot)\right|_p<C_5$ for all $p$, we obtain for any $\beta>0$,
\begin{align}\label{final0}
\left\|v^{1+\beta}_+\right\|^2_\sigma
&\leq C_\beta\left(C_3+|g|_{2N}\right)C^{2}_5+C_\beta\,|g_3|_{2N}C^{2}_5.
\end{align}
But \[|v_+|^{1+\beta}_{\sigma,2(1+\beta)}=|v_+^{1+\beta}|_{\sigma,2}\leq \|v^{1+\beta}_+\|_\sigma\] and for a positive constant $\tilde{c}$ results from \eqref{final0}  that
\[|v_+|^{2(1+\beta)}_{\sigma,2(1+\beta)}\leq \tilde{c}C_\beta C^{2(1+\beta)}_5.\]
Thus,
\[|v_+|_{\sigma,2(1+\beta)}\leq \tilde{c}^{1/2(1+\beta)}C_\beta^{1/2(1+\beta)}C_5 \]
and the right-hand side of the last inequality is uniformly bounded for all $\beta>0$. Taking into account the next result, we have $v\in L^\infty(\mathbb{R}^{N+1}_+)$. 

To conclude the proof of Theorem \ref{t2} it suffices to apply Theorems 2.3.12 and 2.3.15 in Fabes, Kenig and Serapioni \cite{Fabes}, see also \cite[Theorem 3.3]{CabreSire}.
$\hfill\Box$\end{proof}

\begin{lemma}
	Suppose that there exists a constant $C$ such that $|v|_{\sigma,q}\leq C$ for all $q\in [2,\infty)$. Then $v\in L^\infty(\mathbb{R}^{N+1}_+)$.
\end{lemma}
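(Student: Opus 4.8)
The plan is to prove this as a standard consequence of the uniform $L^q$-bound with respect to the weighted measure $x^{1-2\sigma}\,\mathrm{d}y\,\mathrm{d}x$. Observe that the weight $x^{1-2\sigma}$ is a Muckenhoupt $A_2$ weight on $\mathbb{R}^{N+1}$ (since $-1<1-2\sigma<1$), so the measure $\mathrm{d}\mu=x^{1-2\sigma}\,\mathrm{d}y\,\mathrm{d}x$ is a doubling, locally finite Radon measure on $\mathbb{R}^{N+1}_+$. The statement we want is purely measure-theoretic: if $\|v\|_{L^q(\mu)}\le C$ for every $q\in[2,\infty)$ with $C$ independent of $q$, then $v\in L^\infty(\mu)$, hence $v\in L^\infty(\mathbb{R}^{N+1}_+)$ in the usual sense (a.e.). So the proof does not really use the PDE at all beyond what has already been established.

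First I would argue by contradiction. Suppose $|v|>0$ is not essentially bounded; fix any level $\Lambda>0$ and set $A_\Lambda=\{z\in\mathbb{R}^{N+1}_+:|v(z)|>\Lambda\}$. If $\mu(A_\Lambda)=0$ for some finite $\Lambda$ we are done, so assume $\mu(A_\Lambda)>0$ for every $\Lambda$. Then for each $q\ge 2$,
\[
C^q\ \ge\ \int_{\mathbb{R}^{N+1}_+}|v|^q\,\mathrm{d}\mu\ \ge\ \int_{A_\Lambda}|v|^q\,\mathrm{d}\mu\ \ge\ \Lambda^q\,\mu(A_\Lambda),
\]
so $\mu(A_\Lambda)\le (C/\Lambda)^q$ for every $q\ge 2$. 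The point is that $C$ does not depend on $q$: choosing $\Lambda=2C$ gives $\mu(A_{2C})\le 2^{-q}$ for all $q\ge 2$, hence $\mu(A_{2C})=0$ by letting $q\to\infty$. Therefore $|v|\le 2C$ $\mu$-a.e., and since $\mu$ and Lebesgue measure on $\mathbb{R}^{N+1}_+$ are mutually absolutely continuous (the density $x^{1-2\sigma}$ is strictly positive and locally integrable on the open half-space), we conclude $|v|\le 2C$ a.e.\ in $\mathbb{R}^{N+1}_+$, i.e.\ $v\in L^\infty(\mathbb{R}^{N+1}_+)$.

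There is essentially no obstacle here; the only point requiring a word of care is the passage between $L^\infty(\mu)$ and $L^\infty$ in the Lebesgue sense, which is immediate because the Radon–Nikodym density $x^{1-2\sigma}$ vanishes only on the boundary $\{x=0\}$, a set of measure zero, so the two notions of ``essential supremum'' coincide on $\mathbb{R}^{N+1}_+$. Alternatively, and perhaps cleaner for the reader, one can avoid the measure-theoretic language altogether: from $|v|_{\sigma,q}\le C$ one gets, by the elementary inequality above applied on the set where $|v|>2C$, that this set has $\mu$-measure at most $2^{-q}$ for all $q$, hence is $\mu$-null; this is exactly the standard fact that $\lim_{q\to\infty}|w|_{q}=|w|_{\infty}$ whenever the $|w|_q$ stay bounded, transported to the weighted setting. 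The hard part, if any, is purely bookkeeping: making sure the constant $C$ in the hypothesis is genuinely $q$-independent, which is exactly what was arranged in the preceding Moser iteration via the convergent series $\sum 1/(1+\beta_n)$ and $\sum 1/\sqrt{1+\beta_n}$.
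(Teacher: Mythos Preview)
Your proof is correct and follows essentially the same Chebyshev-type argument as the paper: bound the weighted measure of a superlevel set $\{|v|>\Lambda\}$ by $(C/\Lambda)^q$ and let $q\to\infty$. The paper's version differs only cosmetically---it writes $|v|^p\ge k^{p-2}|v|^2$ on $\{v>k\}$ to control the weighted $L^2$-integral over the superlevel set rather than its $\mu$-measure directly---and your explicit remark on the mutual absolute continuity of $\mu$ and Lebesgue measure makes the passage to $L^\infty(\mathbb{R}^{N+1}_+)$ slightly cleaner than in the paper.
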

\begin{proof}
Let $A=\{z=(x,y)\in \mathbb{R}^{N+1}_+\,:\, v(z)>k\}$, where $k\in\mathbb{N}$. Of course we have
\[\iint_{\mathbb{R}^{N+1}_+}|v(z)|^px^{1-2\sigma}\geq \iint_A |v(z)|^px^{1-2\sigma}\geq k^{p-2}\iint_A |v(z)|^2x^{1-2\sigma}.\]
Therefore,
\[C^p\geq k^{p-2}\iint_A |v(z)|^2x^{1-2\sigma}\quad\Rightarrow\quad \left(\frac{C}{k}\right)^{p-2}\geq \frac{1}{C^2}\iint_A |v(z)|^2x^{1-2\sigma},\ \ \forall\ p\geq 2.\]
Taking $k>C$ and making $p\to \infty$, we conclude that
\[\iint_A |v(z)|^px^{1-2\sigma}=0\quad\Rightarrow\quad |A|=0\quad\Rightarrow\quad v\in L^\infty(\mathbb{R}^{N+1}_+),\]
where $|A|$ denotes de Lebesgue measure of $A$. We are done.
$\hfill\Box$\end{proof}\vspace{.2cm}

\begin{remark}In the appendix we prove a Hopf-type principle for our equation. Considering a classical Harnack principle (see Gilbarg-Trundinger), we conclude that the solution $v$ is positive in $\overline{\mathbb{R}^{N+1}_+}$.
	\end{remark}
\section{Proof of Theorem \ref{t3}}\label{decay}
Since $v$ satisfies
\[\left\{\begin{array}{ll}
\displaystyle\Delta_y u+\frac{1-2\sigma}{x}u_x+u_{xx}-m^2u=0 &\text{in }\ \mathbb{R}^{N+1}_+=(0,\infty)\times\mathbb{R}^N,\\
u(0,y)=h(y)\in L^2(\mathbb{R}^{N}), &y\in \mathbb{R}^{N}=\partial \mathbb{R}^{N+1}_+,
\end{array}\right.\]
by applying the Fourier transform we have
\[\hat{v}(x,\xi)=\hat{h}(\xi)\Phi_\sigma\left(\sqrt{m^2+4\pi^2|\xi|^2}\,x\right),\] 
where $\Phi_{\sigma}$ satisfies \eqref{asymp}. Denoting $\hat{\psi}=\Phi_{\sigma}$, it follows that
\[v(x,k)=\mathcal{F}^{-1}(\hat{h}\hat{\psi})=(h*\psi)(k).\]

Therefore, by Parseval,
\[|v(x,k)|\leq \left(\int_{\mathbb{R}^N}|h|^2\dd\xi\right)^{1/2}\left(\int_{\mathbb{R}^N}|\psi(\xi)|^2\dd\xi\right)^{1/2}=|h|_2\left(\int_{\mathbb{R}^N}|\psi(\xi)|^2\dd\xi\right)^{1/2}.\]
Since $\Phi_{\sigma}$ is continuous, for $x\geq 1$ it follows from \eqref{asymp} that
\[\left|\Phi_{\sigma}\left(\sqrt{m^2+4\pi^2|\xi|^2}\,x\right)\right|\leq C_1\left(\sqrt{m^2+4\pi^2|\xi|^2}\,x\right)^{\frac{2\sigma-1}{2}}e^{-x\sqrt{m^2+4\pi^2|\xi|^2}},\]
thus implying that
\begin{equation}\label{boundv}|v(x,k)|\leq |h|_2C_1\left(\int_{\mathbb{R}^N}\left(\sqrt{m^2+4\pi^2|\xi|^2}\,x\right)^{2\sigma-1}e^{-2x\sqrt{m^2+4\pi^2|\xi|^2}}\dd\xi\right)^{1/2}.\end{equation}

We now estimate the integral in the right-hand side of \eqref{boundv}.

\noindent \textbf{Case 1}: $0\leq\sigma\leq 1/2$.

Since $\sqrt{m^2+4\pi^2|\xi|^2}\,x\geq mx$ and $2\sigma-1\leq 0$, we have 
\[\left(\sqrt{m^2+4\pi^2|\xi|^2}\,x\right)^{2\sigma-1}\leq (mx)^{2\sigma-1},\]
thus yielding
\begin{multline}\label{boundv2}\int_{\mathbb{R}^N}\left(\sqrt{m^2+4\pi^2|\xi|^2}\,x\right)^{2\sigma-1}e^{-2x\sqrt{m^2+4\pi^2|\xi|^2}}\dd\xi\\
\leq (mx)^{2\sigma-1}\int_{\mathbb{R}^N}e^{-2x\sqrt{m^2+4\pi^2|\xi|^2}}\dd\xi.\end{multline}

We now consider the integral in the right-hand side of \eqref{boundv2}. 

Take $R>0$ such that $4\pi^2|\xi|^2\geq 3m^2$ for all $\xi\in B^c_R(0)=\mathbb{R}^N\setminus B_R(0)$. Then 
\begin{equation}\label{b1}
\sqrt{m^2+4\pi^2|\xi|^2}\,x\geq 2mx.
\end{equation}

Since, if $x\geq 1$ we always have 
\begin{equation}\label{b2}
\sqrt{m^2+4\pi^2|\xi|^2}\,x\geq 2x\pi|\xi|,
\end{equation} 
it follows from \eqref{b1} and \eqref{b2} that
\begin{equation}\label{b3}
-2x\sqrt{m^2+4\pi^2|\xi|^2}\leq -(2mx+2x\pi|\xi|)
\end{equation}
for all $\xi\in B^c_R(0)$.

Thus, 
\begin{align}\label{b4}\int_{\mathbb{R}^N}e^{-2x\sqrt{m^2+4\pi^2|\xi|^2}}\dd\xi&=\int_{B_R(0)}e^{-2x\sqrt{m^2+4\pi^2|\xi|^2}}\dd\xi+\int_{B^c_R(0)}e^{-2x\sqrt{m^2+4\pi^2|\xi|^2}}\dd\xi\nonumber\\
&\leq \int_{B_R(0)}e^{-2xm}\dd\xi+\int_{B^c_R(0)}e^{-(2mx+2\pi x|\xi|)}\dd\xi\nonumber\\
&\leq e^{-2mx}|B_R(0)|+e^{-2mx}\int_{B^c_R(0)}e^{-2\pi|\xi|}\dd\xi\nonumber\\
&\leq C_2e^{-2mx}.
\end{align}

Gathering \eqref{boundv},\eqref{boundv2} and \eqref{b4}, we conclude that 
\begin{align*}|u(x,k)|&\leq C_1|h|_2\left((mx)^{2\sigma-1}C_2e^{-2mx}\right)^{1/2}\\
&\leq C|h|_2x^{(2\sigma-1)/2}e^{-mx},
\end{align*}
concluding the proof of our claim in the case $0\leq\sigma\leq 1/2$.

\noindent \textbf{Case 2}: $1/2<\sigma<1$.
\begin{multline*}
\int_{\mathbb{R}^N}\left(\sqrt{m^2+4\pi^2|\xi|^2}\,x\right)^{2\sigma-1}e^{-2x\sqrt{m^2+4\pi^2|\xi|^2}}\dd\xi
\end{multline*}
\begin{align*}
&=x^{2\sigma-1}\int_{\mathbb{R}^N}\left(\sqrt{m^2+4\pi^2|\xi|^2}\right)^{2\sigma-1}e^{-2x\sqrt{m^2+4\pi^2|\xi|^2}}\dd\xi\\
&\leq x^{2\sigma-1}\int_{\mathbb{R}^N}\left(m^{2\sigma-1}+(2\pi|\xi|)^{2\sigma-1}\right)e^{-2x\sqrt{m^2+4\pi^2|\xi|^2}}\dd\xi\\
&=(mx)^{2\sigma-1}\int_{\mathbb{R}^N}e^{-2x\sqrt{m^2+4\pi^2|\xi|^2}}\dd\xi+x^{2\sigma-1}\int_{\mathbb{R}^N}(2\pi|\xi|)^{2\sigma-1}e^{-2x\sqrt{m^2+4\pi^2|\xi|^2}}\dd\xi
\end{align*}

We now estimate the second integral in the last inequality. By simply adapting the reasoning in Case 1, take $R>0$ such that $4\pi^2|\xi|^2\geq 3m^2$ for all $\xi\in B^c_R(0)$ and obtain inequality \eqref{b3}. Thus,
\begin{multline*}
\int_{\mathbb{R}^N}|\xi|^{2\sigma-1}e^{-2x\sqrt{m^2+4\pi^2|\xi|^2}}\dd\xi
\end{multline*}
\begin{align}\label{b5}
&=\int_{B_R(0)}|\xi|^{2\sigma-1}e^{-2x\sqrt{m^2+4\pi^2|\xi|^2}}\dd\xi+\int_{B^c_R(0)}|\xi|^{2\sigma-1}e^{-2x\sqrt{m^2+4\pi^2|\xi|^2}}\dd\xi\nonumber\\
&\leq e^{-2mx}R^{2\sigma-1}|B_R(0)|+e^{-2mx}\int_{\mathbb{R}^N}|\xi|^{2\sigma-1}e^{-2\pi|\xi|}\dd\xi\nonumber\\
&\leq C_3e^{-2mx}.
\end{align}
It follows easily the Claim also in the case $1/2<\sigma<1$.
$\hfill\Box$

\section{Appendix}
We start this section by stating a weak maximum principle. Its proof is standard, by taking a nonnegative test function $\varphi$ and then $\varphi=u^-=\max\{-u,0\}$.
\begin{lemma}[Weak Maximum Principle] If $u\in H^1(\mathbb{R}^{N+1}_+)$ satisfies
\[\left\{\begin{array}{rcll}
-\diver\left(x^{1-2\sigma}\nabla u\right)+m^2x^{1-2\sigma}u&\geq&0 &\text{in }\ B^+_R\\
\displaystyle\lim_{x\to 0^+} \left(-x^{1-2\sigma}\frac{\partial u}{\partial x}\right)&\geq&0 &\text{on }\ \Gamma^0_R\\
u&\geq& 0 &\text{on }\ \Gamma^+_R\end{array},\right.\]
then $u\geq 0$ in $B^+_R$.
\end{lemma}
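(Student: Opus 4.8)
The plan is to follow exactly the hint: test the weak form of the differential inequality with $\varphi=u^-:=\max\{-u,0\}$. First I would write out what it means for $u$ to satisfy the system weakly, namely that for every nonnegative $\varphi\in H^1(B^+_R,x^{1-2\sigma})$ whose trace vanishes on the spherical cap $\Gamma^+_R$ one has
\[
\iint_{B^+_R}x^{1-2\sigma}\left(\nabla u\cdot\nabla\varphi+m^2u\varphi\right)\dd x\,\dd y\;\geq\;\int_{\Gamma^0_R}\left[\lim_{x\to 0^+}\left(-x^{1-2\sigma}\frac{\partial u}{\partial x}\right)\right]\varphi(0,y)\,\dd y .
\]
This is the integrated version of the first line of the system against $\varphi$ after an integration by parts: the spherical boundary term drops out because $\varphi$ vanishes on $\Gamma^+_R$, while the conormal derivative produces the flat boundary term on $\Gamma^0_R$. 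By the second line of the system the bracket on the right is nonnegative, and $\varphi\geq 0$, so the right-hand side is $\geq 0$; hence the bilinear form on the left is nonnegative for every admissible nonnegative test function.

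Next I would verify that $\varphi=u^-$ is an admissible choice. Since $x^{1-2\sigma}$ is an $A_2$ Muckenhoupt weight (here $1-2\sigma\in(-1,1)$), the truncation $u^-$ lies in $H^1(B^+_R,x^{1-2\sigma})$ with $\nabla u^-=-\chi_{\{u<0\}}\nabla u$ (the chain rule for Sobolev functions, valid in this weighted setting within the Fabes--Kenig--Serapioni framework already used in the paper); moreover $u^-\geq 0$, and $u\geq 0$ on $\Gamma^+_R$ forces the trace of $u^-$ to vanish there. Plugging $\varphi=u^-$ into the inequality above and using the pointwise identities $\nabla u\cdot\nabla u^-=-|\nabla u^-|^2$ and $u\,u^-=-|u^-|^2$, the left-hand side becomes $-\iint_{B^+_R}x^{1-2\sigma}\left(|\nabla u^-|^2+m^2|u^-|^2\right)\dd x\,\dd y$, so that
\[
\iint_{B^+_R}x^{1-2\sigma}\left(|\nabla u^-|^2+m^2|u^-|^2\right)\dd x\,\dd y\;\leq\;0 .
\]
Because $x^{1-2\sigma}>0$ in $B^+_R$ and $m^2>0$, the integrand is nonnegative with nonpositive integral, whence $u^-\equiv 0$ a.e. in $B^+_R$, i.e. $u\geq 0$ there, which is the claim.

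The computation itself is a single line; the only points demanding care are the identification of the flat boundary term in the integration by parts and the assertion that $u^-$ belongs to the degenerate Sobolev space with the expected gradient and zero trace on $\Gamma^+_R$. Both follow from the theory of degenerate elliptic operators with $A_2$ weights (Fabes--Kenig--Serapioni \cite{Fabes}, see also Cabr\'e--Sire \cite{CabreSire}), and I would simply state the weak formulation precisely enough that these facts can be invoked without further comment.
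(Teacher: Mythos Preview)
Your proposal is correct and follows exactly the approach indicated in the paper, which simply states that the proof is standard ``by taking a nonnegative test function $\varphi$ and then $\varphi=u^-=\max\{-u,0\}$.'' You have fleshed out precisely this argument---writing the weak formulation, checking that $u^-$ is admissible, and obtaining the energy inequality forcing $u^-\equiv 0$---so there is nothing to add or correct.
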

The strong maximum principle is also valid, see \cite{Fabes}. 

We now prove a Hopf principle, adapting the proof given by X. Cabré and Y. Sire \cite[Proposition 4.11]{CabreSire}. We keep up with the notation introduced in that paper: $\Gamma^0_R$ denotes the ball of center $0$ and radius $R$ in $\mathbb{R}^N$.

\begin{proposition}[Hopf Principle] Consider the cylinder $C_{R,1}=\Gamma^0_R\times (0,1)\subset \mathbb{R}^{N+1}_+$. Suppose that $u\in C(\overline{C_{R,1}})\cap H^1(C_{R,1},x^{1-2\sigma})$ satisfy
\[\left\{\begin{array}{rcll}
\diver\left(x^{1-2\sigma}\nabla u\right)-m^2x^{1-2\sigma}u&\geq&0 &\text{in }\ C_{R,1}\\
u&>& 0 &\text{in }\ C_{R,1}\\
u(0,0)&=&0\end{array},\right.\]
Then, 
\[\lim_{x\to 0^+} \left(-x^{1-2\sigma}\frac{u(x,0)}{x}\right)\leq0.\]
In addition, if $x^{1-2\sigma}u_x\in C(\overline{C_{R,1}})$, then 
\[\lim_{x\to 0^+}\left(-x^{1-2\sigma}\frac{\partial u}{\partial x}(x,0)\right)<0.\]
\end{proposition}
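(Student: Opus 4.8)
The plan is to adapt the classical Hopf-lemma barrier argument to the degenerate-weighted setting, following Cabr\'e and Sire. First I would set up a comparison (sub)solution $w$ on a suitable subdomain of the cylinder $C_{R,1}$ where $u$ is known to be strictly positive on the relevant portion of the boundary away from the origin. The natural choice is an annular region, say $D = \{(x,y) : \rho < |(x,y)-(x_0,0)| < 2\rho\}$ contained in $C_{R,1}$ with the interior tangency point at $(0,0)$, and to look for a barrier of the form $w(x,y) = e^{-\alpha r^2} - e^{-\alpha(2\rho)^2}$ where $r$ denotes the distance from the center, with $\alpha$ large. The key computation is to check that for $\alpha$ large enough, $\operatorname{div}(x^{1-2\sigma}\nabla w) - m^2 x^{1-2\sigma} w \geq 0$ (i.e.\ $w$ is a subsolution for the same operator) inside $D$; here the weight $x^{1-2\sigma}$ contributes an extra first-order term $\frac{1-2\sigma}{x}\partial_x w$, which near $\{x=0\}$ is controlled because $\partial_x w$ vanishes to the appropriate order along the axis if the center $(x_0,0)$ is chosen on the boundary $\{x=0\}$ — this is exactly the point where one must be careful about the sign of $1-2\sigma$.

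Next I would invoke the Weak Maximum Principle stated just above: on $\partial D$, the inner sphere $r = 2\rho$ gives $w = 0 \le u$ (using $u>0$ there, or $u\ge 0$), while on the outer sphere $r=\rho$ one has $w$ bounded and $u$ bounded below by a positive constant by strict positivity and continuity, so after scaling $w$ by a small factor $\varepsilon>0$ we get $\varepsilon w \le u$ on all of $\partial D$; hence $\varepsilon w \le u$ in $D$. Since $u(0,0) = \varepsilon w(0,0) = 0$ and both touch at that point, comparing the incremental quotients along the inward normal (which, at the tangency point on $\{x=0\}$, is the $x$-direction) yields $\displaystyle -x^{1-2\sigma}\frac{u(x,0)}{x} \le -x^{1-2\sigma}\frac{\varepsilon w(x,0)}{x}$ for small $x>0$, and passing to the limit $x\to 0^+$ gives the first inequality, because the barrier $w$ is explicit and its weighted normal derivative at the origin can be computed directly to be $\le 0$ (in fact, one arranges it to be strictly negative). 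For the second, sharper conclusion, the additional hypothesis $x^{1-2\sigma}u_x \in C(\overline{C_{R,1}})$ lets us replace $u(x,0)/x$ by $\partial u/\partial x(x,0)$ in the limit and keep strict inequality, since the explicit barrier gives $\lim_{x\to 0^+}\left(-x^{1-2\sigma}\partial_x w(x,0)\right) < 0$.

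The main obstacle I anticipate is verifying the subsolution inequality for the barrier in the presence of the degenerate weight: one must show that the extra term $\frac{1-2\sigma}{x}\partial_x w$ does not spoil the sign of $\operatorname{div}(x^{1-2\sigma}\nabla w) - m^2 x^{1-2\sigma} w$ near $\{x=0\}$, and this forces a judicious placement of the ball's center exactly on the hyperplane $\{x=0\}$ so that $\partial_x w \sim x$ near the axis and the singular factor $1/x$ is absorbed. A secondary technical point is that the comparison and the maximum principle are being used on a bounded Lipschitz subdomain, so one should record that $u \in C(\overline{C_{R,1}}) \cap H^1(C_{R,1},x^{1-2\sigma})$ makes all the boundary evaluations and the weak maximum principle legitimate there; the regularity theory already invoked (Fabes--Kenig--Serapioni, and the strong maximum principle cited from \cite{Fabes}) supplies whatever interior smoothness of $u$ is needed to make the normal-derivative comparison rigorous.
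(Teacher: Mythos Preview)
Your plan diverges from the paper's (and from Cabr\'e--Sire, which the paper follows closely). The paper does not use an annular region or the classical exponential barrier $e^{-\alpha r^2}-\mathrm{const}$; it works on the sub-cylinder $C_{R/2,1/2}$ with the barrier
\[\omega_A(x,y)=x^{2\sigma}e^{Ax}\phi(y),\]
where $\phi$ is the first Dirichlet eigenfunction of $-\Delta_y$ on $\Gamma^0_{R/2}$. This $\omega_A$ vanishes identically on $\{x=0\}$ and on the lateral wall (where $\phi=0$), so the comparison $u\ge\varepsilon\omega_A$ on $\partial C_{R/2,1/2}$ reduces to the single top face $\{x=1/2\}$, where $u$ is continuous and strictly positive. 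A direct computation gives $L_a\omega_A=\bigl[A(2-a)+x(A^2-m^2-\lambda_1)\bigr]e^{Ax}\phi\ge0$ for $A$ large, with no restriction on the sign of $a=1-2\sigma$, and then $-x^{1-2\sigma}\omega_A(x,0)/x\to-\phi(0)<0$ yields the conclusion.

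Your annular construction has a genuine gap, and the proposal is internally inconsistent about the center of the ball. If the annulus is centered at $(x_0,0)$ with $x_0>0$ so that the outer sphere is tangent to $\{x=0\}$ at the origin (the geometry Hopf requires), then near $(0,0)$ one has $\partial_x w\to 2\alpha x_0\,e^{-\alpha x_0^2}>0$, and the singular term $\frac{1-2\sigma}{x}\partial_x w$ tends to $-\infty$ when $\sigma>1/2$; hence $w$ fails to be a subsolution in that range. Your remedy of placing the center on $\{x=0\}$ so that $\partial_x w\sim x$ is incompatible with that tangency: with center at the origin, $(0,0)$ is the center, not a boundary point of $D$; with center at some $(0,y_0)\ne(0,0)$, the annulus is symmetric about $\{x=0\}$ and its upper half $D\cap\{x>0\}$ acquires a flat boundary piece on $\{x=0\}$ where $w>0$ while you only know $u(0,0)=0$ and $u\ge0$, so the inequality $u\ge\varepsilon w$ cannot be verified there for any $\varepsilon>0$. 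The fix is exactly the factor $x^{2\sigma}$ in the paper's barrier, which both kills $\omega_A$ on all of $\{x=0\}$ and produces a nonzero weighted normal derivative at the origin.
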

\begin{proof} (Sketch) Let us denote $L_au=\diver(x^a\nabla u)-m^2x^au$ and $a=1-2\sigma$. We consider $\omega_A\colon C_{R,1}\to\mathbb{R}$ given by
\[\omega_A(x,y)=x^{-a}\left(xe^{Ax}\phi(y)\right)=x^{-a+1}e^{Ax}\phi(y),\]
where the constant $A$ will be chosen later on and $\phi=\phi(y)$ is the first positive eigenfunction of $-\Delta_y$ in $\Gamma^0_{R/2}$ with Dirichlet boundary conditions, with $|\phi|_\infty=1$ and $\lambda_1$ as the first eigenvalue.

Since
\begin{align*}
\diver(x^a\nabla \omega_a)-m^2x^a\omega_A&=x^a\nabla_y\omega_A+x^a\frac{\partial^2\omega_a}{\partial x^2}+ax^{a-1}\frac{\partial\omega_A}{\partial x}-m^2x^a\omega_A\\
&=\left\{-\lambda_1x+\left[2A(-a+1)+Aa\right]+A^2x-m^2x\right\}e^{Ax}\phi(y)\\
&=\left[A(2-a)+x\left(A^2-m^2-\lambda_1\right)\right]e^{Ax}\varphi(y),
\end{align*}
we have 
\[L_a\omega_A=\diver(x^a\nabla\omega_A)-m^2x^a\omega_A\geq 0\qquad\text{in }\ C_{R/2,1}\] 
by taking $A$ large enough.

Therefore, for $\epsilon>0$,
\[L_a(u-\omega_A)\leq 0\quad\text{in }\ C_{R/2,1}\]
and $u-\epsilon\omega_a=u\geq 0$ on $\partial \Gamma^0_{R/2}\times[0,1)$. 
Taking $\epsilon>0$ small enough, we have
\[u\geq \epsilon \omega_A\quad\text{on }\ \Gamma^0_{R/2}\times \{x=1/2\}.\]
since $u$ is continuous and positive on the closure of this set. Because $\omega_A=0$ on $\Gamma^0_R\times\{x=0\}$, we have
\begin{align*}
L_a(u-\epsilon \omega_A)&\leq 0 \qquad\text{in }\ C_{R/2,1/2}\\
u-\epsilon \omega_A&\geq 0 \qquad\text{on }\ \partial C_{R/2,1/2}.
\end{align*}
 
It follows then from the weak maximum principle that
\[u-\epsilon \omega_A\geq 0\quad\text{in }\ \overline{C_{R/2,1/2}}.\]
Thus, 
\[\lim_{x\to 0^+} \left(-x^{1-2\sigma}\frac{u(x,0)}{x}\right)\leq \epsilon\lim_{x\to 0^+} \left(-x^{1-2\sigma}\frac{w_A(x,0)}{x}\right)=-\epsilon\phi(0)<0,\]
as claimed. 

The last part of the proof follows literally that given in \cite{CabreSire}, simply changing the notation. ``If, additionally, $x^{1-2\sigma}u_x\in C(\overline{C_{R,1}})$, take $x_0\leq 1/2$. Since $(u-\epsilon \omega_A)(\cdot,0)\geq 0$ in $[0,x_0]$ and $(u-\epsilon \omega_A)(0,0)=0$, we have $(u_x-\epsilon (\omega_A)_x)(x_1,0)\geq 0$ for some $x_1\in (0,x_0)$. Repeating the argument for a sequence of $x_0$'s tending to $0$, we conclude that $-x^au_x\leq -\epsilon x^a(\omega_A)_x$ at a sequence of points $(x_j,0)$ with $x_j\downarrow 0$. Since we assume $x^au_x$ continuous up to $\{x=0\}$ and $-\epsilon(x^a(\omega_A)_x)(x_j,0)\to -\epsilon\phi(0)$, we conclude that 
\[\lim_{x\to 0^+}\left(-x^{1-2\sigma}\frac{\partial u}{\partial x}(x,0)\right)<0.\textrm{''}\]
We are done. 
$\hfill\Box$\end{proof}

\end{document}